\numberwithin{equation}{section}
\numberwithin{figure}{section}
  \theoremstyle{plain}
  \newtheorem*{thm*}{\protect\theoremname}
  \theoremstyle{plain}
  \newtheorem*{cor*}{\protect\corollaryname}
 \theoremstyle{definition}
 \newtheorem*{defn*}{\protect\definitionname}
\theoremstyle{plain}
\newtheorem{thm}{\protect\theoremname}
  \theoremstyle{definition}
  \newtheorem{example}[thm]{\protect\examplename}
  \theoremstyle{remark}
  \newtheorem*{rem*}{\protect\remarkname}
  \theoremstyle{remark}
  \newtheorem{rem}[thm]{\protect\remarkname}
  \theoremstyle{plain}
  \newtheorem{cor}[thm]{\protect\corollaryname}
  \providecommand{\corollaryname}{Corollary}
  \providecommand{\definitionname}{Definition}
  \providecommand{\examplename}{Example}
  \providecommand{\remarkname}{Remark}
  \providecommand{\theoremname}{Theorem}
\providecommand{\theoremname}{Theorem}
\begin{document}

\title[Toeplitz operators with non-harmonic symbol]{Hyponormal Toeplitz operators with non-harmonic Symbol acting on the Bergman space}

\author{Matthew Fleeman}
\address{M.~Fleeman: Department of Mathematics, Baylor University, One Bear Place \#97328,      
 Waco, TX  76798, USA}
\email{Matthew$\underline{\,\,\,}$Fleeman@baylor.edu}

\author{Constanze Liaw}
\address{C.~Liaw: Department of Mathematical Sciences, University of Delaware, 501 Ewing Hall, Newark, DE 19716, USA. And CASPER, Baylor University, One Bear Place \#97328,      
 Waco, TX  76798, USA}
\email{Liaw@udel.edu}

\thanks{
The work of C.~Liaw was supported by Simons Foundation Grant \#426258.}

\keywords{Toeplitz operator, Bergman space, hyponormality, non-harmonic symbol}
 \subjclass[2010]{47B35, 47B20}

\begin{abstract}
The Toeplitz operator acting on the Bergman space $A^{2}(\mathbb{D})$,
with symbol $\varphi$ is given by $T_{\varphi}f=P(\varphi f)$, where
$P$ is the projection from $L^{2}(\mathbb{D})$ onto the Bergman
space. 
We present some history on the study of
hyponormal Toeplitz operators acting on $A^{2}(\mathbb{D})$, as well
as give results for when $\varphi$ is a non-harmonic polynomial. We include a first investigation of Putnam's inequality for hyponormal operators with non-analytic symbols.
Particular attention is given to unusual hyponormality behavior that arises due to the extension of the class of allowed symbols. 
\end{abstract}

\maketitle

\section{Introduction}

Let $H$ be a complex Hilbert space and $T$ be a bounded linear operator
acting on $H$ with adjoint $T^{*}$. Operator $T$ is said to be
\emph{hyponormal} if $[T^{*},T]:=T^{*}T-TT^{*}\geq0$. That is, if
for all $u\in H$ 
\[
\left\langle [T^{*},T]u,u\right\rangle \geq0.
\]

The study of hyponormal operators is strongly related to the spectral
and perturbation theories of Hilbert space operators, singular integral
equations, and scattering theory. The interested reader is referred
to the monograph \cite{MartinPutinar} by M.~Martin and M.~Putinar.
One particularly interesting result for hyponormal operators, Putnam's
inequality, states that if $T$ is hyponormal, then 
\[
\Vert[T^{*},T]\Vert\leq\frac{\mathrm{Area}(\sigma(T))}{\pi},
\]
where $\sigma(T)$ denotes the spectrum of $T$ (cf.~\cite{AxlerShapiro}).

We study the hyponormality of certain operators acting on the Bergman
space 
\[
A^{2}(\mathbb{D})=\left\{ f\in\mathrm{Hol}(\mathbb{D}):\int_{\mathbb{D}}\left|f(z)\right|^{2}dA(z)<\infty\right\} .
\]
Let $\varphi\in L^{\infty}(\mathbb{D}).$ The \emph{Toeplitz operator}
$T_{\varphi}$ is given by 
\[
T_{\varphi}f=P(\varphi f)\qquad f\in A^{2}(\mathbb{D}),
\]
where $P$ is the orthogonal projection from $L^{2}(\mathbb{D})$
onto $A^{2}(\mathbb{D}).$

In the Hardy space setting the question of when $T_{\varphi}$ is
hyponormal for $\varphi\in L^{\infty}(\mathbb{T})$ was answered by
C.~Cowen in \cite{Cowen}, who proved the following theorem:
\begin{thm*}
Let $\varphi\in L^{\infty}(\mathbb{T})$ be given by $\varphi=f+\bar{g}$,
with $f,g\in H^{2}$. Then $T_{\varphi}$ is hyponormal if and only
if 
\[
g=c+T_{\bar{h}}f,
\]
for some constant $c$ and some $h\in H^{\infty}(\mathbb{D})$, with
$\left\Vert h\right\Vert _{\infty}\leq1$.
\end{thm*}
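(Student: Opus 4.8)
The plan is to recast hyponormality of \(T_\varphi\) as a comparison between two Hankel operators, and then to extract the analytic symbol \(h\) from a contraction that the comparison produces. Writing \(H_\psi f = (I-P)(\psi f)\) for the Hankel operator with symbol \(\psi\), I would first record the Hardy space identities \(T_\varphi^{*}T_\varphi = T_{|\varphi|^{2}} - H_\varphi^{*}H_\varphi\) and \(T_\varphi T_\varphi^{*} = T_{|\varphi|^{2}} - H_{\bar\varphi}^{*}H_{\bar\varphi}\), which follow from \(T_\varphi^{*}=T_{\bar\varphi}\) and the orthogonal splitting \(\psi f = T_\psi f + H_\psi f\). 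Subtracting gives
\[
[T_\varphi^{*},T_\varphi] = H_{\bar\varphi}^{*}H_{\bar\varphi} - H_\varphi^{*}H_\varphi .
\]
Because Hankel operators annihilate analytic symbols, the decomposition \(\varphi = f + \bar g\) yields \(H_\varphi = H_{\bar g}\) and \(H_{\bar\varphi}=H_{\bar f}\), so \(T_\varphi\) is hyponormal precisely when \(\|H_{\bar g}u\| \le \|H_{\bar f}u\|\) for all \(u \in H^{2}\).

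Next I would apply Douglas's factorization lemma to this norm inequality: it holds if and only if there is a contraction \(C\) on \((H^{2})^{\perp}\) with \(H_{\bar g} = C H_{\bar f}\) and \(\|C\|\le 1\). The whole content of the theorem is that \(C\) is forced to have a rigid form. To see this I would use the intertwining relation characteristic of Hankel operators, namely \(H_\psi S = W H_\psi\), where \(S = M_z\) is the forward shift on \(H^{2}\) and \(W\) is the backward shift on \((H^{2})^{\perp}\). Applying this to both \(H_{\bar g}\) and \(H_{\bar f}\) and using \(H_{\bar g}=CH_{\bar f}\) gives \(WCH_{\bar f} = CWH_{\bar f}\); since \(W\) leaves \(\overline{\operatorname{ran}}\,H_{\bar f}\) invariant, this shows that \(C\) commutes with \(W\) on \(\overline{\operatorname{ran}}\,H_{\bar f}\).

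The crux, and the step I expect to be the main obstacle, is converting this commutation into the statement that \(C\) acts on \(\operatorname{ran} H_{\bar f}\) as the compression to \((H^{2})^{\perp}\) of multiplication by a single \(h \in H^{\infty}\) with \(\|h\|_\infty = \|C\| \le 1\). After identifying \((H^{2})^{\perp}\) with \(H^{2}\) so that \(W\) becomes the shift, \(\overline{\operatorname{ran}}\,H_{\bar f}\) is a shift-invariant subspace, and a norm-one operator commuting with the shift there lifts---via the commutant lifting theorem---to such an \(h\); the delicate points are the non-uniqueness of \(C\) (it is pinned down only on \(\operatorname{ran} H_{\bar f}\)) and the care needed to check that the lifted symbol reproduces the original factorization through the conjugation in the identification. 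With \(h\) in hand, a direct computation gives \(H_{\bar f}T_h = CH_{\bar f} = H_{\bar g}\), hence \(H_{\overline{g - T_{\bar h}f}} = 0\); since a Hankel operator vanishes exactly when its symbol is analytic, \(g - T_{\bar h}f\) is anti-analytic, and being also an element of \(H^{2}\) it must reduce to a constant \(c\). This yields \(g = c + T_{\bar h}f\).

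For the converse I would simply reverse the last computation: substituting \(g = c + T_{\bar h}f\) with \(\|h\|_\infty \le 1\) shows directly that \(H_{\bar g} = H_{\bar f}T_h = C H_{\bar f}\), where \(C\) is the compression to \((H^{2})^{\perp}\) of multiplication by \(h\), so that \(\|C\| \le \|h\|_\infty \le 1\). The inequality \(\|H_{\bar g}u\| \le \|H_{\bar f}u\|\) is then immediate, and by the reduction in the first paragraph \(T_\varphi\) is hyponormal.
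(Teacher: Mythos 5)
Your proposal is correct and takes essentially the same route as the proof the paper attributes to Cowen: reduce the self-commutator to the Hankel comparison $\|H_{\bar{g}}u\|\le\|H_{\bar{f}}u\|$, apply Douglas factorization to obtain a contraction intertwined with the shift, and invoke Sarason's dilation/commutant lifting theorem after identifying $(H^2)^{\perp}$ with the conjugates of $H^2$ functions vanishing at the origin. The paper does not reprove the theorem but cites exactly these ingredients (Sarason's Theorem 1 and the structure of $(H^2)^{\perp}$), which your outline matches, including the correct flagging of the delicate lifting step.
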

This completely characterized hyponormal Toeplitz operators acting
on the Hardy space. Cowen's proof relies on a dilation theorem of D.~Sarason
\cite[Theorem 1]{Sarason}, and the fact that $\left(H^{2}\right)^{\perp}$
is just the conjugates of $H^{2}$ functions which vanish at the origin. 

In the Bergman space setting, where we lack an analog to Sarason's
dilation theorem, and where $\left(A^{2}\right)^{\perp}$ is a much
larger space, a similar characterization is lacking. One of the principle
difficulties in exploring questions of hyponormality originates from
the behavior of the self-commutator under operator addition. In particular,
if we let $u$ be in a complex Hilbert space $H$, and $T$ and $S$
be operators on $H$, then we find
\begin{align}
 & \quad\,\left\langle \left[(T+S)^{*},T+S\right]u,u\right\rangle \nonumber \\
 & =\left\langle Tu,Tu\right\rangle -\left\langle T^{*}u,T^{*}u\right\rangle +2\mathrm{Re}\left[\left\langle Tu,Su\right\rangle -\left\langle T^{*}u,S^{*}u\right\rangle \right]+\left\langle Su,Su\right\rangle -\left\langle S^{*}u,S^{*}u\right\rangle .\label{eq:crossterms}
\end{align}
As we shall see, the ``cross-terms'' $2\mathrm{Re}\left[\left\langle Tu,Su\right\rangle -\left\langle T^{*}u,S^{*}u\right\rangle \right]$
lead to many somewhat unexpected results which reveals a subtlety
in the study of hyponormal operators. The explicit expressions in
\eqref{eq:crossterms} lead to involved series computations. Our primary
effort consists of extracting reasonable necessary and/or sufficient
conditions from series corresponding to several different types of
non-harmonic symbols. It is worth noting that if both $T$ and $S$
are Toeplitz operators with harmonic symbols, then these cross terms
vanish, which leads to a smoother study of such operators, e.g. in
\cite{AhernCuckovic}, \cite{Hwang}, and \cite{Sadraoui}.

One of the central questions this paper explores is the following: 
\begin{center}
{\em Given a hyponormal Toeplitz operator $T_{\varphi}$ acting
on $A^{2}(\mathbb{D})$ and a symbol $\psi\in L^{\infty}(\mathbb{D})$,\\
 when is $T_{\varphi+\psi}$ hyponormal?} 
\par\end{center}

When $\psi$ is not harmonic, this question turns out to be particularly
elusive. As we shall see in Section \ref{sec:Non-Harmonic symbols},
even requiring that $T_{\psi}$ be self-adjoint is not enough to guarantee
the hyponormality of $T_{\varphi+\psi}$.

We are also interested in some spectral properties of hyponormal
$T_{\varphi}$, especially because the commutator has interesting interactions with the geometry of the image $\varphi(\mathbb{D})$.  It is an immediate consequence of Putnam's inequality and the spectral mapping theorem (cf.~\cite[p.~263]{Rudin}) that the norm of the commutator of $T_{\varphi}^{*}$ and  $T_{\varphi}$ is bounded above by $\mathrm{Area}(\varphi(\mathbb{D}))/\pi$ for \emph{analytic} $\varphi$, and in \cite{OlsenReguera} it was shown that this bound can be improved to $\mathrm{Area}(\varphi(\mathbb{D}))/(2\pi)$ for analytic and univalent $\varphi$. In \cite{FleemanKhavinson}, it was conjectured that the hypothesis ``univalent" is superfluous for this stronger bound. We extend this conjecture to non-analytic symbols.

The paper proceeds as follows: In Section \ref{sec:Harmonic Symbols},
we give an overview of some known results for the hyponormality results
of Toeplitz operators with harmonic symbols. This overview is by
no means exhaustive, but gives a flavor for the types of results in
this area to date. Of particular note is that questions of hyponormality
even of operators with harmonic polynomials as symbols have still
not been completely answered, as well as the elusiveness of both necessary
and sufficient conditions for hyponormality. In Section \ref{sec:Non-Harmonic symbols},
we focus on operators with symbols which are not harmonic. We give
several sufficient conditions for the hyponormality of certain operators
whose symbol is a non-harmonic polynomial, as well as several examples
which indicate that the situation is rather subtle. In Section
\ref{sec:Polynomials of Fixed Relative Degree}, we look at operators
whose symbols satisfy $\varphi(z)=a_{1}z^{m_{1}}\bar{z}^{n_{1}}+\ldots+a_{k}z^{m_{k}}\bar{z}^{n_{k}}$,
with $m_{1}-n_{1}=\ldots=m_{k}-n_{k}=\delta\geq0$.
Finally, in Section \ref{s-spectral}, we show that the norm of the commutator of $T_{\varphi}^{*}$ and  $T_{\varphi}$ is bounded by 1/2 for $\varphi(z)=z^{m}\bar{z}^{n}$ with $m>n$.

\noindent \textbf{Acknowledgement.} Many thanks to D.~Khavinson for
inspiring discussions, and to C.~Cowen for his very helpful correspondance
and encouragement.


\section{Toeplitz operators with harmonic symbol\label{sec:Harmonic Symbols}}

The study of hyponormal operators with harmonic symbols is greatly
simplified by the lack of cross-terms. In particular, if $\varphi=f+\bar{g}$
where $f$ and $g$ are holomorphic and bounded in $\mathbb{D}$ then
one may show that the cross-term $2\mathrm{Re}\left[\left\langle T_{f}u,T_{\bar{g}}u\right\rangle -\left\langle T_{\bar{f}}u,T_{g}u\right\rangle \right]$
vanishes. Thus, one can show the hyponormality of $T_{\varphi}$ by
showing that $\left\Vert H_{\bar{f}}u\right\Vert ^{2}\geq\left\Vert H_{\bar{g}}u\right\Vert ^{2}$
for all $u$ in the Bergman space, where $H_{\bar{\varphi}}$ is the
\emph{Hankel operator} $I-T_{\bar{\varphi}}$.

In \cite{Sadraoui}, H.~Sadraoui examined the hyponormality of Toeplitz
operators $T_{\varphi}$ acting on the Bergman space when $\varphi$
is harmonic. One of his first results, \cite[Prop.~1.4.3]{Sadraoui},
gave a necessary boundary condition for $f$ and $g$ whenever $f'$
is in the Hardy space. This result is particularly interesting because
in the Bergman space, boundary value results are so rare. 
\begin{thm*}
Let $f$ and $g$ be bounded analytic functions, such that $f'\in H^{2}$.
If $T_{f+\bar{g}}$ is hyponormal, then $g'\in H^{2}$ and $\left|g'\right|\leq\left|f'\right|$
almost everywhere on $\mathbb{T}$. 
\end{thm*}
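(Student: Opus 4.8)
The plan is to reduce hyponormality to a pointwise inequality on the boundary using the Hankel-operator criterion and a boundary-limit argument. Since $\varphi = f + \bar g$ is harmonic, the cross-terms vanish and hyponormality of $T_{\varphi}$ is equivalent to
\[
\|H_{\bar f} u\|^2 \geq \|H_{\bar g} u\|^2 \qquad \text{for all } u \in A^2(\mathbb{D}),
\]
where $H_{\bar\varphi} = I - T_{\bar\varphi}$ is the Hankel operator. The strategy is to feed a carefully chosen family of test functions $u$ into this inequality, functions that concentrate near a boundary point $\zeta \in \mathbb{T}$, and then to compute the limiting behavior of both sides. The natural candidates are normalized reproducing kernels or powers $u_N(z) = c_N z^N$ (or their Möbius-transported analogues), which as $N \to \infty$ push their mass toward the boundary and should convert the $L^2(\mathbb{D})$ inequality into a statement about boundary values of $f'$ and $g'$.

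**First I would** make precise how $\|H_{\bar f} u\|^2$ behaves under this family. The key computation is that for analytic $f$, the Hankel operator $H_{\bar f}$ can be expressed via the derivative: one has a formula relating $\|H_{\bar f} u\|^2$ to integrals involving $f'$ against $u$, exploiting the identity that the Bergman projection annihilates the anti-analytic part and a Littlewood--Paley type rewriting in terms of derivatives. The hypothesis $f' \in H^2$ is exactly what guarantees that $f'$ has nontangential boundary values in $L^2(\mathbb{T})$, so that as $u_N$ concentrates at $\zeta$, the quantity $\|H_{\bar f} u_N\|^2$ (suitably normalized) converges to something proportional to $|f'(\zeta)|^2$. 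The same analysis applied to the right-hand side first forces $g'$ to lie in $H^2$ as well — otherwise the right side would blow up while the left stays finite, contradicting the inequality — and then yields $|g'(\zeta)|^2 \leq |f'(\zeta)|^2$ for almost every $\zeta \in \mathbb{T}$.

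**The hard part will be** justifying the passage to the boundary rigorously: controlling the error terms in the asymptotic expansion of $\|H_{\bar f} u_N\|^2$ and showing that the limit genuinely isolates the boundary value $|f'(\zeta)|$ rather than some averaged quantity. This requires a uniform estimate that separates the main term from lower-order contributions, and it is where the assumption $f' \in H^2$ does the real work, both to make sense of the boundary integral and to dominate the tail. I would first establish the derivation of $g' \in H^2$ as a qualitative consequence (the inequality with $f'\in H^2$ forces finiteness of the relevant integral of $|g'|^2$), and only then extract the almost-everywhere pointwise domination by letting the concentration point $\zeta$ range over $\mathbb{T}$ and invoking a Lebesgue differentiation or Fatou-type argument to pass from integrated inequalities to the pointwise bound $|g'| \leq |f'|$ a.e. on $\mathbb{T}$.
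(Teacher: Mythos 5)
First, a point of order: the paper does not prove this statement. It appears in the survey Section \ref{sec:Harmonic Symbols} as a quotation of Sadraoui's thesis result \cite[Prop.~1.4.3]{Sadraoui}, so there is no in-paper proof to compare against; your attempt has to be judged against what such a proof actually requires. Your reduction of hyponormality to $\Vert H_{\bar f}u\Vert^{2}\geq\Vert H_{\bar g}u\Vert^{2}$ for all $u\in A^{2}(\mathbb{D})$ is correct (it is the identity the paper itself records for harmonic symbols), and your first step is sound: testing against the normalized monomials $u_{N}=\sqrt{(N+1)/\pi}\,z^{N}$ one computes $\lim_{N\to\infty}(N+1)^{2}\Vert H_{\bar f}u_{N}\Vert^{2}=\sum_{j\geq1}j^{2}|a_{j}|^{2}=\Vert f'\Vert_{H^{2}}^{2}$ when $f'\in H^{2}$, and Fatou's lemma on the $g$-side then forces $\sum_{j\geq1}j^{2}|b_{j}|^{2}\leq\Vert f'\Vert_{H^{2}}^{2}<\infty$, i.e.\ $g'\in H^{2}$ with $\Vert g'\Vert_{H^{2}}\leq\Vert f'\Vert_{H^{2}}$.

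The genuine gap is in the pointwise step, and it is not merely a matter of ``controlling error terms.'' Two concrete problems. (1) The monomials $c_{N}z^{N}$ do \emph{not} concentrate near a boundary point $\zeta$: $|z^{N}|$ is rotation invariant, so this family can only ever produce inequalities integrated over all of $\mathbb{T}$, never a local statement at $\zeta$. (2) The reproducing-kernel claim fails as stated: even for $f(z)=z$ one has $\Vert H_{\bar z}k_{w}\Vert^{2}=\frac{(1-|w|^{2})^{2}}{|w|^{4}}\bigl(\log\frac{1}{1-|w|^{2}}-|w|^{2}\bigr)$, which is of order $(1-|w|^{2})^{2}\log\frac{1}{1-|w|^{2}}$, not $(1-|w|^{2})^{2}$. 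The reason is structural: writing $f\circ\phi_{w}-f(w)=\sum_{n\geq1}c_{n}(w)z^{n}$, one has $c_{n}(w)\approx-f'(w)(1-|w|^{2})\bar w^{\,n-1}$, so \emph{every} Taylor mode contributes at the same order to $\Vert H_{\bar f}k_{w}\Vert^{2}=\sum_{n\geq1}|c_{n}(w)|^{2}/(n+1)$; no power-type normalization isolates $|f'(\zeta)|^{2}$, and the logarithmically normalized limit would require uniform control of the higher-derivative errors in all modes simultaneously --- exactly the part your proposal defers. The standard way to close the gap (and the route of the cited proof) is a different localization device compatible with exact constants: test the Hankel inequality on $c_{N}z^{N}h$ with $h\in H^{\infty}$ arbitrary, whose asymptotics yield $\int_{\mathbb{T}}\bigl(|f'|^{2}-|g'|^{2}\bigr)|h|^{2}\,dm\geq0$; then choose $h$ outer with $|h|^{2}=\epsilon+(1-\epsilon)\chi_{E}$, where $E=\{|g'|>|f'|\}$, and let $\epsilon\to0$ (legitimate since $|f'|^{2}-|g'|^{2}\in L^{1}(\mathbb{T})$ by the first step) to conclude $|E|=0$. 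A Lebesgue differentiation argument cannot substitute for this, because without the weighted-integral inequalities you never hold a local inequality to differentiate.
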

He also showed that this result is sharp, but not in general sufficient.
In particular, he proved the following theorem \cite[Prop.~1.4.4]{Sadraoui}
for harmonic polynomials. 
\begin{thm*}
Consider the operator $T_{z^{n}+\alpha\bar{z}^{m}}$.

1. If $m\leq n$, then $T_{z^{n}+\alpha\bar{z}^{m}}$ is hyponormal
if and only if $\left|\alpha\right|\leq\sqrt{\frac{m+1}{n+1}}$.

2. If $m\geq n,$ $T_{z^{n}+\alpha\bar{z}^{m}}$ is hyponormal if
and only if $\left|\alpha\right|\leq\frac{n}{m}$. 
\end{thm*}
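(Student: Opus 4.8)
The plan is to reduce hyponormality to a single scalar inequality per Fourier mode. Write the symbol as $\varphi=f+\bar g$ with $f(z)=z^{n}$ and $g(z)=\bar\alpha z^{m}$, so that $\bar g=\alpha\bar z^{m}$. By the harmonic--symbol criterion recalled above, $T_{\varphi}$ is hyponormal if and only if $\|H_{\bar f}u\|^{2}\ge\|H_{\bar g}u\|^{2}$ for every $u\in A^{2}(\mathbb{D})$, where $H_{\bar\psi}u=(I-P)(\bar\psi u)$. Since $H_{\bar\psi}$ is linear in the symbol, $H_{\bar f}=H_{\bar z^{n}}$ and $H_{\bar g}=\alpha H_{\bar z^{m}}$, so the criterion becomes
\[
|\alpha|^{2}\,\|H_{\bar z^{m}}u\|^{2}\le\|H_{\bar z^{n}}u\|^{2}\qquad\text{for all }u\in A^{2}(\mathbb{D}).
\]
(We assume $m,n\ge1$; the degenerate cases are immediate.)

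Next I would show that each $H_{\bar z^{p}}$ acts diagonally on the monomial basis. Splitting $L^{2}(\mathbb{D})=\bigoplus_{\ell\in\mathbb{Z}}\mathcal{H}_{\ell}$, where $\mathcal{H}_{\ell}$ is the closed span of $\{z^{a}\bar z^{b}:a-b=\ell\}$, the summands are mutually orthogonal because the angular integral $\int_{0}^{2\pi}e^{i\ell\theta}\,d\theta$ vanishes for $\ell\neq0$. As both $\bar z^{p}z^{k}$ and its Bergman projection lie in $\mathcal{H}_{k-p}$, the vectors $H_{\bar z^{p}}z^{k}$ lie in $\mathcal{H}_{k-p}$ and are therefore pairwise orthogonal for distinct $k$. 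Consequently $\|H_{\bar z^{p}}u\|^{2}=\sum_{k}|a_{k}|^{2}\,\|H_{\bar z^{p}}z^{k}\|^{2}$ for $u=\sum_{k}a_{k}z^{k}$, and the displayed inequality is equivalent to the family of scalar inequalities $|\alpha|^{2}\|H_{\bar z^{m}}z^{k}\|^{2}\le\|H_{\bar z^{n}}z^{k}\|^{2}$, one for each $k\ge0$.

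A direct computation then supplies the norms. Using $P(\bar z^{p}z^{k})=\frac{k-p+1}{k+1}z^{k-p}$ for $k\ge p$, the Pythagorean identity $\|H_{\bar z^{p}}z^{k}\|^{2}=\|\bar z^{p}z^{k}\|^{2}-\|P(\bar z^{p}z^{k})\|^{2}$, and $\|z^{j}\|^{2}=\pi/(j+1)$, the algebraic identity $(k+1)^{2}-(k-p+1)(k+p+1)=p^{2}$ yields
\[
\|H_{\bar z^{p}}z^{k}\|^{2}=\begin{cases}\dfrac{\pi\,p^{2}}{(k+p+1)(k+1)^{2}}, & k\ge p,\\ \dfrac{\pi}{k+p+1}, & k<p.\end{cases}
\]
Thus $T_{\varphi}$ is hyponormal exactly when $|\alpha|^{2}\le\inf_{k\ge0}R(k)$, where $R(k)=\|H_{\bar z^{n}}z^{k}\|^{2}/\|H_{\bar z^{m}}z^{k}\|^{2}$.

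It remains to locate this infimum, and this is where the main work lies. When $m\le n$ I expect the minimum to occur at $k=0$, where $R(0)=\frac{m+1}{n+1}$, giving $|\alpha|\le\sqrt{(m+1)/(n+1)}$. When $m\ge n$, for $k\ge m$ one has $R(k)=\frac{n^{2}}{m^{2}}\cdot\frac{k+m+1}{k+n+1}$, which strictly decreases to $\frac{n^{2}}{m^{2}}$ as $k\to\infty$; the infimum $\frac{n^{2}}{m^{2}}$ is therefore not attained, so the bound $|\alpha|\le n/m$ holds with the boundary value included. The genuine obstacle is that $\|H_{\bar z^{p}}z^{k}\|^{2}$ is piecewise in $k$, so to justify the location of the infimum one must compare $R(k)$ across all three ranges $k<\min(m,n)$, $\min(m,n)\le k<\max(m,n)$, and $k\ge\max(m,n)$, verifying $R(k)\ge\frac{m+1}{n+1}$ throughout in the first case and $R(k)>\frac{n^{2}}{m^{2}}$ throughout in the second. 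Each of these reduces, after clearing denominators, to an elementary polynomial inequality in $k,m,n$ that I would settle using $m\le n$ (respectively $m\ge n$).
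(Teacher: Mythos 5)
The paper does not prove this theorem: it is quoted, without proof, from Sadraoui's thesis as part of the survey in Section \ref{sec:Harmonic Symbols}, so there is no internal proof to compare yours against; I will judge it on its own merits, and it is correct. The reduction to $|\alpha|^{2}\|H_{\bar z^{m}}u\|^{2}\le\|H_{\bar z^{n}}u\|^{2}$ is exactly the Hankel-operator criterion the paper recalls at the start of Section \ref{sec:Harmonic Symbols}; for the ``only if'' half you should state explicitly why the criterion is an equivalence, namely that the vanishing of the cross-terms gives the exact identity $\langle[T_{\varphi}^{*},T_{\varphi}]u,u\rangle=\|H_{\bar f}u\|^{2}-\|H_{\bar g}u\|^{2}$, using $\|T_{f}u\|^{2}-\|T_{\bar f}u\|^{2}=\|H_{\bar f}u\|^{2}$ for holomorphic $f$. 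Your decomposition into the orthogonal spaces $\mathcal{H}_{\ell}$ and the resulting mode-by-mode scalar inequalities are the same device the paper itself uses to prove Theorem \ref{thm:Hyponormal monomials}, and your norm formula $\|H_{\bar z^{p}}z^{k}\|^{2}=\pi p^{2}/\bigl((k+p+1)(k+1)^{2}\bigr)$ for $k\ge p$, $\pi/(k+p+1)$ for $k<p$, is right. The one place you defer work --- locating $\inf_{k}R(k)$ across the three ranges --- does go through, and the checks are one-liners: for $m\le n$, one has $\frac{k+m+1}{k+n+1}\ge\frac{m+1}{n+1}$ for all $k\ge0$ (cross-multiplying reduces it to $kn\ge km$), which settles $k<m$; combined with $(k+1)^{2}\ge(m+1)^{2}\ge m^{2}$ it settles $m\le k<n$, and combined with $n^{2}\ge m^{2}$ it settles $k\ge n$. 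For $m\ge n$, on $k<n$ one has $R(k)=\frac{k+m+1}{k+n+1}\ge1\ge\frac{n^{2}}{m^{2}}$, and on $n\le k<m$ the required inequality $m^{2}(k+m+1)\ge(k+n+1)(k+1)^{2}$ holds because $(k+1)^{2}\le m^{2}$ and $k+n+1\le k+m+1$; the range $k\ge m$ you handled. Finally, your standing assumption $m,n\ge1$ is not a mere convenience: for $m=0$, part 1 as stated is false (adding a constant never destroys hyponormality, yet $\sqrt{1/(n+1)}$ is finite), and indeed your ratio $R(k)$ becomes $+\infty$ there; so the theorem tacitly assumes $m\ge1$ and your reading is the correct one.
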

This leads to a host of examples where $\left|g'\right|\leq\left|f'\right|$
on $\mathbb{T},$ but $T_{f+\bar{g}}$ is not hyponormal. In \cite[Theorem 4]{AhernCuckovic},
P.~Ahern and Z.~\v{C}u\v{c}kovi\'{c} showed the following result giving
another necessary, but not sufficient, condition for the hyponormality
of $T_{\varphi}$ when $\varphi$ is harmonic. 
\begin{thm*}
Suppose $f$ and $g$ are holomorphic in $\mathbb{D}$ and $\varphi=f+\bar{g}\in L^{\infty}(\mathbb{D})$.
If $T_{\varphi}$ is hyponormal then $Tu\geq u$ in $\mathbb{D}$
where $u=\left|f\right|^{2}-\left|g\right|^{2}$.
\end{thm*}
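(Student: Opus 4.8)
The plan is to feed the hyponormality inequality the family of normalized reproducing kernels $k_\lambda(z)=K(z,\lambda)/\sqrt{K(\lambda,\lambda)}$, $\lambda\in\mathbb D$, and to read off the claimed pointwise inequality. Since $|k_\lambda|^2\,dA$ is a probability measure on $\mathbb D$, the averaging operator $Tu(\lambda):=\langle uk_\lambda,k_\lambda\rangle=\int_{\mathbb D}u\,|k_\lambda|^2\,dA$ is exactly the Berezin transform appearing in the statement. Hyponormality of $T_\varphi$ gives, for every $\lambda$,
\[
0\le\langle[T_\varphi^*,T_\varphi]k_\lambda,k_\lambda\rangle=\|T_\varphi k_\lambda\|^2-\|T_\varphi^*k_\lambda\|^2,
\]
so everything reduces to computing these two norms. (Equivalently, one may start from the Hankel-operator reduction recorded above, since $\|T_\varphi k_\lambda\|^2-\|T_\varphi^*k_\lambda\|^2=\|H_{\bar f}k_\lambda\|^2-\|H_{\bar g}k_\lambda\|^2$.)

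The heart of the argument is the action of the Toeplitz operators on $k_\lambda$. Because $f$ is analytic, $P(fk_\lambda)=fk_\lambda$; and a short reproducing-kernel computation shows that $P(\bar f k_\lambda)=T_{\bar f}k_\lambda=\overline{f(\lambda)}\,k_\lambda$, with the analogous identities for $g$. Hence
\[
T_\varphi k_\lambda=\bigl(f+\overline{g(\lambda)}\bigr)k_\lambda,\qquad T_\varphi^*k_\lambda=T_{\bar f+g}k_\lambda=\bigl(g+\overline{f(\lambda)}\bigr)k_\lambda.
\]
Expanding the squared norms and using that the Berezin transform reproduces analytic symbols, $\langle fk_\lambda,k_\lambda\rangle=f(\lambda)$, yields
\[
\|T_\varphi k_\lambda\|^2=T(|f|^2)(\lambda)+2\mathrm{Re}\bigl(f(\lambda)g(\lambda)\bigr)+|g(\lambda)|^2
\]
and the same expression for $\|T_\varphi^*k_\lambda\|^2$ with the roles of $f$ and $g$ interchanged in the outer terms.

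Subtracting, the cross terms $2\mathrm{Re}(f(\lambda)g(\lambda))$ cancel identically --- this is the decisive simplification --- and linearity of the Berezin transform gives
\[
\langle[T_\varphi^*,T_\varphi]k_\lambda,k_\lambda\rangle=T\bigl(|f|^2-|g|^2\bigr)(\lambda)-\bigl(|f(\lambda)|^2-|g(\lambda)|^2\bigr)=Tu(\lambda)-u(\lambda).
\]
As the left-hand side is nonnegative for each $\lambda\in\mathbb D$, we obtain $Tu\ge u$ throughout $\mathbb D$. The step I expect to require the most care is not the algebra but the justification of the kernel identity $T_{\bar f}k_\lambda=\overline{f(\lambda)}k_\lambda$ together with the attendant integrability: one must ensure $|f|^2$ and $|g|^2$ are integrable against $|k_\lambda|^2\,dA$, so that the Berezin transforms above are finite and the splitting $\varphi k_\lambda=fk_\lambda+\bar g k_\lambda$ into separately square-integrable pieces is legitimate. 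This is precisely where the analyticity of $f,g$ and the boundedness of $\varphi$ enter.
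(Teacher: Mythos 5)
Your proof is correct: testing the self-commutator on the normalized reproducing kernels $k_\lambda$, using $P(fk_\lambda)=fk_\lambda$ and $P(\bar{f}k_\lambda)=\overline{f(\lambda)}k_\lambda$, and observing that the cross terms $2\mathrm{Re}\bigl(f(\lambda)g(\lambda)\bigr)$ cancel in the subtraction is precisely the standard argument, and the integrability points you flag are settled since $f,g\in A^{2}(\mathbb{D})$ (for instance because $f=P\varphi-\overline{g(0)}$ and $P$ is bounded on $L^{2}$) while $k_\lambda$ is a bounded function for each fixed $\lambda$. Note that the paper itself gives no proof of this statement --- it is quoted as Theorem 4 of P.~Ahern and Z.~\v{C}u\v{c}kovi\'{c} \cite{AhernCuckovic} --- and your Berezin-transform argument is essentially the original proof from that reference.
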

Using this, they were able to show, as a corollary, a more general
version of Sadraoui's result. 
\begin{cor*}
Suppose $f$ and $g$ are holomorphic in $\mathbb{D}$, that $\varphi=f+\bar{g}$
is bounded in $\mathbb{D}$, and that $T_{\varphi}$ is hyponormal.
Then $\overline{\lim}_{z\rightarrow\zeta}\left(\left|f'(z)\right|^{2}-\left|g'(z)\right|^{2}\right)\geq0$
for all $\zeta\in\mathbb{T}$. In particular, if $f'$ and $g'$ are
continuous at $\zeta\in\mathbb{T}$, then $\left|f'(\zeta)\right|\geq\left|g'(\zeta)\right|$. 
\end{cor*}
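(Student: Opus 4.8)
The plan is to unwind the preceding theorem into a statement about the Laplacian of $u=|f|^2-|g|^2$, and then read the desired boundary inequality off the positivity of the Berezin transform. First I would record the algebraic identity $\Delta u = 4\bigl(|f'|^2-|g'|^2\bigr)$, which follows from $\Delta=4\partial\bar\partial$ together with $\partial\bar\partial|f|^2=|f'|^2$; this is precisely what converts the sought derivative condition $|f'|\ge|g'|$ into a statement about the sign of $\Delta u$. Next I would use that the operator $T$ in the theorem is the Berezin transform $B$, $Bu(z)=\int_{\mathbb D}u(w)\,|k_z(w)|^2\,dA(w)$ with $k_z$ the normalized reproducing kernel (this is how the necessary condition arises: testing $\|H_{\bar f}k_z\|\ge\|H_{\bar g}k_z\|$ and using $T_{\bar f}k_z=\overline{f(z)}\,k_z$ gives $B(|f|^2)(z)-|f(z)|^2\ge B(|g|^2)(z)-|g(z)|^2$, i.e. $Bu\ge u$). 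So the hypothesis reads $Bu(z)\ge u(z)$ for all $z\in\mathbb D$.

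The conceptual core is that $B$ is M\"obius invariant and reproduces the sub-mean-value inequality. Writing $\varphi_z(w)=(z-w)/(1-\bar z w)$, one has $Bu(z)=\int_{\mathbb D}(u\circ\varphi_z)\,dA$, and since precomposition by the conformal map $\varphi_z$ preserves the sign of the Laplacian, a Green's-function computation on the disk yields a representation
\[
Bu(z)-u(z)=\int_{\mathbb D}N\bigl(\varphi_z(w)\bigr)\,\Delta u(w)\,dA(w),
\]
where $N\ge0$ is a fixed, radial, boundary-vanishing weight with an integrable logarithmic singularity at the origin. Thus $Bu-u\ge0$ says exactly that a positive average of $\Delta u=4\bigl(|f'|^2-|g'|^2\bigr)$ is nonnegative at every interior point, which is the interior avatar of the boundary statement we want.

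I would then argue by contradiction. If $\overline{\lim}_{z\to\zeta}\bigl(|f'(z)|^2-|g'(z)|^2\bigr)<0$ for some $\zeta\in\mathbb T$, there are $\varepsilon,\delta>0$ with $\Delta u\le-\varepsilon<0$ on $D(\zeta,\delta)\cap\mathbb D$. As $z\to\zeta$ the weight $N(\varphi_z(\cdot))$ peaks at $\varphi_z^{-1}(0)=z$ and so concentrates pseudohyperbolically near $z$, hence Euclideanly near $\zeta$; the part of the integral coming from $D(\zeta,\delta)$ is therefore strictly negative and of a definite size, and combining this with the decay of the complementary contribution would force $Bu(z)-u(z)<0$ for $z$ sufficiently close to $\zeta$, contradicting the theorem. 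The final ``in particular'' clause is then immediate: continuity of $f'$ and $g'$ at $\zeta$ upgrades the $\overline{\lim}$ to the value $|f'(\zeta)|^2-|g'(\zeta)|^2$, which must therefore be $\ge0$.

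The hard part will be making this concentration quantitative. The M\"obius change of variables spreads the averaging measure over the \emph{entire} disk---the displacement $\varphi_z(w)-z$ is of size $O(1)$ rather than small---so the naive local expansion of $B$ in powers of $\Delta u$ diverges and a Taylor argument is unavailable. One must instead show that the far contribution $\int_{\mathbb D\setminus D(\zeta,\delta)}N(\varphi_z(w))\,\Delta u(w)\,dA(w)$ is negligible compared with the near, strictly negative piece as $z\to\zeta$. This calls for an a priori control on $|f'|^2-|g'|^2$ away from $\zeta$ (to tame the possibly unbounded factor $\Delta u$ where the geometry does not help) together with the sharp boundary decay of $N(\varphi_z(w))$ for $w$ bounded away from $\zeta$, via estimates of the familiar type $\int_{\mathbb D}(1-|w|^2)^{a}|1-\bar z w|^{-b}\,dA(w)$ as $|z|\to1$.
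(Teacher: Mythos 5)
A preliminary remark on provenance: the paper itself does not prove this corollary --- it quotes it from \cite{AhernCuckovic}, immediately after quoting their theorem that hyponormality of $T_{f+\bar g}$ forces $Bu\ge u$ in $\mathbb{D}$ for $u=|f|^{2}-|g|^{2}$, where $B$ is the Berezin transform (the operator called $T$ in that theorem). So there is no in-paper proof to compare against, but your reduction is exactly the route the paper indicates: hyponormality is equivalent to $\Vert H_{\bar f}u\Vert\ge\Vert H_{\bar g}u\Vert$ for all $u\in A^{2}(\mathbb{D})$ (the cross-terms vanish for harmonic symbols), testing on normalized reproducing kernels $k_{z}$ gives $Bu\ge u$, and your identities $\Delta u=4\bigl(|f'|^{2}-|g'|^{2}\bigr)$ and the M\"obius--Green representation $Bu(z)-u(z)=\int_{\mathbb{D}}N(\varphi_{z}(\xi))\,\Delta u(\xi)\,dA(\xi)$ with $N\ge0$ radial, boundary-vanishing, logarithmically singular at $0$, are all correct.

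The genuine gap is the step you yourself defer as ``the hard part,'' and it is not mere bookkeeping: the heuristic you give for it is quantitatively wrong, and the naive version of your plan does not close. If $|f'|^{2}-|g'|^{2}\le-\varepsilon$ on $D(\zeta,\delta)\cap\mathbb{D}$, the near contribution is \emph{not} ``of a definite size'': using $N(w)\asymp(1-|w|^{2})^{2}$ away from $w=0$ and $1-|\varphi_{z}(\xi)|^{2}=(1-|z|^{2})(1-|\xi|^{2})/|1-\bar z\xi|^{2}$, it is $\approx-\varepsilon\,(1-|z|^{2})^{2}\log\frac{\delta}{1-|z|}$, while the far contribution --- after invoking the gradient bound $|f'(\xi)|,|g'(\xi)|\le C/(1-|\xi|)$, which must be derived from boundedness of the \emph{harmonic} function $\varphi$ since $f$ and $g$ need not be separately bounded --- is $O_{\delta}\bigl((1-|z|^{2})^{2}\bigr)$. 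Both pieces vanish at the same polynomial rate; the contradiction with $Bu\ge u$ comes only from the logarithmic factor, i.e.\ from the sharp mass computation $\int_{D(\zeta,\delta)\cap\mathbb{D}}N(\varphi_{z}(\xi))\,dA(\xi)\asymp(1-|z|^{2})^{2}\log\frac{\delta}{1-|z|}$, which rests on the estimate $\int_{\mathbb{D}}(1-|\xi|^{2})^{2}|1-\bar z\xi|^{-4}\,dA(\xi)\asymp\log\frac{1}{1-|z|^{2}}$ together with $N(w)\ge\frac18(1-|w|^{2})^{2}$ on all of $\mathbb{D}$. By contrast, the pseudohyperbolic concentration you invoke (the kernel peaking on a pseudohyperbolic disk about $z$) only yields a near piece $\gtrsim(1-|z|^{2})^{2}$ with an absolute constant, which can never beat the far bound $C\delta^{-2}(1-|z|^{2})^{2}$; so without extracting the logarithm the argument fails outright rather than being merely unfinished. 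Once that factor is in hand, taking $z=r\zeta$, $r\to1$, gives $Bu(z)-u(z)<0$, the desired contradiction, and the ``in particular'' clause follows as you state.
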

Finally, in \cite{Hwang}, I.S.~Hwang proved the following theorem as part
of his study of hyponormal operators whose symbol is a harmonic polynomial.
We note here that the condition deals only with the modulus of the
coefficients of the given harmonic polynomial. 
\begin{thm*}
Let $f(z)=a_{m}z^{m}+a_{n}z^{n}$ and $g(z)=a_{-m}z^{m}+a_{-n}z^{n}$,
with $0<m<n$. If $T_{f+\bar{g}}$ is hyponormal and $\left|a_{n}\right|\leq\left|a_{-n}\right|$,
then we have 
\[
n^{2}\left(\left|a_{-n}\right|^{2}-\left|a_{n}\right|^{2}\right)\leq m^{2}\left(\left|a_{m}\right|^{2}-\left|a_{-m}\right|^{2}\right).
\]
\end{thm*}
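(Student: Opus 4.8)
The plan is to use the reduction noted at the start of this section: for a harmonic symbol $\varphi=f+\bar g$ the self-commutator splits with no cross-terms as $[T_\varphi^{*},T_\varphi]=H_{\bar f}^{*}H_{\bar f}-H_{\bar g}^{*}H_{\bar g}$, so that hyponormality is equivalent to $\Vert H_{\bar f}u\Vert^{2}\geq\Vert H_{\bar g}u\Vert^{2}$ for every $u\in A^{2}(\mathbb{D})$. Rather than analyze this for all $u$, I would extract only the necessary condition coming from the diagonal: testing against the monomials $u=z^{k}$ forces $\Vert H_{\bar f}z^{k}\Vert^{2}\geq\Vert H_{\bar g}z^{k}\Vert^{2}$ for every $k$. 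Since the asserted inequality involves only the moduli of the coefficients, the diagonal entries, being insensitive to the phases of $a_{\pm m},a_{\pm n}$, are precisely the right quantities to compute.

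The main computation is to evaluate $\Vert H_{\bar f}z^{k}\Vert^{2}$ in closed form. Writing $H_{\bar f}u=(I-P)(\bar f u)=\bar f u-T_{\bar f}u$ and using the elementary identities $\Vert z^{k}\Vert^{2}=\pi/(k+1)$ and $T_{\bar z^{j}}z^{k}=\frac{k-j+1}{k+1}z^{k-j}$ for $k\geq j$, one gets $H_{\bar f}z^{k}=\bar a_{m}u_{m}+\bar a_{n}u_{n}$, where $u_{j}:=\bar z^{j}z^{k}-\frac{k-j+1}{k+1}z^{k-j}\in(A^{2}(\mathbb{D}))^{\perp}$. The key simplification is that, because $m\neq n$, every mixed term $\langle u_{m},u_{n}\rangle$ vanishes by the orthogonality of distinct powers under area measure; hence no cross-terms survive and $\Vert H_{\bar f}z^{k}\Vert^{2}=|a_{m}|^{2}\Vert u_{m}\Vert^{2}+|a_{n}|^{2}\Vert u_{n}\Vert^{2}$. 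A short computation, using $(k-j+1)(k+j+1)=(k+1)^{2}-j^{2}$, collapses the remainder norms to the clean form $\Vert u_{j}\Vert^{2}=\pi\,j^{2}/\big((k+j+1)(k+1)^{2}\big)$. Carrying out the identical computation for $g$ yields the diagonal entry of the self-commutator as
\[
\langle[T_\varphi^{*},T_\varphi]z^{k},z^{k}\rangle=\frac{\pi}{(k+1)^{2}}\left[\frac{m^{2}(|a_{m}|^{2}-|a_{-m}|^{2})}{k+m+1}+\frac{n^{2}(|a_{n}|^{2}-|a_{-n}|^{2})}{k+n+1}\right],
\]
which hyponormality requires to be nonnegative for all $k$.

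It then remains to extract the asymptotically sharp consequence. Clearing the positive prefactor and the two denominators, the resulting expression is affine in $k$, so its nonnegativity for all large $k$ forces the leading coefficient to be nonnegative, namely $m^{2}(|a_{m}|^{2}-|a_{-m}|^{2})+n^{2}(|a_{n}|^{2}-|a_{-n}|^{2})\geq0$, which is exactly the claim after rearrangement. The hypothesis $|a_{n}|\leq|a_{-n}|$ guarantees that the left-hand side $n^{2}(|a_{-n}|^{2}-|a_{n}|^{2})$ is nonnegative, so the conclusion is a genuine lower bound on $m^{2}(|a_{m}|^{2}-|a_{-m}|^{2})$. I expect the main obstacle to be computational and asymptotic rather than conceptual: one must check that the mixed $\langle u_{m},u_{n}\rangle$ terms truly drop out, so that phases never enter, and one must pass to the limit $k\to\infty$, since any fixed $k$ gives only a weaker, $k$-dependent bound and it is the leading-order behavior that produces the sharp coefficients $m^{2}$ and $n^{2}$.
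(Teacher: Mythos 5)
Your proof is correct, but there is nothing in the paper to compare it against: this statement is Hwang's theorem, quoted in Section~\ref{sec:Harmonic Symbols} purely as background (with the proof residing in \cite{Hwang}), so your argument stands as an independent, self-contained derivation. It is also fully consistent with the machinery the paper does set up: the cross-term-free identity $\langle[T_{f+\bar{g}}^{*},T_{f+\bar{g}}]u,u\rangle=\Vert H_{\bar{f}}u\Vert^{2}-\Vert H_{\bar{g}}u\Vert^{2}$ is exactly the reduction described at the start of Section~\ref{sec:Harmonic Symbols}, and the projection formula $P(z^{k}\bar{z}^{j})=\frac{k-j+1}{k+1}z^{k-j}$ for $k\geq j$ is the same one used in the proof of Theorem~\ref{thm:Hyponormal monomials}. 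I verified the computational core: the remainders $u_{j}=\bar{z}^{j}z^{k}-\frac{k-j+1}{k+1}z^{k-j}$ satisfy $\langle u_{m},u_{n}\rangle=0$ because all four constituent inner products carry the angular factor $e^{i(n-m)\theta}$ (equivalently, $u_{m}$ and $u_{n}$ have distinct angular degrees $k-m\neq k-n$), and indeed $\Vert u_{j}\Vert^{2}=\pi j^{2}/\bigl((k+j+1)(k+1)^{2}\bigr)$, so your diagonal entry is exact; multiplying the resulting necessary condition by $k$ and letting $k\to\infty$ forces $m^{2}(|a_{m}|^{2}-|a_{-m}|^{2})+n^{2}(|a_{n}|^{2}-|a_{-n}|^{2})\geq0$, which is the claim. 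Two minor remarks. First, your closed form for the diagonal entry is valid only for $k\geq n$ (for $k<n$ the projection of $\bar{z}^{n}z^{k}$ vanishes and the formula changes); this is harmless since only large $k$ is used, but it should be said. Second, your argument never invokes the hypothesis $\left|a_{n}\right|\leq\left|a_{-n}\right|$: you prove the formally stronger assertion that the signed inequality holds for every hyponormal $T_{f+\bar{g}}$ of this form, the hypothesis serving only to make the left-hand side nonnegative so that the conclusion reads as a genuine lower bound. That is a feature, not a gap, and it is in the same spirit as the diagonal-plus-asymptotics analysis the paper itself performs for non-harmonic symbols.
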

Work continues to this day on the study of hyponormal Toeplitz operators
whose symbol is a harmonic polynomial. It is a testament to the subtlety
of the topic that even in this case there is still much to be said
about such symbols. Recently, in \cite{CuckovicCurto}, Z.~\v{C}u\v{c}kovi\'{c}
and R.~Curto proved the following result.
\begin{thm*}
Suppose $T_{\varphi}$ is hyponormal on $A^{2}(\mathbb{D})$ with
$\varphi(z)=\alpha z^{m}+\beta z^{n}+\gamma\bar{z}^{p}+\delta\bar{z}^{q}$,
where $m<n$ and $p<q$, and $\alpha,\beta,\gamma,\delta\in\mathbb{C}$.
Assume also that $n-m=q-p$. Then 
\[
\left|\alpha\right|^{2}n^{2}+\left|\beta\right|^{2}m^{2}-\left|\gamma\right|^{2}p^{2}-\left|\delta\right|^{2}q^{2}\ge2\left|\bar{\alpha}\beta mn-\bar{\gamma}\delta pq\right|.
\]
\end{thm*}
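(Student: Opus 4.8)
The plan is to reduce hyponormality to positivity of the self-commutator expressed through Hankel operators, and then to use the hypothesis $n-m=q-p=:d$ to recognize $[T_\varphi^*,T_\varphi]$ as an orthogonal direct sum of Jacobi (tridiagonal) matrices, from which the inequality — including the factor $2$ — falls out of a spectral estimate in the limit of large index. Write $\varphi=f+\bar g$ with $f(z)=\alpha z^m+\beta z^n$ and $\bar g(z)=\gamma\bar z^p+\delta\bar z^q$. As recalled in Section~\ref{sec:Harmonic Symbols}, for a harmonic symbol hyponormality is equivalent to $\|H_{\bar f}u\|^2\ge\|H_{\bar g}u\|^2$ for all $u\in A^2(\mathbb D)$; equivalently $[T_\varphi^*,T_\varphi]=H_{\bar f}^*H_{\bar f}-H_{\bar g}^*H_{\bar g}\ge 0$. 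I would carry out the whole computation in the orthonormal basis $e_k=\sqrt{(k+1)/\pi}\,z^k$.

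First I would record the elementary building blocks. A direct projection computation gives $P(\bar z^s z^k)=\frac{k-s+1}{k+1}z^{k-s}$ for $k\ge s$, whence
\[
\|H_{\bar z^s}z^k\|^2=\frac{\pi}{k+s+1}-\frac{(k-s+1)\pi}{(k+1)^2}=\frac{\pi\,s^2}{(k+s+1)(k+1)^2}\qquad(k\ge s),
\]
while the cross products $\langle H_{\bar z^a}z^j,H_{\bar z^b}z^k\rangle$ vanish unless $k-j=b-a$, in which case they equal an explicit rational multiple of $\pi ab$. Expanding $H_{\bar f}=\bar\alpha H_{\bar z^m}+\bar\beta H_{\bar z^n}$ and $H_{\bar g}=\gamma H_{\bar z^p}+\delta H_{\bar z^q}$ and inserting these into $\langle[T_\varphi^*,T_\varphi]e_k,e_j\rangle$, the crucial point is that the only surviving off-diagonal couplings occur between indices $j$ and $j\pm d$: the $f$-cross term survives at distance $n-m$ and the $g$-cross term at distance $q-p$, and the hypothesis $n-m=q-p=d$ aligns them. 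Thus in the basis $\{e_k\}$ the self-commutator decomposes as an orthogonal direct sum, over the residues $r=0,\dots,d-1$, of semi-infinite tridiagonal matrices $J^{(r)}$ supported on $r,r+d,r+2d,\dots$, with explicit diagonal entries $D_j$ and off-diagonal entries $E_j$.

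The inequality I would then read off from the large-index asymptotics of these Jacobi matrices. In the orthonormal normalization both $D_j$ and $E_j$ decay at the common rate $j^{-2}$; rescaling a finite section of $J^{(r)}$ by this rate, the diagonal entries converge to the single number $a$ equal to the left-hand side of the asserted inequality (the $|\alpha|^2,|\beta|^2$ contributions weighted by the squared exponents, minus the corresponding $\gamma,\delta$ contributions), while the off-diagonal entries converge to $b=\bar\alpha\beta mn-\bar\gamma\delta pq$. Positivity of $[T_\varphi^*,T_\varphi]$ forces positivity of every finite section, hence of its rescaling; and since a constant tridiagonal matrix $\mathrm{tridiag}(a;b)$ of size $N+1$ has least eigenvalue $a-2|b|\cos\!\big(\frac{\pi}{N+2}\big)$, letting the base index tend to infinity and then $N\to\infty$ yields $a\ge 2|b|$, which is precisely the claim. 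Equivalently, one tests against the normalized vectors $u=\sum_{l=0}^N e^{il\theta}e_{r+ld}$ and optimizes over $\theta$; the extremal phase is what aligns the cross terms and produces the factor $2$.

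The main obstacle is exactly the appearance of this factor $2$. A single $2\times2$ compression to $\{e_j,e_{j+d}\}$ only yields the weaker bound $a\ge|b|$ (the least eigenvalue of a $2\times2$ tridiagonal block is $a-|b|$), so one genuinely needs the infinite-dimensional positivity, accessed through \emph{arbitrarily large} finite sections together with the limiting value $2|b|$ of the Jacobi spectral radius. Making the double limit rigorous is where the work lies: I would need entrywise convergence of the rescaled sections to $\mathrm{tridiag}(a;b)$, continuity of the least eigenvalue of a fixed-size matrix, and a check that the finitely many low-index entries (where the formula for $\|H_{\bar z^s}z^k\|^2$ differs in the range $k<s$) become irrelevant as the base index grows. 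Verifying that the off-diagonal asymptotics carry exactly the coefficients $mn$ and $pq$, so that the limiting $b$ is the quantity inside the absolute value, is the one computation I would double-check most carefully.
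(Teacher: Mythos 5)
First, a point of comparison is impossible here: the paper itself does not prove this theorem. It appears in the survey Section \ref{sec:Harmonic Symbols} as a quoted result of Z.~\v{C}u\v{c}kovi\'{c} and R.~Curto \cite{CuckovicCurto}, stated without proof, so your argument can only be judged on its own merits. On those merits your strategy is sound: for harmonic $\varphi=f+\bar g$ the self-commutator is $H_{\bar f}^{*}H_{\bar f}-H_{\bar g}^{*}H_{\bar g}$; its matrix in the orthonormal basis $e_{k}$ is banded with stride $d=n-m=q-p$ and splits into Jacobi matrices over residue classes mod $d$; positivity passes to finite compressions; and rescaled sections based at large index converge entrywise to the Hermitian tridiagonal Toeplitz matrix with diagonal $a$ and off-diagonal $b$, whose least eigenvalue $a-2\left|b\right|\cos\bigl(\pi/(N+2)\bigr)$ yields $a\ge2\left|b\right|$ in the double limit. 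Your building-block formulas ($\|H_{\bar z^{s}}z^{k}\|^{2}=\pi s^{2}/\bigl((k+s+1)(k+1)^{2}\bigr)$, cross products proportional to $\pi ab$ supported at index distance $b-a$) are correct, and your observation that a single $2\times2$ compression only gives $a\ge\left|b\right|$, so that arbitrarily large sections are genuinely needed for the factor $2$, is exactly right.

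The gap is in the identification of $a$ and $b$ --- precisely the computation you said you would ``double-check most carefully'' --- and it is not a harmless detail, because the statement as quoted cannot be proved by any method: it is false as written. Carrying out your computation, the rescaled diagonal limit is $a=|\alpha|^{2}m^{2}+|\beta|^{2}n^{2}-|\gamma|^{2}p^{2}-|\delta|^{2}q^{2}$ (each coefficient weighted by \emph{its own} exponent) and the rescaled off-diagonal limit is $b=\bar{\alpha}\beta\,mn-\gamma\bar{\delta}\,pq$; neither matches the displayed inequality, which pairs $|\alpha|^{2}$ with $n^{2}$, $|\beta|^{2}$ with $m^{2}$, and uses $\bar{\gamma}\delta$. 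Your write-up is internally inconsistent on this point: your description of $a$ follows the correct computation, while your $b$ copies the quoted statement. To see the quoted version fail, take $\alpha=\gamma=0$, $\beta=1$, $m=1$, $n=2$, $p=2$, $q=3$ (zero coefficients are not excluded by the statement), so $\varphi(z)=z^{2}+\delta\bar{z}^{3}$: by Sadraoui's theorem, quoted in the same section of this paper, $T_{\varphi}$ is hyponormal if and only if $|\delta|\le2/3$, whereas the displayed inequality would force $1-9|\delta|^{2}\ge0$, i.e.\ $|\delta|\le1/3$. The corrected constants give $4-9|\delta|^{2}\ge0$, i.e.\ $|\delta|\le2/3$, which is sharp. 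The source of the discrepancy is a transcription slip in the survey: in \cite{CuckovicCurto} the coefficient $\alpha$ is attached to the \emph{higher} analytic power, and the hypothesis was relabeled here while the conclusion was kept verbatim. So your method, executed carefully, proves the correct theorem --- but you must state and prove it with the consistent constants $a$ and $b$ above, not the ones displayed in the statement.
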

Note that in the above Theorems, only the moduli of the coefficients
are taken into account. As we shall see in Section \ref{sec:Polynomials of Fixed Relative Degree},
this is not necessarily the case when $\varphi$ is not harmonic.
We now turn our attention to such operators.


\section{Toeplitz operators with non-harmonic symbol\label{sec:Non-Harmonic symbols}}

So far, all of these results deal with Toeplitz operators whose symbol
is harmonic. The study of operators whose symbol is not harmonic
turns out to be more complicated because the cross-terms in equation \eqref{eq:crossterms}
do not vanish. 


\subsection{Simple non-harmonic symbols}

We begin our own investigations by looking at some simple examples.
We did not have to look far for some results which we found surprising.

It seemed heuristically plausible that adding a symbol corresponding
to a hyponormal Toeplitz operator to a symbol corresponding to a self-adjoint
Toeplitz operator should generate a hyponormal Toeplitz operator.
But this is not the case. 
\begin{example}
\label{ex:Hyponormal+Self-adjoint}
Operator $T_{z+C\left|z\right|^{2}}$ is not hyponormal when $C<-2\sqrt{2}$. 
\end{example}
\begin{proof}
We verify the statement in Example \ref{ex:Hyponormal+Self-adjoint}.
Let $\varphi_{n}(z)=\sqrt{\frac{n+1}{\pi}}z^{n}$. The collection
$\left\{ \varphi_{n}\right\} _{n=0}^{\infty}$ is the standard orthonormal
basis of $A^{2}(\mathbb{D}).$ Given $u(z)=\sum_{n=0}^{\infty}u_{n}\varphi_{n}\in A^{2}(\mathbb{D}),$
where $\left\{ u_{n}\right\} \in\ell^{2}$ we have that 
\[
T_{z}u=\sum_{n=0}^{\infty}\sqrt{\frac{n+1}{n+2}}u_{n}\varphi_{n+1},\quad\quad\text{and}\quad\quad
T_{\lvert z\rvert^{2}}u=\sum_{n=0}^{\infty}\frac{n+1}{n+2}u_{n}\varphi_{n}.
\]

Thus, we have that the cross-terms are 
\begin{align*}
2\mathrm{Re}\left[\left\langle T_{\lvert z\rvert^{2}}T_{z}u,u\right\rangle -\left\langle T_{z}T_{\lvert z\rvert^{2}}u,u\right\rangle \right] & =2\mathrm{Re}\left[\left\langle T_{z}u,T_{\lvert z\rvert^{2}}u\right\rangle -\left\langle T_{\bar{z}}u,T_{\lvert z\rvert^{2}}u\right\rangle \right]\\
 & =2\mathrm{Re}\left[\left\langle \sum_{n=0}^{\infty}\sqrt{\frac{n+1}{n+2}}\left(\frac{n+2}{n+3}-\frac{n+1}{n+2}\right)u_{n}\varphi_{n+1},\sum_{n=0}^{\infty}u_{n}\varphi_{n}\right\rangle \right]\\
 & =2\mathrm{Re}\sum_{n=0}^{\infty}\sqrt{\frac{n+1}{n+2}}\left(\frac{n+2}{n+3}-\frac{n+1}{n+2}\right)u_{n}\overline{u_{n+1}}.
\end{align*}

Now, by \cite{FleemanKhavinson} and \cite{OlsenReguera} we have
\[
\left\langle T_{z}u,T_{z}u\right\rangle -\left\langle T_{\bar{z}}u,T_{\bar{z}}u\right\rangle \leq\frac{1}{2}\left\Vert u\right\Vert ^{2},
\]
and since $T_{\lvert z\rvert^{2}}$ is self adjoint we have 
\[
\left\langle T_{\lvert z\rvert^{2}}u,T_{\lvert z\rvert^{2}}u\right\rangle -\left\langle T_{\lvert z\rvert^{2}}u,T_{\lvert z\rvert^{2}}u\right\rangle =0.
\]

If we then replace $T_{\lvert z\rvert^{2}}$ with $T_{C\lvert z\rvert^{2}}$, with real $C$, we
have the cross-terms 
\[
2\mathrm{Re}\left[\left\langle T_{z}u,T_{C\lvert z\rvert^{2}}u\right\rangle -\left\langle T_{\bar{z}}u,T_{C\lvert z\rvert^{2}}u\right\rangle \right]=2C\mathrm{Re}\sum_{n=0}^{\infty}\sqrt{\frac{n+1}{n+2}}\left(\frac{n+2}{n+3}-\frac{n+1}{n+2}\right)u_{n}\overline{u_{n+1}}.
\]

Thus we may choose $u\in A^{2}(\mathbb{D})$ and $C\in\mathbb{R}$,
such that 
\[
\frac{1}{2}\left\Vert u\right\Vert ^{2}+2C\mathrm{Re}\sum_{n=0}^{\infty}\sqrt{\frac{n+1}{n+2}}\left(\frac{n+2}{n+3}-\frac{n+1}{n+2}\right)u_{n}\overline{u_{n+1}}<0.
\]
For such a choice of $C$ then, operator $T_{z+C\lvert z\rvert^{2}}$ would
not be hyponormal. In particular if we choose $u(z)=\frac{1}{2}\varphi_{0}+\frac{1}{2}\varphi_{1}$,
then 
\[
\left\langle \left[T_{z+C\left|z\right|^{2}}^{*},T_{z+C\left|z\right|^{2}}\right]u,u\right\rangle =\frac{1}{6}+\frac{C}{12\sqrt{2}},
\]
which will be negative whenever $C<-2\sqrt{2}$. Thus, whenever we
have $C<-2\sqrt{2}$, we have that $T_{z+C\lvert z\rvert^{2}}$ is
not hyponormal. 
\end{proof}
At this point it is not known whether $-2\sqrt{2}$ is sharp. Because
of the form of the cross-terms, a test function of the form $u(z)=u_{0}\varphi_{0}(z)+u_{1}\varphi_{1}(z)$
of a given norm will have the largest possible contribution to the
final value of the self-commutator, however such a function function
also will have a relatively large value for $\left\langle T_{z}u,T_{z}u\right\rangle -\left\langle T_{\bar{z}}u,T_{\bar{z}}u\right\rangle $,
since $\left\langle T_{z}\varphi_{n},T_{z}\varphi_{n}\right\rangle -\left\langle T_{\bar{z}}\varphi_{n},T_{\bar{z}}\varphi_{n}\right\rangle \rightarrow0$
as $n\rightarrow\infty$. In particular $\left\langle T_{z}u,T_{z}u\right\rangle -\left\langle T_{\bar{z}}u,T_{\bar{z}}u\right\rangle =\frac{1}{2}\left\Vert u\right\Vert ^{2}$
only for $u=\varphi_{0}.$ Yet the example came as a surprise to us.
We had conjectured that the sum of a self-adjoint plus a hyponormal
symbol would always correspond to a hyponormal operator, and the above
simple counterexample was striking. 
\begin{thm}
\label{thm:Hyponormal monomials}Let $\varphi(z)=a_{m,n}z^{m}\bar{z}^{n}$,
with $m\geq n$ and $a_{m,n}\in\mathbb{C}$. Then $T_{\varphi}$ is
hyponormal. 
\end{thm}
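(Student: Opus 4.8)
The plan is to diagonalize the self-commutator in the standard orthonormal basis $\{\varphi_k\}_{k=0}^{\infty}$ introduced above, where $\varphi_k(z)=\sqrt{(k+1)/\pi}\,z^k$. First I would compute the action of $T_\varphi$ on each basis vector. Using that the Bergman projection sends $z^a\bar z^b$ to $\frac{a-b+1}{a+1}z^{a-b}$ when $a\ge b$ and to $0$ otherwise, the hypothesis $m\ge n$ guarantees $m+k\ge n$ for every $k\ge 0$, so no term is annihilated and one finds
\[
T_\varphi\varphi_k=\lambda_k\,\varphi_{k+d},\qquad d:=m-n\ge 0,\qquad \lambda_k:=a_{m,n}\,\frac{\sqrt{(k+1)(k+d+1)}}{m+k+1}.
\]
Thus $T_\varphi$ is a weighted shift that advances the index by $d$; one can sanity-check this against the computations of $T_z$ and $T_{\lvert z\rvert^2}$ in Example \ref{ex:Hyponormal+Self-adjoint}, which are the cases $(m,n)=(1,0)$ and $(m,n)=(1,1)$.

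From this formula the adjoint acts by $T_\varphi^*\varphi_j=\overline{\lambda_{j-d}}\,\varphi_{j-d}$ for $j\ge d$ and $T_\varphi^*\varphi_j=0$ for $j<d$. Composing, both $T_\varphi^*T_\varphi$ and $T_\varphi T_\varphi^*$ are diagonal in $\{\varphi_k\}$, and the self-commutator satisfies
\[
[T_\varphi^*,T_\varphi]\varphi_k=\bigl(\lvert\lambda_k\rvert^2-\lvert\lambda_{k-d}\rvert^2\bigr)\varphi_k,
\]
with the convention $\lambda_{k-d}=0$ when $k<d$. Since the commutator is diagonal with these eigenvalues, $T_\varphi$ is hyponormal if and only if $\lvert\lambda_k\rvert^2-\lvert\lambda_{k-d}\rvert^2\ge 0$ for all $k$; the entries with $k<d$ are automatically $\lvert\lambda_k\rvert^2\ge 0$, so everything reduces to showing $\lvert\lambda_k\rvert\ge\lvert\lambda_{k-d}\rvert$ for $k\ge d$.

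It then suffices to prove that $k\mapsto\lvert\lambda_k\rvert$ is non-decreasing, since $d\ge 0$. Setting $x=k+1\ge 1$ and recalling $m=n+d$, we have $\lvert\lambda_k\rvert^2=\lvert a_{m,n}\rvert^2\,h(x)$ with $h(x)=\frac{x(x+d)}{(x+n+d)^2}$; the role of the hypothesis $m\ge n$ is precisely that the shift is forward, so that increasing weights yield nonnegativity rather than the reverse. A direct differentiation shows that, after cancelling one factor of $(x+n+d)$, the numerator of $h'(x)$ simplifies to $(2n+d)x+d(n+d)$, which is nonnegative for all $x\ge 1$ (and vanishes identically only in the trivial case $m=n=0$, where $T_\varphi$ is a scalar). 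Hence $h$ is non-decreasing, $\lvert\lambda_k\rvert\ge\lvert\lambda_{k-d}\rvert$, and $T_\varphi$ is hyponormal. I expect the only genuine computation to be this monotonicity estimate; the conceptual crux is recognizing that $T_\varphi$ is a weighted shift, which makes the self-commutator diagonal and reduces hyponormality to a one-variable monotonicity question. A minor point to handle carefully is the bookkeeping of the shift by $d$ (equivalently, that $T_\varphi$ splits into $d$ ordinary weighted shifts along the residue classes modulo $d$), but this does not affect the monotonicity argument.
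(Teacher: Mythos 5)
Your proposal is correct and is essentially the paper's own argument: both compute the action of $T_\varphi$ on monomials, observe that $[T_\varphi^*,T_\varphi]$ is diagonal in that basis, and verify that each diagonal entry is nonnegative, your entries $|\lambda_k|^2-|\lambda_{k-d}|^2$ coinciding, up to the normalization factor $k+1$, with the coefficients the paper obtains. The only difference is cosmetic, namely how the final coefficient inequality is checked: you deduce it from monotonicity of the weight function $h(x)=x(x+d)/(x+n+d)^2$, while the paper puts the two fractions over a common denominator and shows the resulting numerator, linear in $k$, is increasing and positive at $k=m-n$.
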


\begin{proof}
It is a well known fact (cf.~\cite[Chapter 2, Lemma 6]{DurenSchuster})
that 
\[
P(z^{m}\bar{z}^{n})=\begin{cases}
\frac{m-n+1}{m+1}z^{m-n} \quad & m\geq n\\
0 & m<n.
\end{cases}
\]
Thus, if we let $u(z)=\sum_{k=0}^{\infty}u_{k}z^{k}\in A^{2}(\mathbb{D}),$
then we have
\[
P(z^{m}\bar{z}^{n}u)=\begin{cases}
\sum_{k=0}^{\infty}\frac{m+k-n+1}{m+k+1}u_{k}z^{m+k-n} \quad& m\geq n\\
\sum_{k=n-m}^{\infty}\frac{m+k-n+1}{m+k+1}u_{k}z^{m+k-n} \quad& m<n.
\end{cases}
\]
Taking into account that $T_{\varphi}^{*}=T_{\bar{\varphi}}$, we
find that 
\begin{align}
 & \,\quad\left\langle [T_{\varphi}^{*},T_{\varphi}]u,u\right\rangle \nonumber \\
 & =\left\langle T_{\varphi}u,T_{\varphi}u\right\rangle -\left\langle T_{\varphi}^{*}u,T_{\varphi}^{*}u\right\rangle \nonumber \\
 & =\left|a_{m,n}\right|^{2}\left(\sum_{k=0}^{\infty}\frac{m+k-n+1}{(m+k+1)^{2}}\left|u_{k}\right|^{2}-\sum_{k=m-n}^{\infty}\frac{n+k-m+1}{(n+k+1)^{2}}\left|u_{k}\right|^{2}\right)\nonumber \\
 & =\left|a_{m,n}\right|^{2}\left(\sum_{k=0}^{m-n-1}\frac{m+k-n+1}{(m+k+1)^{2}}\left|u_{k}\right|^{2}+\sum_{k=m-n}^{\infty}\left(\frac{m+k-n+1}{(m+k+1)^{2}}-\frac{n+k-m+1}{(n+k+1)^{2}}\right)\left|u_{k}\right|^{2}\right)\label{eq:Monomial Formula 1}
\end{align}

Now, 
\[
\frac{m+k-n+1}{(m+k+1)^{2}}-\frac{n+k-m+1}{(n+k+1)^{2}}=\frac{(n+k+1)^{2}\left(m+k-n+1\right)-(m+k+1)^{2}\left(n+k-m+1\right)}{(m+k+1)^{2}(n+k+1)^{2}}
\]
\begin{equation}
=\frac{(m^{2}-n^{2})k+\left(m-n+1\right)\left(n+1\right)^{2}+\left(m-n-1\right)\left(m+1\right)^{2}}{(m+k+1)^{2}(n+k+1)^{2}}.\label{eq:Monomial Formula 2}
\end{equation}
This is clearly positive when $k=m-n\geq1$.

Further, when we take the derivative of the numerator with respect
to $k$, we find that it is positive whenever $m>n$, and so the numerator
is increasing and thus always positive. Therefore we may conclude
that 
\[
\sum_{k=0}^{m-n-1}\frac{m+k-n+1}{(m+k+1)^{2}}\left|u_{k}\right|^{2}+\sum_{k=m-n}^{\infty}\left(\frac{m+k-n+1}{(m+k+1)^{2}}-\frac{n+k-m+1}{(n+k+1)^{2}}\right)\left|u_{k}\right|^{2}\geq0
\]
for all $u(z)=\sum_{k=0}^{\infty}u_{k}z^{k}\in A^{2}(\mathbb{D})$,
and so $T_{\varphi}$ is hyponormal. 
\end{proof}

\subsection{Non-harmonic polynomials}

We now turn to an examination of two term non-harmonic polynomials. 
\begin{thm}
\label{thm:Hyponormal + Hyponormal}Suppose $f=a_{m,n}z^{m}\bar{z}^{n}$
and $g=a_{i,j}z^{i}\bar{z}^{j}$, with $m>n$ and $i>j$. Then $T_{f+g}$
is hyponormal if for each $k\geq0$ the term
\[
\left|a_{m,n}\right|^{2}\frac{m-n+k+1}{(m+k+1)^{2}}+\left|a_{i,j}\right|^{2}\frac{i-j+k+1}{(i+k+1)^{2}}
\]
is sufficiently large. 
\end{thm}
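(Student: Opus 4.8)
The plan is to treat $T_{f+g}$ as $T_f+T_g$ and expand its self-commutator through the cross-term identity \eqref{eq:crossterms} with $T=T_f$ and $S=T_g$, giving
\[
\left\langle[T_{f+g}^{*},T_{f+g}]u,u\right\rangle=\left(\|T_fu\|^2-\|T_f^*u\|^2\right)+\left(\|T_gu\|^2-\|T_g^*u\|^2\right)+2\mathrm{Re}\left[\langle T_fu,T_gu\rangle-\langle T_f^*u,T_g^*u\rangle\right].
\]
Since $m>n$ and $i>j$, Theorem \ref{thm:Hyponormal monomials} already tells us that $T_f$ and $T_g$ are individually hyponormal; but because the hypothesis is phrased through the quantity $c_k:=|a_{m,n}|^2\frac{m-n+k+1}{(m+k+1)^2}+|a_{i,j}|^2\frac{i-j+k+1}{(i+k+1)^2}$, I would organize the estimate so that the positive reservoir is the full analytic-part norm and every negative contribution—including the two adjoint norms and the cross terms—is measured against it.

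First I would write everything explicitly in the coefficients of $u=\sum_{k\ge0}u_kz^k$. Using the formula for $P(z^a\bar z^b)$ recalled in the proof of Theorem \ref{thm:Hyponormal monomials},
\[
T_fu=a_{m,n}\sum_{k\ge0}\frac{m-n+k+1}{m+k+1}u_kz^{m-n+k},\qquad T_f^*u=\bar a_{m,n}\sum_{k\ge m-n}\frac{n-m+k+1}{n+k+1}u_kz^{n-m+k},
\]
and likewise for $g$. The decisive observation is that the positive diagonal part is exactly the advertised quantity,
\[
\|T_fu\|^2+\|T_gu\|^2=\sum_{k\ge0}\left(|a_{m,n}|^2\frac{m-n+k+1}{(m+k+1)^2}+|a_{i,j}|^2\frac{i-j+k+1}{(i+k+1)^2}\right)|u_k|^2=\sum_{k\ge0}c_k|u_k|^2,
\]
computed as in \eqref{eq:Monomial Formula 1}. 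Thus the self-commutator equals $\sum_k c_k|u_k|^2$ minus the negative diagonal $\|T_f^*u\|^2+\|T_g^*u\|^2$ plus the two cross terms, all now expressed as Hermitian quadratic forms in $\{u_k\}$.

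Next I would control the cross terms by matching powers of $z$. This shows that $\langle T_fu,T_gu\rangle$ couples $u_k$ only with $u_{k+d}$, where $d=(m-n)-(i-j)$, while $\langle T_f^*u,T_g^*u\rangle$ couples $u_k$ only with $u_{k-d}$; hence the entire ``bad'' part is a \emph{banded} Hermitian form in which each index $k$ interacts with at most the two neighbors $k\pm d$. Applying $|u_k\overline{u_l}|\le\tfrac12(|u_k|^2+|u_l|^2)$ to each off-diagonal product redistributes the cross terms onto the diagonal, and because the coupling is banded, each $|u_k|^2$ collects only finitely many, explicitly computable negative contributions: the $k$-th adjoint coefficients $\frac{n-m+k+1}{(n+k+1)^2}$ and $\frac{j-i+k+1}{(j+k+1)^2}$ together with the redistributed cross-coefficients from the output powers $m-n+k$ and their neighbors. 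Collecting terms, I obtain a lower bound $\left\langle[T_{f+g}^{*},T_{f+g}]u,u\right\rangle\ge\sum_k(c_k-B_k)|u_k|^2$, where $B_k$ is an explicit finite combination of these coefficients depending only on $m,n,i,j$ and $|a_{m,n}|,|a_{i,j}|$.

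Finally, whenever $c_k\ge B_k$ for every $k$—which is the precise meaning of ``$c_k$ sufficiently large''—this lower bound is nonnegative for all $u\in A^{2}(\mathbb{D})$, so $T_{f+g}$ is hyponormal. I expect the main obstacle to be the cross-term bookkeeping: the cross-coefficients decay at the same rate $O(1/k)$ as $c_k$ itself, so the AM--GM redistribution must be carried out carefully—essentially a Schur-test/diagonal-dominance estimate on the banded form—so as to extract a clean, explicit $B_k$ rather than a bound that swamps $c_k$. The remaining ingredients are routine series manipulations of exactly the kind already performed in \eqref{eq:Monomial Formula 2}.
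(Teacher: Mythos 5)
Your proposal is correct and takes essentially the same route as the paper: expand the self-commutator via \eqref{eq:crossterms}, write everything in the Taylor coefficients of $u$, observe that the cross terms form a single-banded Hermitian form, redistribute them onto the diagonal with $2\mathrm{Re}(a\bar{b})\le|a|^{2}+|b|^{2}$, and finish by a termwise comparison of the coefficients of $|u_{k}|^{2}$, which is precisely how the paper arrives at the four conditions in its accompanying remark. Incidentally, your band width $(m-n)-(i-j)$ is the correct coupling distance; the paper's printed pairing $u_{k}\overline{u_{k+m-n+i-j}}$ appears to carry a sign slip, although its coefficients $C_{k}$ match the banded structure you describe.
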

\begin{rem*}
 Here sufficiently large means
that, under the assumption $m-n>i-j$, we have the following four conditions:
\[
\left|\frac{a_{m,n}}{a_{i,j}}\right|\frac{m+k-n+1}{(m+k+1)^{2}}+\left|\frac{a_{i,j}}{a_{m,n}}\right|\frac{i+k-j+1}{(i+k+1)^{2}}\geq C_{k}
\]
for $k\leq i-j-1$, and 
\[
\left|\frac{a_{m,n}}{a_{i,j}}\right|\frac{m+k-n+1}{(m+k+1)^{2}}+\left|\frac{a_{i,j}}{a_{m,n}}\right|\left(\frac{i+k-j+1}{(i+k+1)^{2}}-\frac{j+k-i+1}{(j+k+1)^{2}}\right)\geq C_{k}
\]
for $i-j\leq k\leq m-n-1$, and 
\[
\left|\frac{a_{m,n}}{a_{i,j}}\right|\left(\frac{m+k-n+1}{(m+k+1)^{2}}-\frac{n+k-m+1}{(n+k+1)^{2}}\right)+\left|\frac{a_{i,j}}{a_{m,n}}\right|\left(\frac{i+k-j+1}{(i+k+1)^{2}}-\frac{j+k-i+1}{(j+k+1)^{2}}\right)\geq C_{k}
\]
for $m-n\leq k\leq m-n+i-j-1,$ and 
\[
\left|\frac{a_{m,n}}{a_{i,j}}\right|\left(\frac{m+k-n+1}{(m+k+1)^{2}}-\frac{n+k-m+1}{(n+k+1)^{2}}\right)+\left|\frac{a_{i,j}}{a_{m,n}}\right|\left(\frac{i+k-j+1}{(i+k+1)^{2}}-\frac{j+k-i+1}{(j+k+1)^{2}}\right)\geq C_{k}+D_{k}
\]
where 
\begin{equation}
C_{k}:=\begin{cases}
\frac{m-n+k+1}{\left(m+k+1\right)\left(m-n+j+k+1\right)}, & \text{for }0\leq k\leq i-j-1,\\
\frac{m-n+k+1}{\left(m+k+1\right)\left(m-n+j+k+1\right)}-\frac{j-i+k+1}{\left(j+k+1\right)\left(j-i+m+k+1\right)}, & \text{for }k\geq i-j.
\end{cases}\label{Ck}
\end{equation}
and 
\begin{equation}
D_{k}:=\frac{j-i+k+1}{\left(j-i+n+k+1\right)\left(2j-i+k+1\right)}-\frac{2j-2i+n-m+k+1}{\left(2j-i+n-m+k+1\right)\left(2j-2i+n+k+1\right)}.\label{Dk}
\end{equation}
\end{rem*}
\begin{proof}
Recall that for $f,g\in L^{\infty}(\mathbb{D})$, and $u\in A^{2}$,
we have 
\begin{align}
\left\langle [T_{f+g}^{*},T_{f+g}]u,u\right\rangle =\left\Vert T_{f}u\right\Vert ^{2}-\left\Vert T_{f}^{*}u\right\Vert ^{2}+\left\Vert T_{g}u\right\Vert ^{2}-\left\Vert T_{g}^{*}u\right\Vert ^{2}+2\mathrm{Re}\left[\left\langle T_{f}u,T_{g}u\right\rangle -\left\langle T_{f}^{*}u,T_{g}^{*}u\right\rangle \right].\label{blahblahblah}
\end{align}

We begin to calculate the cross-term $2\mathrm{Re}\left[\left\langle T_{f}u,T_{g}u\right\rangle -\left\langle T_{f}^{*}u,T_{g}^{*}u\right\rangle \right]$.
Without loss of generality, we may assume that $m-n>i-j$. Under this
assumption, we find 
\begin{align*}
 & \,\,\quad2\mathrm{Re}\left[\left\langle T_{f}u,T_{g}u\right\rangle -\left\langle T_{f}^{*}u,T_{g}^{*}u\right\rangle \right]\\
 & =2\mathrm{Re}\left(a_{m,n}\overline{a_{i,j}}\right)\left[\left\langle \sum_{k=0}^{\infty}\frac{m+k-n+1}{m+k+1}u_{k}z^{m+k-n},\sum_{k=0}^{\infty}\frac{i+k-j+1}{i+k+1}\overline{u_{k}z^{i+k-j}}\right\rangle \right.\\
 & \qquad\qquad\qquad\qquad\left.-\left\langle \sum_{k=m-n}^{\infty}\frac{n+k-m+1}{n+k+1}u_{k}z^{n+k-m},\sum_{k=i-j}^{\infty}\frac{j+k-i+1}{j+k+1}\overline{u_{k}z^{j+k-i}}\right\rangle \right]\\
 & =2\sum_{k=0}^{\infty}C_{k}\mathrm{Re}\left(a_{m,n}\overline{a_{i,j}}u_{k}\overline{u_{k+m-n+i-j}}\right),
\end{align*}
where, for the purposes of slightly less daunting expressions, we
used $C_{k}$ as defined by \eqref{Ck} in the above remark. We will also, for reasons that will soon be clear, use $D_{k}$ as defined by \eqref{Dk}.

Unfortunately, as we have seen, we cannot control the sign of these
cross terms. Therefore, we will assume that we must always subtract
them. Further, by the inequality $2\mathrm{Re}\left(a\bar{b}\right)\leq\left|a\right|^{2}+\left|b\right|^{2}$,
we have 
\[
2\mathrm{Re}\left(a_{m,n}\overline{a_{i,j}}u_{k}\overline{u_{k+m-n+i-j}}\right)\leq\left|a_{m,n}a_{i,j}\right|\left(\left|u_{k}\right|^{2}+\left|u_{k+m-n+i-j}\right|^{2}\right).
\]

We combine equation \eqref{blahblahblah} with the calculations performed
in the proof of Theorem \ref{thm:Hyponormal monomials} to evaluate
$$\left\Vert T_{f}u\right\Vert ^{2}-\left\Vert T_{f}^{*}u\right\Vert ^{2}+\left\Vert T_{g}u\right\Vert ^{2}-\left\Vert T_{g}^{*}u\right\Vert ^{2}$$
applied to our given $f$ and $g$. Thereby we may conclude that $T_{\varphi}$
will be hyponormal if 
\begin{align*}
 & \quad\,\,\left|a_{m,n}\right|^{2}\left(\sum_{k=0}^{m-n-1}\frac{m+k-n+1}{(m+k+1)^{2}}\left|u_{k}\right|^{2}+\sum_{k=m-n}^{\infty}\left(\frac{m+k-n+1}{(m+k+1)^{2}}-\frac{n+k-m+1}{(n+k+1)^{2}}\right)\left|u_{k}\right|^{2}\right)\\
 & \quad\,\,+\left|a_{i,j}\right|^{2}\left(\sum_{k=0}^{i-j-1}\frac{i+k-j+1}{(i+k+1)^{2}}\left|u_{k}\right|^{2}+\sum_{k=i-j}^{\infty}\left(\frac{i+k-j+1}{(i+k+1)^{2}}-\frac{j+k-i+1}{(j+k+1)^{2}}\right)\left|u_{k}\right|^{2}\right)\\
 & \ge\left|a_{m,n}a_{i,j}\right|\sum_{k=0}^{\infty}C_{k}\left(\left|u_{k}\right|^{2}+\left|u_{k+m-n+i-j}\right|^{2}\right)\\
 & =\sum_{k=0}^{\infty}C_{k}\left|u_{k}\right|^{2}+\sum_{k=m-n+i-j}^{\infty}D_{k}\left|u_{k}\right|^{2}.
\end{align*}

Thus, an appropriate term by term comparison of the coefficients of
$\left|u_{k}\right|^{2}$ will show that operator $T_{f+g}$ is hyponormal,
if the bounds given in the above remark hold.

In particular, we obtain the stronger estimate
\[
\left\langle [T_{f+g}^{*},T_{f+g}]u,u\right\rangle \ge\sum_{k=0}^{\infty}A_{k}\left|u_{k}\right|^{2},
\]
where $A_{k}$ is non-negative for all $k$. 
\end{proof}
The next theorem examines the case when one of the terms in our binomial
is the symbol of a cohyponormal operator (i.e.~an operator whose adjoint is hyponormal). This is in contrast to Theorem \ref{thm:Hyponormal + Hyponormal}
where each term individually yielded a hyponormal operator. 
\begin{thm}
\label{thm:Hyponormal + Co-Hyponormal}Suppose $f=a_{m,n}z^{m}\bar{z}^{n}$
and $g=a_{i,j}\bar{z}^{i}z^{j}$, with $m>n$ and $i>j$. Then $T_{f+g}$
is hyponormal if for each $k\geq0$ 
\[
\left|a_{m,n}\right|^{2}\frac{m-n+k+1}{(m+k+1)^{2}}-\left|a_{i,j}\right|^{2}\frac{i-j+k+1}{(i+k+1)^{2}}
\]
is sufficiently large. 
\end{thm}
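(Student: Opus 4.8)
The plan is to run the argument of Theorem \ref{thm:Hyponormal + Hyponormal} along the same lines, the only structural change being that $T_g$ is now \emph{co}hyponormal rather than hyponormal. As before I would start from the expansion
\[
\langle [T_{f+g}^*, T_{f+g}]u, u\rangle = \left(\|T_f u\|^2 - \|T_f^* u\|^2\right) + \left(\|T_g u\|^2 - \|T_g^* u\|^2\right) + 2\mathrm{Re}\left[\langle T_f u, T_g u\rangle - \langle T_f^* u, T_g^* u\rangle\right]
\]
coming from \eqref{blahblahblah}, and evaluate the three pieces on $u = \sum_k u_k z^k$ using the projection formula recalled in the proof of Theorem \ref{thm:Hyponormal monomials}.

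First I would record the two diagonal pieces. The $f$-term is the self-commutator form of $T_f$ and is nonnegative by Theorem \ref{thm:Hyponormal monomials}, with leading coefficient $|a_{m,n}|^2 \frac{m-n+k+1}{(m+k+1)^2}$ on $|u_k|^2$. Because $g = a_{i,j} z^j \bar z^i$ with $i > j$, the operator $T_g$ lowers degrees by $i-j$ while $T_g^*$ raises them, so $T_g^* = T_{\overline{a_{i,j}} z^i \bar z^j}$ is hyponormal and the $g$-term equals the negative of the hyponormal form for $z^i \bar z^j$,
\[
\|T_g u\|^2 - \|T_g^* u\|^2 = -|a_{i,j}|^2\left(\sum_{k=0}^{i-j-1}\frac{i+k-j+1}{(i+k+1)^2}|u_k|^2 + \sum_{k \ge i-j}\left(\frac{i+k-j+1}{(i+k+1)^2} - \frac{j+k-i+1}{(j+k+1)^2}\right)|u_k|^2\right) \le 0 .
\]
This single sign flip is exactly what replaces the ``$+$'' of Theorem \ref{thm:Hyponormal + Hyponormal} by the ``$-$'' in the quantity displayed in the present statement.

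Next I would treat the cross-term. Matching output degrees shows that the terms $z^{m+k-n}$ of $T_f u$ pair with the terms $z^{j+\ell-i}$ of $T_g u$ exactly when $\ell = k + (m-n) + (i-j)$, and symmetrically for $\langle T_f^* u, T_g^* u\rangle$; hence the cross-term couples only $u_k$ with $u_{k+s}$, where $s := (m-n)+(i-j)$, and can be written $2\mathrm{Re}\bigl(a_{m,n}\overline{a_{i,j}}\sum_k E_k\, u_k \overline{u_{k+s}}\bigr)$ for explicit coefficients $E_k$ (an analog of $C_k$ from \eqref{Ck}). Following the earlier proof, I would assume the worst sign and apply $2\mathrm{Re}(a\bar b) \le |a|^2 + |b|^2$, so that the cross-term lowers the coefficient of $|u_k|^2$ by at most $|a_{m,n}a_{i,j}|(E_k + E_{k-s})$, with $E_{k-s} := 0$ for $k < s$. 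Exactly as in Theorem \ref{thm:Hyponormal + Hyponormal}, the breakpoints $i-j < m-n < s$ split $\{k \ge 0\}$ into four ranges, and a term-by-term comparison reduces hyponormality to four inequalities requiring
\[
|a_{m,n}|^2 \frac{m-n+k+1}{(m+k+1)^2} - |a_{i,j}|^2 \frac{i-j+k+1}{(i+k+1)^2}
\]
(this being the leading part of the true net diagonal coefficient, which on the later ranges also picks up the reduced terms above) to dominate the corresponding $E_k$ and $E_k + E_{k-s}$. ``Sufficiently large'' is precisely this family of conditions, the analog of the remark following Theorem \ref{thm:Hyponormal + Hyponormal}.

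The hard part will be the sign of the cohyponormal diagonal. In Theorem \ref{thm:Hyponormal + Hyponormal} both diagonal forms were nonnegative and jointly dominated the cross-term, so one only had to give up ground to the cross-term; here the $g$-diagonal actively subtracts, and the positive $f$-diagonal must absorb \emph{simultaneously} this negative cohyponormal contribution and the cross-term constants. Consequently the explicit computation of $E_k$, and the verification that on each of the four ranges the displayed difference genuinely controls $E_k$ (and the shifted $D_k$-type term $E_{k-s}$ for $k \ge s$), is more delicate than before and constitutes the main obstacle; once the $E_k$ are in hand the remaining estimates are routine and parallel \eqref{Ck}--\eqref{Dk}.
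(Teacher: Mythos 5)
Your proposal follows essentially the same route as the paper's proof: the same expansion \eqref{blahblahblah}, the same diagonal computation (with the $g$-part sign-flipped because $T_g$ is cohyponormal), the same cross-term pairing coupling $u_k$ with $u_{k+(m-n)+(i-j)}$, the same worst-sign estimate via $2\mathrm{Re}(a\bar{b})\le|a|^{2}+|b|^{2}$, and the same four-range term-by-term comparison; your coefficients $E_k$ and $E_{k-s}$ are precisely the paper's $\widetilde{C}_{k}$ and $\widetilde{D}_{k}$ from \eqref{wtCk} and \eqref{wtDk}. One minor bookkeeping caveat: since $f$ and $g$ now play asymmetric roles you cannot normalize $m-n>i-j$ as in Theorem \ref{thm:Hyponormal + Hyponormal}, so the breakpoints must be written with $\min\{m-n,i-j\}$ and $\max\{m-n,i-j\}$, which is how the paper's Remark \ref{rem:Hyponormal + Co-Hyponormal} states the four conditions.
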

\begin{rem}\label{rem:Hyponormal + Co-Hyponormal}
Here, as in Theorem \ref{thm:Hyponormal + Hyponormal}, we can specify what sufficiently
large means. To do so, we abbreviate
\begin{align}
\widetilde{A}_{k} & :=\left|a_{m,n}\right|^{2}\left(\frac{m+k-n+1}{(m+k+1)^{2}}-\frac{n+k-m+1}{(n+k+1)^{2}}\right),\text{ and }\label{wtAk}
\\
\widetilde{B}_{k} & :=\left|a_{i,j}\right|^{2}\left(\frac{i+k-j+1}{(i+k+1)^{2}}-\frac{j+k-i+1}{(j+k+1)^{2}}\right),
\label{wtBk}
\end{align}
as well as
\begin{align}
\widetilde{C}_{k}&:=\frac{m-n+k+1}{(m+k+1)(m-n+i+k+1)}-\frac{i-j+k+1}{(i+k+1)(i-j+m+k+1)}\,,
\text{ and }\label{wtCk}
\\
\widetilde{D}_{k}&:=\frac{k+j-i+1}{(n+j-i+k+1)(j+k+1)}-\frac{k+n-m+1}{(m-n+j+k+1)(n+k+1)}\,.
\label{wtDk}
\end{align}
Now sufficiently large means that the following four conditions are satisfied:
\[
\left|\frac{a_{m,n}}{a_{i,j}}\right|\frac{m+k-n+1}{(m+k+1)^{2}}-\left|\frac{a_{i,j}}{a_{m,n}}\right|\frac{i+k-j+1}{(i+k+1)^{2}}\geq\widetilde{C}_{k}
\]
for $k\leq\mathrm{min}\{m-n,i-j\}-1$, and 
\[
\widetilde{C}_{k}\leq\begin{cases}
\left|\frac{a_{m,n}}{a_{i,j}}\right|\frac{m+k-n+1}{(m+k+1)^{2}}-\frac{\widetilde{B}_{k}}{\left|a_{m,n}a_{i,j}\right|} \quad& \text{when }m-n>i-j\par\vspace{0.2in}\\
\frac{\widetilde{A}_{k}}{\left|a_{m,n}a_{i,j}\right|}-\left|\frac{a_{i,j}}{a_{m,n}}\right|\frac{i+k-j+1}{(i+k+1)^{2}} & \text{when }m-n<i-j
\end{cases}
\]
for $\mathrm{min}\{m-n,i-j\}\leq k\leq\mathrm{max}\{m-n,i-j\}-1$,
and 
\[
\frac{\widetilde{A}_{k}-\widetilde{B}_{k}}{\left|a_{m,n}a_{i,j}\right|}\geq\widetilde{C}_{k}
\]
for $\mathrm{max}\{m-n,i-j\}\leq k\leq m-n+i-j-1$, and 
\[
\frac{\widetilde{A}_{k}-\widetilde{B}_{k}}{\left|a_{m,n}a_{i,j}\right|}
\geq\widetilde{C}_{k}+\widetilde{D}_{k}
\]
for $k\geq m-n+i-j$. 
\end{rem}
\begin{proof}
Recall that for $f,g\in L^{\infty}(\mathbb{D})$, and $u\in A^{2}(\mathbb{D})$,
we have 
\[
\left\langle [T_{f+g}^{*},T_{f+g}]u,u\right\rangle =\left\Vert T_{f}u\right\Vert ^{2}-\left\Vert T_{f}^{*}u\right\Vert ^{2}+\left\Vert T_{g}u\right\Vert ^{2}-\left\Vert T_{g}^{*}u\right\Vert ^{2}+2\mathrm{Re}\left[\left\langle T_{f}u,T_{g}u\right\rangle -\left\langle T_{f}^{*}u,T_{g}^{*}u\right\rangle \right].
\]

Again, the calculations performed in the proof of Theorem \ref{thm:Hyponormal monomials}
applied to the current $f$ and $g$ show 
\begin{align}
 & \quad\,\,\left\Vert T_{f}u\right\Vert ^{2}-\left\Vert T_{f}^{*}u\right\Vert ^{2}+\left\Vert T_{g}u\right\Vert ^{2}-\left\Vert T_{g}^{*}u\right\Vert ^{2}\nonumber \\
 & =\left|a_{m,n}\right|^{2}\sum_{k=0}^{m-n-1}\frac{m+k-n+1}{(m+k+1)^{2}}\left|u_{k}\right|^{2}+\sum_{k=m-n}^{\infty}\widetilde{A}_{k}\left|u_{k}\right|^{2}\\
 & \quad\,\,-\left|a_{i,j}\right|^{2}\sum_{k=0}^{i-j-1}\frac{i+k-j+1}{(i+k+1)^{2}}\left|u_{k}\right|^{2}-\sum_{k=i-j}^{\infty}\widetilde{B}_{k}\left|u_{k}\right|^{2},
\end{align}
where we used $\widetilde{A}_{k}$ and $\widetilde{B}_{k}$ as defined in \eqref{wtAk} and \eqref{wtBk}.

This proof differs from that of Theorem \ref{thm:Hyponormal + Hyponormal}
in the calculation for the cross-terms.
Under the assumption $i>j$, we have 
\begin{align}
 & \,\,\quad2\mathrm{Re}\left[\left\langle T_{f}u,T_{g}u\right\rangle -\left\langle T_{f}^{*}u,T_{g}^{*}u\right\rangle \right]\nonumber \\
 & =2\mathrm{Re}\left(a_{m,n}\overline{a_{i,j}}\right)\left[\left\langle \sum_{k=0}^{\infty}\frac{m+k-n+1}{m+k+1}u_{k}z^{m+k-n},\sum_{k=i-j}^{\infty}\frac{j+k-i+1}{j+k+1}\overline{u_{k}z^{j+k-i}}\right\rangle \right.\nonumber \\
 & \qquad\qquad\qquad\qquad\left.-\left\langle \sum_{k=m-n}^{\infty}\frac{n+k-m+1}{n+k+1}u_{k}z^{n+k-m},\sum_{k=i-j}^{\infty}\frac{i+k-j+1}{i+k+1}\overline{u_{k}z^{i+k-j}}\right\rangle \right]\nonumber \\
 & =2\mathrm{Re}\left(a_{m,n}\overline{a_{i,j}}\right)\sum_{k=0}^{\infty}\widetilde{C}_{k}u_{k}\overline{u_{m-n+i-j+k}}\,.\label{eq:Cross Terms chunk}
\end{align}
via direct calculation and with $\widetilde{C}_{k}$ from \eqref{wtCk}.

The argument now follows \emph{mutatis mutandis} as in Theorem \ref{thm:Hyponormal + Hyponormal}.
In particular, with $\widetilde{D}_{k}$ from \eqref{wtDk} 
and once again taking advantage of the inequality $2\mathrm{Re}\left(a\bar{b}\right)\leq\left|a\right|^{2}+\left|b\right|^{2}$,
we have that if the conditions given in Remark \ref{rem:Hyponormal + Co-Hyponormal} hold, then operator
$T_{f+g}$ will be hyponormal. 
\end{proof}
Both of the above theorems are rather cumbersome to apply directly.
Further, it is not immediately clear \emph{a priori} that the relevant
bounds are ever actually attainable. In the following example we look
at a symbol which shows that the bounds in Theorem \ref{thm:Hyponormal + Co-Hyponormal}
can be attained. This shows that while a seemingly ``nice'' symbol
like $T_{z-3\left|z\right|^{2}}$ might fail to be hyponormal even
though it is the sum of a sub-normal operator and a self-adjoint operator,
the sum of a hyponormal and co-hyponormal operator might still produce
an operator which is hyponormal. 
\begin{example}
Consider $\varphi(z)=z^{2}\overline{z}+\frac{1}{7}\bar{z}^{4}z^{3}$.
We can plug this into the relevant calculations from Theorem \ref{thm:Hyponormal + Co-Hyponormal}
to test for hyponormality. In particular, we find that 
\[
\widetilde{A}_{k}=\frac{3k+8}{(k+3)^{2}(k+2)^{2}},
\]
and that 
\[
\widetilde{B}_{k}+\left|a_{m,n}a_{i,j}\right|\left(\widetilde{C}_{k}+\widetilde{D}_{k}\right)
\]
\begin{equation}
=\frac{1}{7}\left(\frac{7k+32}{7(k+5)^{2}(k+4)^{2}}+\frac{3k^{3}+21k^{2}+46k+8}{(k+6)(k+5)(k+4)(k+3)(k+2)(k+1)}\right).\label{eq:Rational Bound}
\end{equation}
Thus, we find that $T_{\varphi}$ will be hyponormal if 
\begin{align*}
 & \widetilde{A}_{k}-\widetilde{B}_{k}-\left|a_{m,n}a_{i,j}\right|\left(\widetilde{C}_{k}+\widetilde{D}_{k}\right)\\
= & \frac{3k+8}{(k+3)^{2}(k+2)^{2}}-\frac{1}{7}\left(\frac{7k+32}{7(k+5)^{2}(k+4)^{2}}+\frac{3k^{3}+21k^{2}+46k+8}{(k+6)(k+5)(k+4)(k+3)(k+2)(k+1)}\right)\\
= & \frac{119k^{7}+3475k^{6}+41785k^{5}+267977k^{4}+985764k^{3}+2061168k^{2}+2228760k+927168}{49(k+6)(k+5)^{2}(k+4)^{2}(k+3)^{2}(k+2)^{2}(k+1)}>0
\end{align*}
for all $k\geq2$, since the checks for $k=0,1$ show the desired
inequalities hold.

However in fact, it is clear from observation that this rational function
is positive for all $k>0$, and in particular for $k\geq2$. Thus
$T_{\varphi}$ is hyponormal. This example will be explored more in depth
in Theorem \ref{thm: Hyponormal + Co-Hyponormal construction}.

Note however that for our choice of $\varphi,$ since we have that
the expression in \eqref{eq:Rational Bound} is less than $\frac{3k+8}{(k+3)^{2}(k+2)^{2}}$
for all $k>0$, only one check was actually necessary to show that
$T_{\varphi}$ is hyponormal. Indeed the construction of this example
was based on ensuring a sufficiently quick decay of the expression in \eqref{eq:Rational Bound}
while also ensuring that for small values of $k$ the required inequalities
would still hold. In the following theorem, we generalize the idea
of this construction to find a general construction for hyponormal
operators whose symbol is of the form in the hypothesis of Theorem
\ref{thm:Hyponormal + Co-Hyponormal}.
\end{example}
\begin{thm}
\label{thm: Hyponormal + Co-Hyponormal construction}Fix $\delta\in\mathbb{N}.$
For every integer $n\in\mathbb{N}$ there exists $j\in\mathbb{N}$,
such that $T_{\varphi}$ with symbol $\varphi(z)=z^{n+\delta}\overline{z}^{n}+\frac{1}{2j+\delta}\overline{z}^{j+\delta}z^{j}$
is hyponormal. 
\end{thm}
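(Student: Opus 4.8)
The plan is to exhibit $\varphi$ as a special case of the symbol treated in Theorem~\ref{thm:Hyponormal + Co-Hyponormal} and then to verify, for a suitable $j$, the four families of inequalities collected in Remark~\ref{rem:Hyponormal + Co-Hyponormal}. Writing $f=z^{n+\delta}\bar z^{n}$ and $g=\tfrac{1}{2j+\delta}\bar z^{j+\delta}z^{j}$ puts us in the setting of that theorem with $m=n+\delta$, $i=j+\delta$, $a_{m,n}=1$ and $a_{i,j}=\tfrac{1}{2j+\delta}$. The decisive simplification is $m-n=i-j=\delta$, so that $\min\{m-n,i-j\}=\max\{m-n,i-j\}=\delta$; hence the second of the four ranges in Remark~\ref{rem:Hyponormal + Co-Hyponormal} is empty, and only the finite families $0\le k\le\delta-1$ and $\delta\le k\le2\delta-1$ together with the infinite family $k\ge2\delta$ remain.

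First I would dispose of the two finite families. In each case the left-hand side grows linearly in $j$ (it is $2j+\delta$ times a positive quantity bounded away from $0$, up to a vanishing correction), whereas the comparison value $\widetilde C_{k}$ of \eqref{wtCk} is $O(1/j)$ as $j\to\infty$ for each fixed $k$. Thus every one of these finitely many inequalities is satisfied once $j$ is taken large. This reduces the whole problem to the infinite family $k\ge2\delta$, which after multiplying through by $|a_{m,n}a_{i,j}|=\tfrac{1}{2j+\delta}$ becomes the single master inequality
\[
\widetilde A_{k}\ \ge\ \widetilde B_{k}+\frac{1}{2j+\delta}\bigl(\widetilde C_{k}+\widetilde D_{k}\bigr),\qquad k\ge2\delta,
\]
with $\widetilde A_{k},\widetilde B_{k}$ as in \eqref{wtAk}, \eqref{wtBk}; its right-hand side is exactly the bound \eqref{eq:Rational Bound}.

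The heart of the argument is to enforce this comparison for all $k\ge2\delta$ at once, and here the precise value of the coefficient $a_{i,j}=\tfrac{1}{2j+\delta}$ is what makes the construction work. The positive reservoir $\widetilde A_{k}$ coming from $T_{f}$ (nonnegative for $k\ge\delta$ by the computation in Theorem~\ref{thm:Hyponormal monomials}) does not depend on $j$, while every competing contribution carries a factor of $\tfrac{1}{2j+\delta}$ and is therefore suppressed; for each fixed $k$ the right-hand side tends to $0$ as $j\to\infty$. Both sides decay like $k^{-3}$, and one computes $k^{3}\widetilde A_{k}\to\delta(2n+\delta)$, whereas the leading coefficient $\lim_{k\to\infty}k^{3}\bigl[\widetilde B_{k}+\tfrac{1}{2j+\delta}(\widetilde C_{k}+\widetilde D_{k})\bigr]$ of the right-hand side stays below a $j$-independent constant strictly smaller than $\delta(2n+\delta)$ (this is exactly where $n\ge1$ is used), so the two tails never cross. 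Following the template of the worked example preceding the theorem, I would then clear denominators, writing the difference of the two sides as $P_{j}(k)/Q_{j}(k)$ with $Q_{j}(k)>0$ for $k\ge0$, and choose $j=j(n,\delta)$ so large that the numerator $P_{j}(k)$ has nonnegative coefficients as a polynomial in $k$ --- just as the numerator $119k^{7}+3475k^{6}+\dots+927168$ arose in the example. Nonnegativity of $P_{j}$ on $[0,\infty)$ then gives the master inequality, and with it, via Theorem~\ref{thm:Hyponormal + Co-Hyponormal}, the hyponormality of $T_{\varphi}$.

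The step I expect to be genuinely delicate is this last one. The two limits $k\to\infty$ and $j\to\infty$ do not commute --- for fixed $k$ the right-hand side is negligible, yet it retains a nonzero $k^{-3}$ tail --- so the required domination is not uniform and cannot be read off from a single limiting argument. Converting it into the nonnegativity of the coefficients of $P_{j}(k)$, each of which is a polynomial in $j$ whose top-order term in $j$ one checks to be positive, trades the analytic difficulty for a finite algebraic one; the real work is to control all of these coefficients simultaneously, rather than only the extreme ones, so that a single explicit threshold $j(n,\delta)$ can be named.
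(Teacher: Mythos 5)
You follow the paper's route up to and including the finite ranges: the same specialization $m=n+\delta$, $i=j+\delta$, $a_{m,n}=1$, $a_{i,j}=\tfrac{1}{2j+\delta}$, the same observation that the middle range of Remark \ref{rem:Hyponormal + Co-Hyponormal} is empty, and a correct large-$j$ argument for the finitely many inequalities with $k\le 2\delta-1$. The gap is in the one step that carries the theorem, the inequality for all $k\ge 2\delta$, and the mechanism you propose there is false as stated. Since $i-j=m-n=\delta$, the denominator of $\widetilde{D}_{k}$ in \eqref{wtDk} contains the factor $(k+n-\delta+1)$, so every admissible common denominator $Q_{j}(k)$ contains it as well; in particular $Q_{j}(k)$ is \emph{not} positive on all of $[0,\infty)$ once $\delta\ge n+2$ (e.g.\ $n=1$, $\delta=3$: the factor is $k-1$), contrary to your normalization. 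Worse, the top-order-in-$j$ part of $P_{j}(k)$ comes entirely from the $\widetilde{A}_{k}$ term and equals $N_{A}(k)\,(k+n-\delta+1)$ times a polynomial with positive coefficients, where $N_{A}(k)=(m+n)\delta k+(\delta+1)(n+1)^{2}+(\delta-1)(m+1)^{2}$ is the $j$-independent numerator of $\widetilde{A}_{k}$. Hence for $\delta\ge n+2$ the constant coefficient of $P_{j}(k)$ is \emph{negative} for all large $j$: your claim that every coefficient of $P_{j}$ is a polynomial in $j$ with positive leading term fails, and no choice of $j$ makes all coefficients nonnegative. The defect is repairable by first substituting $k\mapsto k+2\delta$ (legitimate, since the inequality is needed only for $k\ge 2\delta$, and all linear factors then acquire positive constant terms), but even after that repair you do not carry out the verification that all intermediate coefficients become nonnegative for large $j$ --- you explicitly defer it as ``the real work'' --- so the heart of the theorem remains unproved.

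For contrast, the paper never clears denominators. It proves $(i+j)\widetilde{A}_{k}\ge(i+j)\widetilde{B}_{k}+\widetilde{C}_{k}+\widetilde{D}_{k}$ on $k\ge 2\delta$ by dominating numerators and denominators separately: the numerator of $(i+j)\widetilde{A}_{k}$ exceeds the sum of the other three numerators (inequalities \eqref{eq:Numerator k-terms} and \eqref{eq:Numerator constant terms}, which hold for every $j$), while the denominator $(k+m+1)^{2}(k+n+1)^{2}$ of $\widetilde{A}_{k}$ is made smaller than each of the other three denominators. Only the comparison against the denominator of $\widetilde{D}_{k}$ requires $j$ large, and it reduces to the single condition $j>q(k):=\frac{k(m+\delta)+mn+m+\delta}{k+n-\delta+1}$, a rational function bounded on $[2\delta,\infty)$; any $j$ exceeding $m$ and $\sup_{k\ge 2\delta}q(k)$ works. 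That splitting supplies exactly the uniform-in-$k$ control your coefficient bookkeeping was meant to provide. Either adopt it, or actually complete the shifted coefficient analysis; as written, the proposal does not establish the theorem.
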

\begin{proof}
The idea of the proof lies in specifying the words ``sufficiently
large\textquotedbl{} in Theorem \ref{thm:Hyponormal + Co-Hyponormal} in accordance with Remark \ref{rem:Hyponormal + Co-Hyponormal}.

We let $m=n+\delta$ and $i=j+\delta$. Since $m-n=i-j=\delta$, the
formulas from Remark \ref{rem:Hyponormal + Co-Hyponormal}
become somewhat simplified. Recall that we use $\widetilde{A}_{k}=\left|a_{m,n}\right|^{2}\left(\frac{m+k-n+1}{(m+k+1)^{2}}-\frac{n+k-m+1}{(n+k+1)^{2}}\right),$
as well as $\widetilde{B}_{k}=\left|a_{i,j}\right|^{2}\left(\frac{i+k-j+1}{(i+k+1)^{2}}-\frac{j+k-i+1}{(j+k+1)^{2}}\right).$
In particular, for $k\ge\delta$ and with $a_{m,n}=1$, we can arrive
at
\[
\widetilde{A}_{k}=\frac{(m+n)\delta k+(\delta+1)(n+1)^{2}+(\delta-1)(m+1)^{2}}{(k+m+1)^{2}(k+n+1)^{2}}.
\]
And, with $a_{i,j}=\frac{1}{i+j}$ we obtain 
\[
\widetilde{B}_{k}=\frac{(i+j)\delta k+(\delta+1)(j+1)^{2}+(\delta-1)(j+1)^{2}}{(i+j)^{2}(k+i+1)^{2}(k+j+1)^{2}}.
\]
Finally, we have 
\begin{align*}
\widetilde{C}_{k} & =\frac{\delta(i-m)(k+\delta+1)}{(k+m+1)(k+m+\delta+1)(k+i+1)(k+i+\delta+1)}\,,\text{ and}\\
\widetilde{D}_{k} & =\frac{\delta(i-m)(k-\delta+1)}{(k+m+1)(k+n-\delta+1)(k+j+1)(k+j-\delta+1)}\,.
\end{align*}

Recall that our aim is now to prove that for $k\ge2\delta$ we have
\begin{equation}
(i+j)\widetilde{A}_{k}\ge(i+j)\widetilde{B}_{k}+\widetilde{C}_{k}+\widetilde{D}_{k},\label{eq:Symbolic Inequality}
\end{equation}
since $a_{i,j}=\frac{1}{i+j}.$ This is a direct application of the
bounds given in Theorem \ref{thm:Hyponormal + Co-Hyponormal}.

Our goal will be to prove that the numerator of $(i+j)\widetilde{A}_{k}$
is larger than the sums of the numerators of $(i+j)\widetilde{B}_{k}$,
$\widetilde{C}_{k}$, and $\widetilde{D}_{k}$, while ensuring that
the denominator of $\widetilde{A}_{k}$ is smaller than each of the
denominators of $(i+j)\widetilde{B}_{k}$, $\widetilde{C}_{k}$, and
$\widetilde{D}_{k}$. If we can show this we will have shown that
\eqref{eq:Symbolic Inequality} holds for all $k\ge2\delta$, and
in fact, the other required bounds of Theorem \ref{thm:Hyponormal + Co-Hyponormal}
will also necessarily follow immediately, guaranteeing the hyponormality
of $T_{\varphi}$.

Looking first at the numerators then, we first wish to show 
\begin{equation}
(i+j)(m+n)\delta k\ge(2i-2m+1)k,\label{eq:Numerator k-terms}
\end{equation}
for all $k\ge2\delta$. Yet since clearly $(i+j)(m+n)\delta>(2i-2m+1)$,
we have that \eqref{eq:Numerator k-terms} holds for all $k\ge0$.
Looking at the constant terms of the numerators, and multiplying through
by $(i+j)$ to prevent a fraction in the constant term of the numerator
$\widetilde{B}_{k}$, it is equally clear that 
\begin{equation}
(i+j)^{2}\left[(\delta+1)(n+1)^{2}+(\delta-1)(m+1)^{2}\right]\ge(\delta+1)(j+1)^{2}+(\delta-1)(j+1)^{2}+2\delta(i-m)(i+j),\label{eq:Numerator constant terms}
\end{equation}
since the inequality 
\[
(i+j)^{2}\left[(\delta+1)+(\delta-1)\right]\ge(\delta+1)(j+1)^{2}+(\delta-1)(j+1)^{2},
\]
and the inequality 
\[
(i+j)^{2}\left[(\delta+1)(n^{2}+2n)+(\delta-1)(m^{2}+2m)\right]\ge2\delta(i-m)(i+j)
\]
both hold by inspection. So we have that the numerator of $\widetilde{A}_{k}$
is larger than the sums of the numerators of $(i+j)\widetilde{B}_{k}$,
$\widetilde{C}_{k}$, and $\widetilde{D}_{k}$, as desired.

It remains to show our desired inequalities for the denominators.
It is clear by inspection that if $j>m$, then we have that 
\[
(k+m+1)^{2}(k+n+1)^{2}\le(k+i+1)^{2}(k+j+1)^{2}
\]
and 
\[
(k+m+1)^{2}(k+n+1)^{2}\le(k+m+1)(k+m+\delta+1)(k+i+1)(k+i+\delta+1).
\]

We take a moment to show that it is possible to choose $j$ large
enough so that 
\begin{align}
(k+m+1)^{2}(k+n+1)^{2}<(k+m+1)(k+n-\delta+1)(k+j+1)(k+j-\delta+1)\label{blehblehbleh}
\end{align}
for all $k\ge2\delta$. Since we have already assumed that $j>m$, we have that $j-\delta>n$, and thus \eqref{blehblehbleh}
follows so long as 
\[
(k+m+1)(k+n+1)<(k+n-\delta+1)(k+j+1).
\]
Or equivalently, since $k\ge2\delta$, inequality \eqref{blehblehbleh}
follows so long as 
\[
j>\frac{(k+m+1)(k+n+1)}{k+n-\delta+1}-k-1=\frac{k(m+\delta)+mn+m+\delta}{k+n-\delta+1}=:q(k).
\]
Since the rational function $q(k)$ remains bounded for $k\in[2\delta,\infty)$,
it is possible to choose an appropriate $j\in\mathbb{N}$. Thus \eqref{eq:Symbolic Inequality}
holds for all $k\ge2\delta$.

The same argument will show that $(i+j)\widetilde{A}_{k}\ge(i+j)\widetilde{B}_{k}+\widetilde{C}_{k}$
holds for $\delta\le k\le2\delta$. The required bounds for $k<\delta$
hold trivially.

Thus, by Theorem \ref{thm:Hyponormal + Co-Hyponormal}, operator $T_{\varphi}$
is hyponormal.
\end{proof}

\section{Polynomials of Fixed Relative Degree\label{sec:Polynomials of Fixed Relative Degree}}

We now turn to operators whose symbol is a polynomial of the form
$$
\varphi(z)=a_{1}z^{m_{1}}\bar{z}^{n_{1}}+\ldots+a_{k}z^{m_{k}}\bar{z}^{n_{k}},\qquad
\text{with }m_{1}-n_{1}=\ldots=m_{k}-n_{k}=\delta\geq0.
$$
We shall call
these \emph{polynomials of fixed relative degree.} Though working
with non-harmonic symbols can be difficult, some results are known
in these special cases. One which we will be interested in for this
paper is due to Y.~Liu and C.~Lu in \cite[Theorem 3.1]{Liu-Lu}. There they
make use of the \emph{Mellin transform }of $\varphi.$
\begin{defn*}
Suppose $\varphi\in L^{1}\left(\left[0,1\right],rdr\right)$. For
$\mathrm{Re}\:z\geq2$, the \emph{Mellin transform} of $\varphi$,
is given by 
\[
\widehat{\varphi}(z):=\int_{0}^{1}\varphi(x)x^{z-1}dx.
\]
\end{defn*}
For $\varphi(re^{i\theta})=e^{ik\theta}\varphi_{0}(r),$ with $k\in\mathbb{Z}$
and $\varphi_{0}$ radial, we can compute the action of $T_{\varphi}$
on $z^{n}$. Specifically, 
\[
T_{\varphi}z^{n}=\begin{cases}
2\left(n+k+1\right)\widehat{\varphi}_{0}(2n+k+2)z^{n+k} \quad& n+k\geq0\\
0 & n+k<0,
\end{cases}
\]
and 
\[
T_{\bar{\varphi}}z^{n}=\begin{cases}
2\left(n-k+1\right)\widehat{\varphi}_{0}(2n-k+2)z^{n-k} \quad& n-k\geq0\\
0 & n-k<0.
\end{cases}
\]
Using this, Y.~Liu and C.~Lu proved the following theorem in \cite[Theorem 3.1]{Liu-Lu}. 
\begin{thm}
\label{thm:Liu-Lu Theorem}Let $\varphi(re^{i\theta})=e^{i\delta\theta}\varphi_{0}(r)\in L^{\infty}(\mathbb{D}),$
where $\delta\in\mathbb{Z}$ and $\varphi_{0}$ is radial. Then $T_{\varphi}$
is hyponormal if and only if one of the following conditions holds: 
\begin{itemize}
\item[1)] $\delta=0$ and $\varphi_{0}\equiv0$; 
\item[2)] $\delta=0$; 
\item[3)] $\delta>0$ and for each $\alpha\geq\delta$, 
\[
\left|\widehat{\varphi}_{0}(2\alpha+\delta+2)\right|\geq c_{\alpha,\delta}\left|\widehat{\varphi}_{0}(2\alpha-\delta+2)\right|,
\]
where we abbreviate 
\[
c_{\alpha,\delta}:=\sqrt{\frac{\alpha-\delta+1}{\alpha+\delta+1}}.
\]
\end{itemize}
\end{thm}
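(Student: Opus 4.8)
The plan is to reduce the operator inequality $[T_{\varphi}^{*},T_{\varphi}]\ge 0$ to a scalar, term-by-term comparison. Since $T_{\varphi}^{*}=T_{\bar\varphi}$, hyponormality is equivalent to $\|T_{\varphi}u\|^{2}\ge\|T_{\bar\varphi}u\|^{2}$ for every $u\in A^{2}(\mathbb{D})$. First I would feed the normalized monomials $e_{n}:=\sqrt{(n+1)/\pi}\,z^{n}$ (the orthonormal basis) into the two action formulas stated just before the theorem. A direct substitution turns both operators into \emph{weighted shifts}:
\[
T_{\varphi}e_{n}=w_{n}\,e_{n+\delta},\qquad T_{\bar\varphi}e_{n}=v_{n}\,e_{n-\delta},
\]
with $w_{n}=2\sqrt{(n+1)(n+\delta+1)}\,\widehat{\varphi}_{0}(2n+\delta+2)$ when $n+\delta\ge0$ and $v_{n}=2\sqrt{(n+1)(n-\delta+1)}\,\widehat{\varphi}_{0}(2n-\delta+2)$ when $n-\delta\ge0$, together with the convention $w_{n}=0$ (resp.\ $v_{n}=0$) whenever the relevant shifted index is negative.

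The key structural observation is that a shift by $\pm\delta$ sends distinct basis vectors to \emph{orthogonal} images, so for $u=\sum_{n}c_{n}e_{n}$ the two norms diagonalize: $\|T_{\varphi}u\|^{2}=\sum_{n}|c_{n}|^{2}|w_{n}|^{2}$ and $\|T_{\bar\varphi}u\|^{2}=\sum_{n}|c_{n}|^{2}|v_{n}|^{2}$. Hence
\[
\langle[T_{\varphi}^{*},T_{\varphi}]u,u\rangle=\sum_{n}\bigl(|w_{n}|^{2}-|v_{n}|^{2}\bigr)|c_{n}|^{2},
\]
and this is nonnegative for every $(c_{n})\in\ell^{2}$ \emph{if and only if} $|w_{n}|^{2}\ge|v_{n}|^{2}$ for each $n\ge0$. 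This single equivalence is the whole engine of the proof; everything else is reading off what the pointwise inequality says in each sign regime of $\delta$.

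I would then split into three cases. When $\delta=0$ the operator is diagonal, $T_{\varphi}e_{n}=2(n+1)\widehat{\varphi}_{0}(2n+2)e_{n}$, hence normal and in particular hyponormal with no condition on $\varphi_{0}$ (case 2). When $\delta>0$ the indices $n<\delta$ give $v_{n}=0$, so those inequalities hold automatically; for $n\ge\delta$, cancelling the common factor $4(n+1)$ from $|w_{n}|^{2}\ge|v_{n}|^{2}$ leaves exactly $(n+\delta+1)|\widehat{\varphi}_{0}(2n+\delta+2)|^{2}\ge(n-\delta+1)|\widehat{\varphi}_{0}(2n-\delta+2)|^{2}$, which rearranges into case 3 with $\alpha=n$ and the stated constant $c_{\alpha,\delta}$. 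In the degenerate regime $\delta<0$ (case 1), the low-degree vanishing of $w_{n}$ forces $v_{n}=0$ on a starting block, and the magnitude-decreasing recursion coming from $|w_{n}|^{2}\ge|v_{n}|^{2}$ then propagates this to make $\widehat{\varphi}_{0}$ vanish along an entire arithmetic progression of integers.

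The main obstacle is precisely this last step: concluding $\varphi_{0}\equiv0$ from the vanishing of $\widehat{\varphi}_{0}$ on a progression. I would handle it by the substitution $t=x^{2}$, which converts the identities $\widehat{\varphi}_{0}(p+2+2k)=\int_{0}^{1}\varphi_{0}(x)x^{p+1+2k}\,dx=0$ for all $k\ge0$ (writing $\delta=-p$) into the statement that the $L^{1}$ function $t\mapsto\varphi_{0}(\sqrt{t})\,t^{p/2}$ has all its moments equal to zero; density of polynomials then forces that function, and hence $\varphi_{0}$, to vanish almost everywhere. The $\delta=0$ and $\delta>0$ cases are pure bookkeeping once the weighted-shift reduction is in hand, so the only place requiring genuine analytic input --- injectivity of the Mellin transform via the moment problem --- is this $\delta<0$ regime.
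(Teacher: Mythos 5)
The paper itself contains no proof of this statement---it is quoted verbatim from Lu and Liu \cite[Theorem 3.1]{Liu-Lu}, with the Mellin-transform action formulas recorded immediately before it---so there is no internal proof to compare against. Judged on its own, your argument is correct, and it is exactly the proof those action formulas invite: passing to the orthonormal basis turns $T_{\varphi}$ and $T_{\bar\varphi}$ into weighted shifts by $\pm\delta$, the quadratic form of $[T_{\varphi}^{*},T_{\varphi}]$ diagonalizes as $\sum_{n}\bigl(|w_{n}|^{2}-|v_{n}|^{2}\bigr)|c_{n}|^{2}$, and positivity is equivalent to the pointwise comparison $|w_{n}|\ge|v_{n}|$; the three regimes of $\delta$ then unpack as you say, with the substitution $t=x^{2}$ and the vanishing-moments/Weierstrass argument correctly closing the $\delta<0$ case (your induction does cover all residues modulo $|\delta|$, so $\widehat{\varphi}_{0}$ vanishes on the full progression, not just on one residue class).

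One point worth making explicit: as printed in the paper, condition 1) reads ``$\delta=0$ and $\varphi_{0}\equiv0$,'' which is subsumed by 2) and leaves $\delta<0$ entirely uncovered---the literal statement is then false, since $\varphi_{0}\equiv0$ with $\delta<0$ gives a (trivially) hyponormal operator satisfying none of 1)--3). Your reading of case 1) as ``$\delta<0$ and $\varphi_{0}\equiv0$'' is the intended statement; it is the version consistent with the paper's own follow-up remark that a symbol each of whose terms has strictly higher degree in $\bar z$ than in $z$ never yields a hyponormal operator, and it is precisely what your $\delta<0$ analysis proves. So your proposal not only fills in the omitted proof but silently repairs a typo in the quoted theorem.
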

The first situation immediately implies that if $\varphi(z)$ is a
polynomial in $z$ and $\bar{z}$ where the degree of $\bar{z}$ is
larger than the degree of $z$ in each term, then $T_{\varphi}$ cannot
be hyponormal. The second situation is a consequence of the fact that
whenever $\varphi$ is real valued in $\mathbb{D}$, then $T_{\varphi}$
is actually self-adjoint and thus trivially hyponormal. The final
situation, when $\delta>0$, will be of interest to us. 
\begin{rem*}
One can prove Theorem \ref{thm:Hyponormal monomials} by applying
Theorem \ref{thm:Liu-Lu Theorem}, however the proof is non-trivial.
\end{rem*}
The following is a corollary of Theorem \ref{thm:Same Degree Polynomial2}.
However, a direct proof is simple enough that we showcase it here
for the convenience of the reader. 
\begin{cor}
\label{thm:Same degree polynomial1}Let $\varphi(z)=a_{1}z^{m_{1}}\bar{z}^{n_{1}}+\ldots+a_{k}z^{m_{k}}\bar{z}^{n_{k}}$,
with $m_{1}-n_{1}=\ldots=m_{k}-n_{k}=\delta\geq0$, and $a_{i}$ all
lying along the same ray for $1\leq i\leq k$ (i.e. $\mathrm{arg}(a_{1})=\ldots=\mathrm{arg}\left(a_{k}\right)$),
then $T_{\varphi}$ is hyponormal. 
\end{cor}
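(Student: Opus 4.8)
The plan is to reduce hyponormality to a family of scalar inequalities indexed by the monomial degree, and then to deduce each from the single-term result of Theorem \ref{thm:Hyponormal monomials} by exploiting the common argument of the coefficients. First I would normalize. Since all $a_i$ share the same argument, write $a_i = e^{i\theta}|a_i|$ with $\theta$ fixed, so that $\varphi = e^{i\theta}\psi$ where $\psi$ has nonnegative coefficients. Because $T_{e^{i\theta}\psi} = e^{i\theta}T_\psi$ and $T_{e^{i\theta}\psi}^{*} = e^{-i\theta}T_\psi^{*}$, the unimodular factor cancels in the self-commutator, giving $[T_\varphi^{*},T_\varphi] = [T_\psi^{*},T_\psi]$. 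Hence we may assume from the outset that every $a_i \geq 0$.

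Next I would compute the action of $T_\varphi$ and $T_\varphi^{*} = T_{\bar\varphi}$ on the monomials $z^k$ via the projection formula $P(z^m\bar z^n) = \frac{m-n+1}{m+1}z^{m-n}$ already quoted. Because all terms have the same relative degree $\delta$, the operator $T_\varphi$ sends $z^k \mapsto \lambda_k z^{k+\delta}$ and $T_\varphi^{*}$ sends $z^k \mapsto \mu_k z^{k-\delta}$ (and to $0$ when $k < \delta$), with $\lambda_k = (\delta+k+1)\sum_i \frac{a_i}{m_i+k+1}$ and $\mu_k = (k-\delta+1)\sum_i \frac{a_i}{n_i+k+1}$. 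The decisive structural point is that both operators act as weighted shifts by $\pm\delta$ steps, so the images of distinct $z^k$ remain mutually orthogonal. Consequently the self-commutator form diagonalizes: writing $u = \sum_k u_k z^k$ and using $\|z^j\|^2 = \pi/(j+1)$, I obtain
\[
\langle [T_\varphi^{*},T_\varphi]u, u\rangle = \sum_{k=0}^\infty \frac{\pi\,|\lambda_k|^2}{k+\delta+1}\,|u_k|^2 - \sum_{k=\delta}^\infty \frac{\pi\,|\mu_k|^2}{k-\delta+1}\,|u_k|^2 .
\]
Thus $T_\varphi$ is hyponormal if and only if $\frac{|\lambda_k|^2}{k+\delta+1} \geq \frac{|\mu_k|^2}{k-\delta+1}$ for every $k \geq \delta$, the terms with $k<\delta$ being automatically nonnegative.

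Finally I would establish each of these scalar inequalities. After cancelling common factors, the inequality for index $k$ reads $(\delta+k+1)\left(\sum_i \frac{a_i}{m_i+k+1}\right)^2 \geq (k-\delta+1)\left(\sum_i \frac{a_i}{n_i+k+1}\right)^2$. Here is where the hypothesis that the $a_i$ lie on one ray is used: after the normalization every summand is nonnegative, so the two sums are nonnegative reals and I may take square roots. For a single monomial, Theorem \ref{thm:Hyponormal monomials} (equivalently, positivity of \eqref{eq:Monomial Formula 2}) gives the term-by-term bound $\sqrt{\delta+k+1}\,\frac{a_i}{m_i+k+1} \geq \sqrt{k-\delta+1}\,\frac{a_i}{n_i+k+1}$; summing these nonnegative inequalities over $i$ and squaring yields exactly the desired aggregate inequality. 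The main obstacle, and the reason the hypothesis cannot simply be dropped, is precisely this summing-then-squaring step: for coefficients with differing arguments, $\left|\sum_i \frac{a_i}{m_i+k+1}\right|$ can only be controlled via the triangle inequality, which points the wrong way, and the cross terms in the squared sums are genuinely sign-indefinite. This is consistent with the paper's broader theme that for non-harmonic symbols the arguments of the coefficients matter.
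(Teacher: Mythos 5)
Your proof is correct, and it reaches the result by a more self-contained route than the paper does. The paper proves this corollary by invoking the Liu--Lu characterization (Theorem \ref{thm:Liu-Lu Theorem}): it decomposes $\varphi$ into monomial pieces $a_{i}e^{i\delta\theta}\varphi_{0,i}(r)$, uses Theorem \ref{thm:Hyponormal monomials} together with the necessity half of Liu--Lu to obtain the Mellin-transform inequality $\left|a_{i}\widehat{\varphi}_{0,i}(2\alpha+\delta+2)\right|\geq c_{\alpha,\delta}\left|a_{i}\widehat{\varphi}_{0,i}(2\alpha-\delta+2)\right|$ for each term, sums these using the ray hypothesis (modulus of the sum equals the sum of the moduli), and concludes by the sufficiency half of Liu--Lu. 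You instead bypass the Mellin machinery entirely: the rotation normalization $[T_{e^{i\theta}\psi}^{*},T_{e^{i\theta}\psi}]=[T_{\psi}^{*},T_{\psi}]$, the observation that fixed relative degree makes $T_{\varphi}$ and $T_{\varphi}^{*}$ weighted shifts by $\pm\delta$, and the resulting diagonalization of the self-commutator in the monomial basis amount to re-deriving by hand exactly the special case of Liu--Lu that is needed here --- indeed, for $\varphi_{0,i}(r)=r^{m_{i}+n_{i}}$ one has $\widehat{\varphi}_{0,i}(2k+\delta+2)=\frac{1}{2(k+m_{i}+1)}$, so your scalar inequalities are precisely the Liu--Lu conditions with $\alpha=k$. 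The decisive combining step --- the term-by-term square-root inequality coming from Theorem \ref{thm:Hyponormal monomials} (positivity of \eqref{eq:Monomial Formula 2}), summed over $i$ using nonnegativity and then squared --- is the same in substance as the paper's. What your version buys is independence from the citation to \cite{Liu-Lu} and an explicit necessary-and-sufficient diagonal criterion for all fixed-relative-degree polynomial symbols; what the paper's version buys is brevity and a template that carries over directly to Theorems \ref{thm:Same Relative Degree Binomial} and \ref{thm:Same Degree Polynomial2}, which also lean on the Liu--Lu criterion.
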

\begin{proof}
Write $\varphi=\varphi_{1}+\ldots+\varphi_{k}$, where $\varphi_{i}=a_{i}e^{i\delta\theta}\varphi_{0,i}(r)$.
Recall that $c_{\alpha,\delta}=\sqrt{\frac{\alpha-\delta+1}{\alpha+\delta+1}}$.
By Theorem \ref{thm:Liu-Lu Theorem} and Theorem \ref{thm:Hyponormal monomials},
we have that for each $\alpha\geq\delta$ 
\[
\left|a_{i}\widehat{\varphi}_{0,i}(2\alpha+\delta+2)\right|\geq c_{\alpha,\delta}\left|a_{i}\widehat{\varphi}_{0,i}(2\alpha-\delta+2)\right|.
\]

Since the $a_{i}'s$ all lie along the same ray, we have that for
each $n\geq\delta$ 
\[
\left|\sum_{i=1}^{k}a_{i}\widehat{\varphi}_{0,i}(2\alpha+\delta+2)\right|=\sum_{i=1}^{k}\left|a_{i}\right|\left|\widehat{\varphi}_{0,i}(2\alpha+\delta+2)\right|\geq\sum_{i=1}^{k}c_{\alpha,\delta}\left|a_{i}\right|\left|\widehat{\varphi}_{0,i}(2\alpha-\delta+2)\right|.
\]
The claim now follows by Theorem \ref{thm:Liu-Lu Theorem}. 
\end{proof}
One is tempted to conjecture that the argument of these coefficients
should not matter. However the following example shows that this is
not the case. 
\begin{example}
 Let $\varphi(z)=z^{2}\bar{z}-z^{3}\bar{z}^{2}$. Then $\widehat{\varphi}_{0}(k)=\frac{1}{k+3}-\frac{1}{k+5}$,
and we find that 
\[
\frac{1}{2\alpha+6}-\frac{1}{2\alpha+8}<\sqrt{\frac{\alpha}{\alpha+2}}\left(\frac{1}{2\alpha+4}-\frac{1}{2\alpha+6}\right),
\]
whenever $\alpha\geq2.$ This violates the conditions of Theorem \ref{thm:Liu-Lu Theorem},
and so $T_{\varphi}$ cannot be hyponormal.
 
\end{example}
So, can we find sufficient conditions, beyond all coefficients lying
along the same ray, to guarantee that such functions yield hyponormal
operators? The answer is yes, and depends somewhat on the number of
terms, as well as the relative position of the coefficients, as the following two theorems demonstrate. 
\begin{thm}
\label{thm:Same Relative Degree Binomial}Let $\varphi(z)=a_{1}z^{m}\bar{z}^{n}+a_{2}z^{i}\bar{z}^{j}$,
with $m-n=i-j=\delta\geq0$. Then $T_{\varphi}$ is hyponormal if
$a_{1}$ and $a_{2}$ lie in the same quarter-plane (i.e.~$\left|\mathrm{arg}\left(a_{1}\right)-\mathrm{arg}\left(a_{2}\right)\right|\leq\frac{\pi}{2}$).
Further, under the additional condition that
\begin{align}\label{Condition}
0\le\frac{\left|a_{1}\right|}{\alpha+m+1}-\frac{\left|a_{2}\right|}{\alpha+i+1}<c_{\alpha,\delta}^{2}\left(\frac{\left|a_{1}\right|}{\alpha+n+1}-\frac{\left|a_{2}\right|}{\alpha+j+1}\right)
\qquad
\text{for all }\alpha,
\end{align}
the requirement that $\left|\mathrm{arg}\left(a_{1}\right)-\mathrm{arg}\left(a_{2}\right)\right|\leq\frac{\pi}{2}$
is also necessary for the hyponormality of $T_{\varphi}.$
\end{thm}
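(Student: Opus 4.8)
The plan is to route everything through the Mellin-transform criterion of Theorem \ref{thm:Liu-Lu Theorem}. First I would write $\varphi(re^{i\theta})=e^{i\delta\theta}\varphi_{0}(r)$ with $\varphi_{0}(r)=a_{1}r^{m+n}+a_{2}r^{i+j}$, so that $\widehat{\varphi}_{0}(z)=\frac{a_{1}}{m+n+z}+\frac{a_{2}}{i+j+z}$. The identities $m+n+\delta=2m$, $m+n-\delta=2n$, $i+j+\delta=2i$, $i+j-\delta=2j$ (which use $m-n=i-j=\delta$) collapse the two relevant transform values to
\[
\widehat{\varphi}_{0}(2\alpha+\delta+2)=\frac{1}{2}\left(\frac{a_{1}}{\alpha+m+1}+\frac{a_{2}}{\alpha+i+1}\right),\qquad \widehat{\varphi}_{0}(2\alpha-\delta+2)=\frac{1}{2}\left(\frac{a_{1}}{\alpha+n+1}+\frac{a_{2}}{\alpha+j+1}\right).
\]
For $\delta=0$ the operator is self-adjoint, hence trivially hyponormal, so I may assume $\delta>0$ and invoke part (3) of Theorem \ref{thm:Liu-Lu Theorem}: $T_{\varphi}$ is hyponormal if and only if $|\widehat{\varphi}_{0}(2\alpha+\delta+2)|\ge c_{\alpha,\delta}|\widehat{\varphi}_{0}(2\alpha-\delta+2)|$ for every $\alpha\ge\delta$.

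Next I would square this inequality and expand each modulus via $|a_{1}p+a_{2}q|^{2}=|a_{1}|^{2}p^{2}+|a_{2}|^{2}q^{2}+2\,\mathrm{Re}(a_{1}\overline{a_{2}})\,pq$, writing $\mathrm{Re}(a_{1}\overline{a_{2}})=|a_{1}||a_{2}|\cos\Delta$ with $\Delta=\mathrm{arg}(a_{1})-\mathrm{arg}(a_{2})$. Abbreviating $s_{1}=\frac{1}{\alpha+m+1}$, $s_{2}=\frac{1}{\alpha+n+1}$, $t_{1}=\frac{1}{\alpha+i+1}$, $t_{2}=\frac{1}{\alpha+j+1}$, the criterion becomes: for all $\alpha\ge\delta$,
\[
X_{\alpha}+Y_{\alpha}+2|a_{1}||a_{2}|\cos\Delta\,Z_{\alpha}\ge0,
\]
where $X_{\alpha}=|a_{1}|^{2}(s_{1}^{2}-c_{\alpha,\delta}^{2}s_{2}^{2})$, $Y_{\alpha}=|a_{2}|^{2}(t_{1}^{2}-c_{\alpha,\delta}^{2}t_{2}^{2})$, and $Z_{\alpha}=s_{1}t_{1}-c_{\alpha,\delta}^{2}s_{2}t_{2}$. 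Applying Theorem \ref{thm:Hyponormal monomials} (equivalently, Theorem \ref{thm:Liu-Lu Theorem} to each single monomial) gives $s_{1}\ge c_{\alpha,\delta}s_{2}$ and $t_{1}\ge c_{\alpha,\delta}t_{2}$ for $\alpha\ge\delta$; hence $X_{\alpha},Y_{\alpha}\ge0$, and multiplying the two monomial inequalities yields $Z_{\alpha}\ge0$. This already settles sufficiency: when $\cos\Delta\ge0$, i.e. $|\mathrm{arg}(a_{1})-\mathrm{arg}(a_{2})|\le\frac{\pi}{2}$, every summand is nonnegative, so the criterion holds at each $\alpha$ and $T_{\varphi}$ is hyponormal.

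For the necessity I would argue contrapositively: assuming $\cos\Delta<0$, produce an $\alpha\ge\delta$ at which the displayed inequality fails. Since $Z_{\alpha}>0$, the criterion at a given $\alpha$ is equivalent to $\cos\Delta\ge-\frac{X_{\alpha}+Y_{\alpha}}{2|a_{1}||a_{2}|Z_{\alpha}}$, so hyponormality is equivalent to $\cos\Delta\ge-\inf_{\alpha\ge\delta}\frac{X_{\alpha}+Y_{\alpha}}{2|a_{1}||a_{2}|Z_{\alpha}}$. The bridge to hypothesis \eqref{Condition} is the perfect-square identity
\[
X_{\alpha}+Y_{\alpha}-2|a_{1}||a_{2}|Z_{\alpha}=(|a_{1}|s_{1}-|a_{2}|t_{1})^{2}-c_{\alpha,\delta}^{2}(|a_{1}|s_{2}-|a_{2}|t_{2})^{2},
\]
whose right-hand side is exactly $A_{\alpha}^{2}-c_{\alpha,\delta}^{2}B_{\alpha}^{2}$ with $A_{\alpha}=|a_{1}|s_{1}-|a_{2}|t_{1}$ and $B_{\alpha}=|a_{1}|s_{2}-|a_{2}|t_{2}$ the two sides of \eqref{Condition}. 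Condition \eqref{Condition} reads $0\le A_{\alpha}<c_{\alpha,\delta}^{2}B_{\alpha}\le c_{\alpha,\delta}B_{\alpha}$, which forces $A_{\alpha}^{2}-c_{\alpha,\delta}^{2}B_{\alpha}^{2}<0$, i.e. $X_{\alpha}+Y_{\alpha}<2|a_{1}||a_{2}|Z_{\alpha}$; thus the ratio $\frac{X_{\alpha}+Y_{\alpha}}{2|a_{1}||a_{2}|Z_{\alpha}}$ is strictly below $1$ at every $\alpha$.

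The main obstacle is the final quantitative step: to conclude that the \emph{infimum} of this ratio is $0$ (and hence that $\cos\Delta\ge0$ is genuinely forced), one must show that \eqref{Condition} drives $\frac{X_{\alpha}+Y_{\alpha}}{2|a_{1}||a_{2}|Z_{\alpha}}$ arbitrarily close to $0$ along a suitable sequence of $\alpha$. I expect this to reduce, after clearing denominators, to comparing two explicit polynomials in $\alpha$, namely the numerators of $A_{\alpha}^{2}-c_{\alpha,\delta}^{2}B_{\alpha}^{2}$ and of the companion quantity $X_{\alpha}+Y_{\alpha}+2|a_{1}||a_{2}|Z_{\alpha}=(|a_{1}|s_{1}+|a_{2}|t_{1})^{2}-c_{\alpha,\delta}^{2}(|a_{1}|s_{2}+|a_{2}|t_{2})^{2}$, and the delicate point is to convert the strict gap imposed by \eqref{Condition} at each $\alpha$ into the vanishing of the ratio in the limit. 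This rational-function bookkeeping, rather than any conceptual difficulty, is where the real work lies; once it is in hand, the contrapositive is immediate and the quarter-plane condition is seen to be sufficient in general and, under \eqref{Condition}, necessary.
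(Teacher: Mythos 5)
Your sufficiency argument is correct and complete, and it is essentially the paper's own proof: reduce to the criterion of Theorem \ref{thm:Liu-Lu Theorem}, expand the squared moduli, and use single-monomial hyponormality to get $X_{\alpha},Y_{\alpha},Z_{\alpha}\ge0$, so that $\cos\Delta\ge0$ finishes the argument (the paper organizes the same algebra as a real/imaginary-part split rather than your $\cos\Delta$ expansion). The necessity half, however, is a plan rather than a proof: you explicitly defer the one step that carries all the content, namely that \eqref{Condition} forces $\inf_{\alpha\ge\delta}\frac{X_{\alpha}+Y_{\alpha}}{2|a_{1}||a_{2}|Z_{\alpha}}=0$. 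What you do establish --- that \eqref{Condition} makes this ratio $<1$ at every $\alpha$ --- only rules out hyponormality for $\cos\Delta=-1$; to exclude \emph{every} $\theta$ with $\cos\theta<0$ you need the ratio to be arbitrarily small at some $\alpha$, and nothing in your argument addresses this.

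Moreover, the deferred step is not merely missing; it is false as stated. Put each quantity over a common denominator, with $m=n+\delta$, $i=j+\delta$, $x=\alpha+1$, $\delta>0$. Then
\[
(x+\delta)(x+n)^{2}-(x-\delta)(x+m)^{2}=\delta(m+n)x+\delta\left(n^{2}+m^{2}\right),
\]
and the analogous identities for the other two numerators give
\[
X_{\alpha}\sim|a_{1}|^{2}\delta(m+n)\alpha^{-4},\qquad Y_{\alpha}\sim|a_{2}|^{2}\delta(i+j)\alpha^{-4},\qquad Z_{\alpha}\sim\delta\,\frac{(m+n)+(i+j)}{2}\,\alpha^{-4},
\]
so that
\[
\frac{X_{\alpha}+Y_{\alpha}}{2|a_{1}||a_{2}|Z_{\alpha}}\longrightarrow\frac{|a_{1}|^{2}(m+n)+|a_{2}|^{2}(i+j)}{|a_{1}||a_{2}|\left[(m+n)+(i+j)\right]}\ge\frac{2\sqrt{(m+n)(i+j)}}{(m+n)+(i+j)}>0.
\]
Since the ratio is positive and continuous on $[\delta,\infty)$ with a strictly positive limit, its infimum is strictly positive for every fixed symbol, so the contrapositive can never be closed along your route. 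Be aware that this is also the delicate point in the paper's own proof: after the two-circle construction it solves $F_{\alpha}(\theta_{\alpha})=0$ and asserts $\theta_{\alpha}=\arccos\left(\mathcal{O}(\alpha^{5})/(2r_{2}(1+r_{2}^{2})\alpha^{7})\right)\rightarrow\frac{\pi}{2}$, which is exactly the claim that the ratio above tends to $0$ and conflicts with the expansion just given. What ultimately protects the stated theorem is that hypothesis \eqref{Condition}, imposed for \emph{all} $\alpha$, is never satisfiable when $\delta>0$ and the monomials are distinct: as $\alpha\rightarrow\infty$ the left inequality forces $|a_{1}|\ge|a_{2}|$, the right inequality forces $|a_{1}|\le|a_{2}|$ (after cross-multiplying, the difference has leading term $(|a_{1}|-|a_{2}|)\delta\alpha^{3}$), and the borderline case $|a_{1}|=|a_{2}|$ fails by the positivity of the linear polynomial displayed above. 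So the necessity assertion is vacuously true; any correct proof must either exploit that vacuousness or weaken \eqref{Condition} to a bounded range of $\alpha$ and argue at finite $\alpha$, and your plan, which genuinely needs the infimum to vanish, does neither.
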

\begin{proof}
We begin with some general observations.
Without loss of generality, we may assume that $a_{1}$ is a positive
real number and that $a_{2}=r_{2}e^{i\theta}$ with 
$\frac{-\pi}{2}\leq\theta\le\frac{\pi}{2}$. We have 
$\widehat{\varphi}_{0}(k)=\frac{a_{1}}{m+n+k}+\frac{a_{2}}{i+j+k}$. 
Recall that $c_{\alpha,\delta}=\sqrt{\frac{\alpha-\delta+1}{\alpha+\delta+1}}$.
By Theorem \ref{thm:Liu-Lu Theorem}, $T_{\varphi}$ will be hyponormal
if and only if 
\begin{align}
 & \left|\widehat{\varphi}_{0}(2\alpha+\delta+2)\right|^{2}\geq c_{\alpha,\delta}^{2}\left|\widehat{\varphi}_{0}(2\alpha-\delta+2)\right|^{2},\nonumber
 \end{align}
which is equivalent to
\begin{align*}
& \left|\frac{a_{1}}{\alpha+m+1}+\frac{a_{2}}{\alpha+i+1}\right|^{2}\geq c_{\alpha,\delta}^{2}\left|\frac{a_{1}}{\alpha+n+1}+\frac{a_{2}}{\alpha+j+1}\right|^{2}, 
\end{align*}
as well as to
\begin{align}
 &\quad \left(\frac{a_{1}}{\alpha+m+1}+\frac{r_{2}\cos\theta}{\alpha+i+1}\right)^{2}+\frac{r_{2}^{2}\sin^{2}\theta}{\alpha+i+1}\nonumber \\
&\ge  c_{\alpha,\delta}^{2}\left[\left(\frac{a_{1}}{\alpha+n+1}+\frac{r_{2}\cos\theta}{\alpha+j+1}\right)^{2}+\frac{r_{2}^{2}\sin^{2}\theta}{\left(\alpha+j+1\right)^{2}}\right],\label{eq:Two-term bound}
\end{align}
for all $\alpha\ge\delta$.

Let us focus on proving the first statement.
By the hypothesis that $i=\delta+j$, we can verify
\[
\frac{r_{2}^{2}\sin^{2}\theta}{\left(\alpha+i+1\right)^{2}}\ge c_{\alpha,\delta}^{2}\frac{r_{2}^{2}\sin^{2}\theta}{\left(\alpha+j+1\right)^{2}}
\]
for all $\alpha\ge\delta$. Similarly, 
\[
\left(\frac{a_{1}}{\alpha+m+1}+\frac{r_{2}\cos\theta}{\alpha+i+1}\right)^{2}\ge c_{\alpha,\delta}^{2}\left(\frac{a_{1}}{\alpha+n+1}+\frac{r_{2}\cos\theta}{\alpha+j+1}\right)^{2}
\]
so long as $\cos\theta\ge0$. That is, when $a_{2}$ is in
the closed right half-plane. Thus, it follows that when $\left|\mathrm{arg}\left(a_{1}\right)-\mathrm{arg}\left(a_{2}\right)\right|\leq\frac{\pi}{2}$,
then the estimate in equation \eqref{eq:Two-term bound} holds for all $\alpha\geq\delta$.
And so $T_{\varphi}$ is hyponormal by Theorem \ref{thm:Liu-Lu Theorem}. 

To show the converse, we assume the extra condition \eqref{Condition}. We will show that if $\frac{\pi}{2}<\theta\le\pi,$
then there exists an $\alpha$ for which \eqref{eq:Two-term bound}
fails, and consequently $T_{\varphi}$ must fail to be hyponormal
by Theorem \ref{thm:Liu-Lu Theorem}.

First, fix $\alpha\ge\delta.$ We construct two circles. $$C_{1}:=\left\{ z:\;\left|z-\frac{a_{1}}{\alpha+m+1}\right|=\frac{r_{2}}{\alpha+i+1}\right\}, $$
centered at $\frac{a_{1}}{\alpha+m+1}$ with radius $\frac{r_{2}}{\alpha+i+1}$,
and $$C_{2}:=\left\{ z:\;\left|z-c_{\alpha,\delta}^{2}\frac{a_{1}}{\alpha+n+1}\right|=c_{\alpha,\delta}^{2}\frac{r_{2}}{\alpha+j+1}\right\}, $$
centered at $c_{\alpha,\delta}^{2}\frac{a_{1}}{\alpha+n+1}$ with
radius $c_{\alpha,\delta}^{2}\frac{r_{2}}{\alpha+j+1}.$ Without loss of generality, we may always assume that both of these circles lie in the right half-plane.

So long as the difference of their centers is bounded by the difference of their radii, i.e.
$$
\frac{a_{1}}{\alpha+m+1}-c_{\alpha,\delta}^{2}\frac{a_{1}}{\alpha+n+1}<\frac{r_{2}}{\alpha+i+1}-c_{\alpha,\delta}^{2}\frac{r_{2}}{\alpha+j+1},
$$
we have that $C_{2}$ lies completely in the region bounded by $C_{2}$. Such a scenario is illustrated in Figure \ref{FigInside} for one value of $\alpha = 6$.

\begin{figure}[h]
    \includegraphics[scale=0.6]{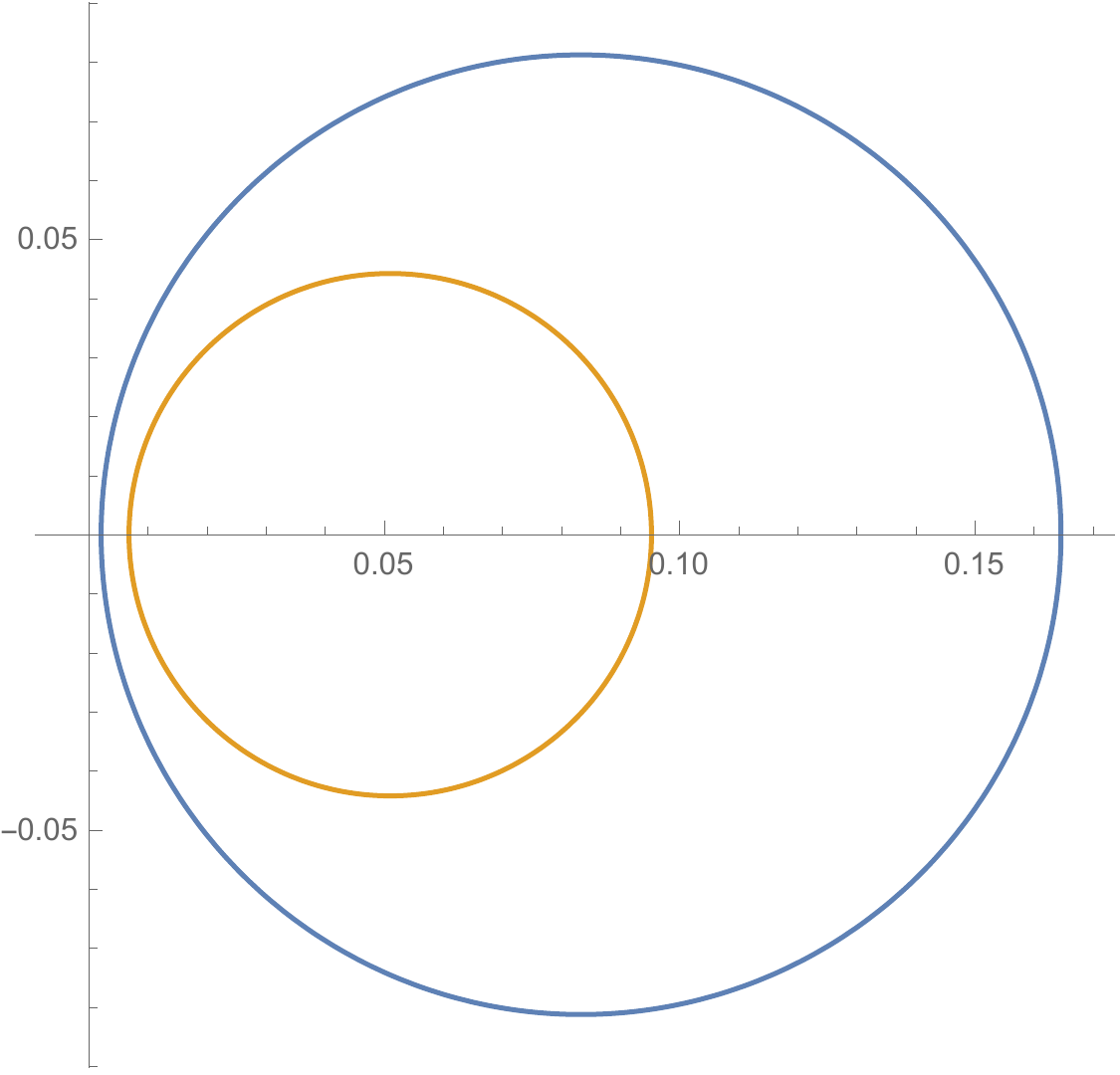}
    \caption{The situation when $\alpha=6$, $m=5$, $i=9$, and $\delta=4$.}
\label{FigInside}
\end{figure}

In this case, it is clear that there exists a $\frac{\pi}{2}<\theta<\pi$
such that
\begin{equation}
\left(\frac{a_{1}}{\alpha+m+1}+\frac{r_{2}\cos\theta}{\alpha+i+1}\right)^{2}-c_{\alpha,\delta}^{2}\left(\frac{a_{1}}{\alpha+n+1}+\frac{r_{2}\cos\theta}{\alpha+j+1}\right)^{2}=0.\label{eq:RealPart}
\end{equation}
Then if $\theta\rightarrow\pi$, the left hand side of \eqref{eq:RealPart}
will converge to a negative real number by condition \eqref{Condition}. At the same time, since 
\[
\lim_{\theta\rightarrow\pi}
\left(
\frac{r_{2}^{2}\sin^{2}\theta}{\left(\alpha+i+1\right)^{2}}-\frac{r_{2}^{2}\sin^{2}\theta}{\left(\alpha+j+1\right)^{2}}
\right)
=0,
\]
there exists some $\theta$ for which \eqref{eq:Two-term bound} fails.
Define 
\[
\theta_{\alpha}:=\inf\left\{ \theta:\text{ equation }\eqref{eq:Two-term bound}\;\mathrm{fails}\right\} .
\]

 We will now show that $\theta_{\alpha}\rightarrow\frac{\pi}{2}$
as $\alpha\rightarrow\infty$. Define 
\[
F_{\alpha}(\theta):=\left(\frac{a_{1}}{\alpha+m+1}+\frac{r_{2}\cos\theta}{\alpha+i+1}\right)^{2}+\frac{r_{2}^{2}\sin^{2}\theta}{\left(\alpha+i+1\right)^{2}}-c_{\alpha,\delta}^{2}\left[\left(\frac{a_{1}}{\alpha+n+1}+\frac{r_{2}\cos\theta}{\alpha+j+1}\right)^{2}+\frac{r_{2}^{2}\sin^{2}\theta}{\left(\alpha+j+1\right)^{2}}\right].
\]
As shown above, there exists a $\theta$ such that $F_{\alpha}(\theta)=0$.
It must be the case that $\theta = \theta_{\alpha}$ is a root, since
$F_{\alpha}(\theta)>0$ for $\theta<\theta_{\alpha}$,
and since $F_{\alpha}(\theta)<0$ for $\theta>\theta_{\alpha}.$
Solving for this $\theta_{\alpha}$, we find that 
\begin{align*}
\theta_{\alpha}= & \arccos\left(\frac{c_{\alpha,\delta}^{2}\left[\frac{1}{\left(\alpha+n+1\right)^{2}}+\frac{r_{2}^{2}}{\left(\alpha+j+1\right)^{2}}\right]-\left[\frac{1}{\left(\alpha+m+1\right)^{2}}+\frac{r_{2}^{2}}{\left(\alpha+i+1\right)^{2}}\right]}{2r_{2}\left(\frac{1}{\left(\alpha+m+1\right)\left(\alpha+i+1\right)}-\frac{c_{\alpha,\delta}^{2}}{\left(\alpha+n+1\right)\left(\alpha+j+1\right)}\right)}\right)
=  \arccos\left(\frac{\mathcal{O}\left(\alpha^{5}\right)}{2r_{2}\left(1+r_{2}^{2}\right)\alpha^{7}}\right).
\end{align*}
Since
$$\frac{\mathcal{O}\left(\alpha^{5}\right)}{2r_{2}\left(1+r_{2}^{2}\right)\alpha^{7}}\rightarrow0
\qquad\text{as}\qquad\alpha\rightarrow\infty,
$$
this means that $\theta_{\alpha}\rightarrow\frac{\pi}{2}$ when $\alpha\rightarrow\infty$, 
as claimed. In particular, this shows that for all $\frac{\pi}{2}<\theta\le\pi$,
there exists an $\alpha$ for which $F_{\alpha}(\theta)<0$.
For such $\theta$ then, the Toeplitz operator with the symbol ${a_{1}z^{m}\bar{z}^{n}+r_{2}e^{i\theta}z^{i}\bar{z}^{j}}$
is not hyponormal.
\end{proof}
The next example will demonstrate that the extra conditions we used
for necessity in Theorem \ref{thm:Same Relative Degree Binomial}
cannot be completely dropped.
\begin{example}
\label{exa:CounterExampleQuarterPlane} Let $\varphi_{\theta}(z)=z^{2}\bar{z}+\frac{1}{10}e^{i\theta}z^{3}\bar{z}^{2}.$
Here again, thinking in terms of two circles as in the proof of Theorem
\ref{thm:Same Relative Degree Binomial}, we see that in this case
the interiors of the two circles are disjoint for small $\alpha$ as shown in Figure \ref{FigNotInside}.

\begin{figure}[h]
\includegraphics[scale=0.9]{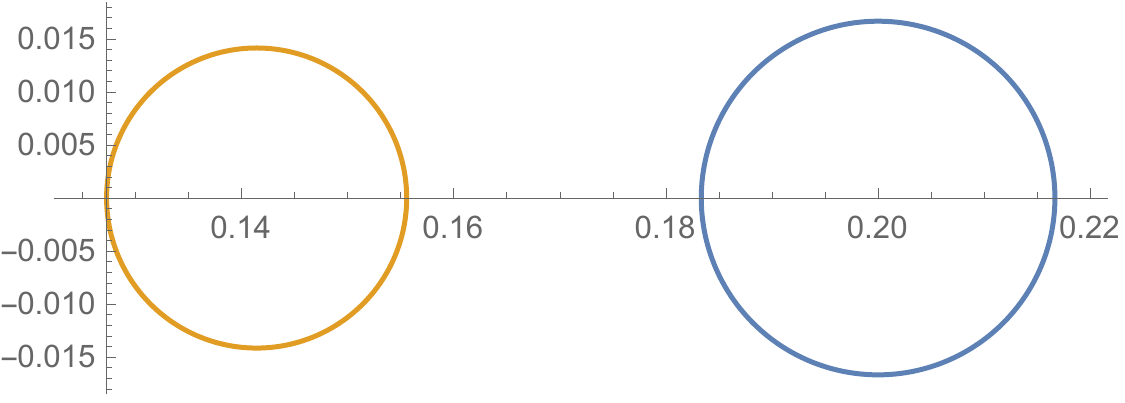}
\caption{ The situation for $\varphi_{\theta}(z)=z^{2}\bar{z}+\frac{1}{10}e^{i\theta}z^{3}\bar{z}^{2}$ with $\alpha=2$.}
\label{FigNotInside}

\end{figure}

And indeed, as $\alpha\rightarrow\infty$, and these two circles come
together, we find that 
\[
\left(\frac{1}{\alpha+3}+\frac{\cos\theta}{10(\alpha+4)}\right)^{2}+\frac{\sin^{2}\theta}{100\left(\alpha+4\right)^{2}}-c_{\alpha,\delta}^{2}\left[\left(\frac{1}{\alpha+2}+\frac{\cos\theta}{10(\alpha+3)}\right)^{2}+\frac{\sin^{2}\theta}{100\left(\alpha+3\right)^{2}}\right]>0
\]
for all $\theta\in\left[0,\pi\right]$ and all $\alpha\ge1$. Thus, the Toeplitz 
operator with symbol ${\varphi_{\theta}}$ is hyponormal for all choices of
$\theta$. 
\end{example}
 The next theorem improves slightly on the conditions of
Corollary \ref{thm:Same degree polynomial1}. 
\begin{thm}
\label{thm:Same Degree Polynomial2}Let $\varphi(z)=a_{1}z^{m_{1}}\bar{z}^{n_{1}}+\ldots+a_{k}z^{m_{k}}\bar{z}^{n_{k}}$,
with $m_{1}-n_{1}=\ldots=m_{k}-n_{k}=\delta\geq0$, and $a_{i}$ all
lying in the same quarter-plane $1\leq i\leq k$ (i.e.~$\max_{1\leq i,j\leq k}\left|\mathrm{arg}(a_{i})-\mathrm{arg}\left(a_{j}\right)\right|\leq\frac{\pi}{2}$),
then $T_{\varphi}$ is hyponormal. 
\end{thm}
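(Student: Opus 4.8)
The plan is to reduce the claim to the Mellin-transform criterion of Theorem \ref{thm:Liu-Lu Theorem}, just as in the proof of Theorem \ref{thm:Same Relative Degree Binomial}, and then to exploit the quarter-plane hypothesis through a rotation that sends all coefficients into a single closed quadrant. First I would dispose of the degenerate case $\delta=0$: there $\varphi(re^{i\theta})=\sum_i a_i r^{m_i+n_i}$ is radial, so $T_{\varphi}$ is diagonal in the basis $\{\varphi_n\}$ and hence normal, in particular hyponormal (this is also case 2 of Theorem \ref{thm:Liu-Lu Theorem}). So I assume $\delta>0$. Writing $z=re^{i\theta}$ gives $\varphi(re^{i\theta})=e^{i\delta\theta}\varphi_{0}(r)$ with $\varphi_{0}(r)=\sum_{i=1}^{k}a_{i}r^{m_i+n_i}$, hence $\widehat{\varphi}_{0}(s)=\sum_{i=1}^{k}\frac{a_{i}}{m_i+n_i+s}$. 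Using $m_i=n_i+\delta$, a short computation collapses the two relevant values to $\widehat{\varphi}_{0}(2\alpha+\delta+2)=\frac{1}{2}\sum_i a_i p_i$ and $\widehat{\varphi}_{0}(2\alpha-\delta+2)=\frac{1}{2}\sum_i a_i q_i$, where $p_i:=(\alpha+m_i+1)^{-1}$ and $q_i:=(\alpha+n_i+1)^{-1}$ are positive. By Theorem \ref{thm:Liu-Lu Theorem} it then suffices to establish
\[
\Big|\sum_{i=1}^{k}a_i p_i\Big|\ge c_{\alpha,\delta}\Big|\sum_{i=1}^{k}a_i q_i\Big|\qquad\text{for all }\alpha\ge\delta.
\]
The crucial input is that each monomial $z^{m_i}\bar z^{n_i}$ is individually hyponormal by Theorem \ref{thm:Hyponormal monomials}; applying Theorem \ref{thm:Liu-Lu Theorem} to each one yields the termwise bound $p_i\ge c_{\alpha,\delta}q_i$ for every $i$ and every $\alpha\ge\delta$.

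Next I would reduce to coefficients in the closed first quadrant. Since $[T_{e^{i\beta}\varphi}^{*},T_{e^{i\beta}\varphi}]=[T_{\varphi}^{*},T_{\varphi}]$ for every real $\beta$ (the unimodular scalar cancels in the self-commutator), hyponormality is unaffected by replacing each $a_i$ with $e^{i\beta}a_i$. The hypothesis $\max_{i,j}\lvert\arg(a_i)-\arg(a_j)\rvert\le\frac{\pi}{2}$ says the arguments lie in an arc of length at most $\frac{\pi}{2}$, so a suitable $\beta$ rotates all of them into $[0,\frac{\pi}{2}]$; discarding any zero coefficients, I may thus assume $\mathrm{Re}(a_i)\ge0$ and $\mathrm{Im}(a_i)\ge0$ for every $i$. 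Writing $a_i=b_i+ic_i$ with $b_i,c_i\ge0$, we get $\sum_i a_i p_i=B_p+iC_p$ with $B_p=\sum_i b_i p_i\ge0$ and $C_p=\sum_i c_i p_i\ge0$, and similarly for $q$. The termwise inequality $p_i\ge c_{\alpha,\delta}q_i$ together with $b_i,c_i\ge0$ gives $B_p\ge c_{\alpha,\delta}B_q\ge0$ and $C_p\ge c_{\alpha,\delta}C_q\ge0$, whence $B_p^{2}+C_p^{2}\ge c_{\alpha,\delta}^{2}(B_q^{2}+C_q^{2})$. This is exactly the squared form of the displayed inequality, so Theorem \ref{thm:Liu-Lu Theorem} delivers hyponormality of $T_{\varphi}$.

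The conceptual point is that the quarter-plane hypothesis is precisely what makes the real and imaginary parts of $\widehat{\varphi}_{0}$ \emph{simultaneously} nonnegative positive-combinations of the $a_i$; once this is arranged, the scalar Mellin inequalities coming from the single monomials combine coordinatewise, with no cross terms to fight. I do not anticipate a genuine obstacle. In contrast with the binomial case of Theorem \ref{thm:Same Relative Degree Binomial}, where one coefficient could be rotated to the positive real axis but the second still had to be tracked around a circle, placing \emph{all} coefficients in one quadrant makes the estimate entirely termwise. The only places demanding care are the bookkeeping in the Mellin computation — verifying that $\widehat{\varphi}_{0}(2\alpha\pm\delta+2)$ genuinely reduce to $\frac{1}{2}\sum_i a_i p_i$ and $\frac{1}{2}\sum_i a_i q_i$ — and the remark that rotating into a closed quadrant is legitimate because an arc of length at most $\frac{\pi}{2}$ never wraps around.
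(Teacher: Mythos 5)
Your proof is correct and follows essentially the same route as the paper: reduce to the Mellin-transform criterion of Theorem \ref{thm:Liu-Lu Theorem}, rotate all coefficients into the closed first quadrant (using invariance of the self-commutator under unimodular scaling of the symbol), and then verify the required inequality by comparing real and imaginary parts separately via the termwise bounds $p_i\ge c_{\alpha,\delta}q_i$. The only cosmetic differences are that you treat $\delta=0$ explicitly and obtain the termwise bounds by citing Theorem \ref{thm:Hyponormal monomials} together with Theorem \ref{thm:Liu-Lu Theorem} (as in the proof of Corollary \ref{thm:Same degree polynomial1}), whereas the paper verifies them directly as in Theorem \ref{thm:Same Relative Degree Binomial}.
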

\begin{proof}
The proof follows \emph{mutatis mutandis }the proof of Theorem \ref{thm:Same Relative Degree Binomial}.
Recall again that $c_{\alpha,\delta}=\sqrt{\frac{\alpha-\delta+1}{\alpha+\delta+1}}$.

We assume without loss of generality that $a_{1}$ is a positive real
number with largest modulus among the $a_{i}$'s, and with $a_{j}=r_{j}e^{i\theta_{j}}$
for $2\leq j\leq k$. The only other change is that instead of condition \eqref{eq:Two-term bound},
we have hyponormality if and only if 
\begin{align}
 &\quad \left(\frac{a_{1}}{\alpha+m_{1}+1}+\sum_{i=2}^{k}\frac{r_{i}\cos\theta_{i}}{\alpha+m_{i}+1}\right)^{2}+\left(\sum_{i=2}^{k}\frac{r_{i}\sin\theta_{i}}{\alpha+m_{i}+1}\right)^{2}\label{eq:Multiterm bound}\\
&\geq  c_{\alpha,\delta}^{2}\left(\left(\frac{a_{1}}{\alpha+n_{1}+1}+\sum_{i=2}^{k}\frac{r_{i}\cos\theta_{i}}{\alpha+n_{i}+1}\right)^{2}+\left(\sum_{i=2}^{k}\frac{r_{i}\sin\theta_{i}}{\alpha+n_{i}+1}\right)^{2}\right),\nonumber 
\end{align}
for all $\alpha\ge\delta$.

Thus, in addition to needing all $a_{i}$ in the right-half plane
to guarantee 
\[
\left(\frac{a_{1}}{\alpha+m_{1}+1}+\sum_{i=2}^{k}\frac{r_{i}\cos\theta_{i}}{\alpha+m_{i}+1}\right)^{2}\geq c_{\alpha,\delta}^{2}\left(\frac{a_{1}}{\alpha+n_{1}+1}+\sum_{i=2}^{k}\frac{r_{i}\cos\theta_{i}}{\alpha+n_{i}+1}\right)^{2},
\]
for all $\alpha\ge\delta$, we also need all $a_{i}$ in the upper
half-plane to guarantee 
\[
\left(\sum_{i=2}^{k}\frac{r_{i}\sin\theta_{i}}{\alpha+m_{i}+1}\right)^{2}\ge c_{\alpha,\delta}^{2}\left(\sum_{i=2}^{k}\frac{r_{i}\sin\theta_{i}}{\alpha+n_{i}+1}\right)^{2}.
\]
Thus, as long as $0\leq\theta_{i}\le\frac{\pi}{2}$, for all $i$,
$T_{\varphi}$ is hyponormal.

By rotation, it is sufficient to have $\max_{1\leq i,j\leq k}\left|\mathrm{arg}(a_{i})-\mathrm{arg}\left(a_{j}\right)\right|\leq\frac{\pi}{2}$. 
\end{proof}

It is not known whether or not this condition is necessary. It may be possible \emph{a priori}  to construct
$\varphi$ in such a way that condition \eqref{eq:Multiterm bound} holds, while allowing
one of the $a_{i}$ to be outside the given quarter-plane. We expect that the techniques of Example \ref{exa:CounterExampleQuarterPlane}
can be modified to yield the desired outcome.

\section{Some spectral estimates}\label{s-spectral}
We discuss some question of the spectral properties of hyponormal
$T_{\varphi}$. By Putnam's inequality, and the spectral mapping theorem,
we know that when $\varphi$ is analytic we have that
\[
\Vert[T_{\varphi}^{*},T_{\varphi}]\Vert\leq\frac{\mathrm{Area}(\varphi(\mathbb{D}))}{\pi}.
\]
However, in \cite{OlsenReguera}, it was shown that if $\varphi$ is
also univalent in $\mathbb{D}$, that 
\[
\Vert[T_{\varphi}^{*},T_{\varphi}]\Vert\leq\frac{\mathrm{Area}(\varphi(\mathbb{D}))}{2\pi},
\]
and it is a standing conjecture that the univalent condition can be
dropped. Evidence for this conjecture was given in \cite{FleemanKhavinson}
where it was showed that 
\[
\Vert[T_{z^{k}}^{*},T_{z^{k}}]\Vert=\frac{1}{2}.
\]
An examination of Theorem \ref{thm:Hyponormal monomials} lets us
show the following.
\begin{thm}
\label{PutnamLikeTheorem}
Let $\varphi(z)=z^{m}\bar{z}^{n}$ with $m>n$. Then

\[
\Vert[T_{z^{m}\bar{z}^{n}}^{*},T_{z^{m}\bar{z}^{n}}]\Vert\le\frac{1}{2}.
\]
\end{thm}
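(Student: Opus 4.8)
The plan is to read off a spectral decomposition from the proof of Theorem~\ref{thm:Hyponormal monomials}. Since $T_{\varphi}z^{k}$ is always a scalar multiple of a single monomial, the self-commutator $[T_{\varphi}^{*},T_{\varphi}]$ is diagonal in the orthonormal basis $\{\varphi_{k}\}$; it is self-adjoint, and by Theorem~\ref{thm:Hyponormal monomials} it is positive, so its norm is the supremum of its eigenvalues. Reading the coefficients of $|u_{k}|^{2}$ off of \eqref{eq:Monomial Formula 1} and passing from quadratic-form coefficients to eigenvalues in the orthonormal basis (which rescales by $k+1$, the factor $\pi$ from $\|z^{k}\|^{2}$ cancelling against that in $\|u\|^{2}$), the eigenvalue on $z^{k}$ is
\[
\lambda_{k}=(k+1)\frac{m+k-n+1}{(m+k+1)^{2}}\quad\text{for }0\le k<m-n,
\]
\[
\lambda_{k}=(k+1)\left(\frac{m+k-n+1}{(m+k+1)^{2}}-\frac{n+k-m+1}{(n+k+1)^{2}}\right)\quad\text{for }k\ge m-n.
\]
Thus the theorem reduces to the purely numerical claim $\lambda_{k}\le\tfrac12$ for every $k\ge0$.

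For the low range $0\le k<m-n$ I would substitute $a=m+k+1$ and write $\lambda_{k}=\frac{(a-m)(a-n)}{a^{2}}=:h(a)$. Because $h'(x)>0$ for $x>\frac{2mn}{m+n}$ and $\frac{2mn}{m+n}<m<a$, the function $h$ is increasing on the relevant interval $[m+1,2m-n]$, so its maximum over this range occurs at $k=m-n-1$, i.e.\ at $a=2m-n$. This yields $\lambda_{m-n-1}=\frac{2(m-n)^{2}}{(2m-n)^{2}}\le\frac12$, with equality precisely when $n=0$, which recovers the analytic value $\|[T_{z^{m}}^{*},T_{z^{m}}]\|=\frac12$ quoted from \cite{FleemanKhavinson} and so confirms the bound is sharp in that limit.

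The crux is the high range $k\ge m-n$, where I would write $\lambda_{k}=h(m+k+1)-h(n+k+1)$ and clear denominators, reducing $\lambda_{k}\le\tfrac12$ to the polynomial inequality
\[
F(t):=(m+t)^{2}(n+t)^{2}-2(m-n)\,t\,\bigl(m^{2}+n^{2}+(m+n)t\bigr)\ge0,\qquad t=k+1\ge m-n+1.
\]
As a sanity check, when $n=0$ this factors as $F(t)=t(t-m)(t+m)(t+2m)$, which is nonnegative on $t\ge m+1$ and vanishes exactly at the transition value $t=m$. The main obstacle is that for $n>0$ no such factorization exists, $\lambda_{k}$ is \emph{not} monotone in $k$ on this range and may attain its maximum in the interior, and the naive estimate that discards the subtracted term $-h(n+k+1)$ overshoots $\tfrac12$ once $n\ge4$ and $m$ is large; one must retain the full expression. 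I would handle this in two steps. Setting $a=m+t$, $b=n+t$ and using $t\le b$ and $t\le a$ gives $2(m-n)t\bigl(ma+nb\bigr)\le 2(m-n)(m+n)\,ab$, so $F(t)\ge0$ whenever $ab\ge2(m^{2}-n^{2})$; a short check (at the smallest value $t=m-n+1$ the difference $ab-2(m^{2}-n^{2})$ equals $m(3-n)+2n^{2}-n+1$) shows this holds for all admissible $t$ once $n\le3$. For $n\ge4$ and large $m$, where the inequality is genuinely tight, I would instead use the completed-square identity
\[
F(t)=\bigl(ab-(m^{2}-n^{2})\bigr)^{2}+(m-n)\bigl(2mn(a+b)-(m-n)(m+n)^{2}\bigr)
\]
and show the square term dominates the (possibly negative) second term on $t\ge m-n+1$ by a direct polynomial-positivity estimate. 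Combining the two ranges gives $\sup_{k}\lambda_{k}\le\tfrac12$, which is the assertion.
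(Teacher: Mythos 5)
Your spectral reduction to the eigenvalues $\lambda_{k}$ and your treatment of the low range $0\le k\le m-n-1$ are correct and essentially identical to the paper's proof, which likewise establishes monotonicity in $k$ and evaluates at $k=m-n-1$ to get $\frac{2(m-n)^{2}}{(2m-n)^{2}}\le\frac12$. The divergence is in the high range $k\ge m-n$, and there your proposal has a genuine gap at exactly the case you call the crux. Your reduction to $F(t)\ge0$ is correct, the $n\le3$ argument is complete (since $ab$ increases in $t$, positivity at $t=m-n+1$ suffices), and the completed-square identity is correct (it reduces to the checkable identity $t(ma+nb)=(m+n)ab-mn(a+b)$); but for $n\ge4$ you stop at ``show the square term dominates the second term by a direct polynomial-positivity estimate.'' That estimate is the entire content of the hard case, so as written the proof is incomplete. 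It is fillable: $ab-(m^{2}-n^{2})$ is positive and increasing in $t$, while $(m-n)\bigl((m-n)(m+n)^{2}-2mn(a+b)\bigr)$ is decreasing in $t$, so it suffices to check $t=m-n+1$, where the difference of the two sides expands to $(4n+6)m^{3}-(3n^{2}+4n-11)m^{2}+(4n^{2}-8n+6)m-(2n^{3}-3n^{2}+2n-1)$, which is positive for all $m>n$. But none of this appears in your proposal.

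You should also know that the case analysis is avoidable, and that your stated reason for needing it --- that one ``must retain the full expression'' once $n\ge4$ --- rests on a false dichotomy. You are right that discarding $-h(n+k+1)$ outright fails, but it fails for \emph{every} $m>n$ once $k$ is large (since $h(m+k+1)\to1$), not just for $n\ge4$; the alternative is not to keep the term in full but to weaken it. The paper keeps the subtracted term and merely enlarges its denominator: for $k\ge m-n$ the numerator $k+n-m+1$ is nonnegative, so
\[
\frac{k+m-n+1}{(k+m+1)^{2}}-\frac{k+n-m+1}{(k+n+1)^{2}}\le\frac{(k+m-n+1)-(k+n-m+1)}{(k+m+1)^{2}}=\frac{2(m-n)}{(k+m+1)^{2}},
\]
and then $(k+m+1)^{2}=\bigl((k+1)+m\bigr)^{2}\ge4m(k+1)\ge4(m-n)(k+1)$ gives $\lambda_{k}\le\tfrac12$ uniformly in $n$, with no clearing of denominators and no case split. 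So your route, once the $n\ge4$ estimate is actually supplied, would be a valid but substantially more laborious alternative; as submitted, its decisive step is asserted rather than proved.
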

\begin{proof}
Recall that by Theorem \ref{thm:Hyponormal monomials}, we have that.
$T_{z^{m}\bar{z}^{n}}$ is hyponormal. We will write here $u(z)=\sum_{k=0}^{\infty}u_{k}\varphi_{k}(z)$,
where $\varphi_{k}(z)=\sqrt{\frac{n+1}{\pi}}z^{k}$, so that $\left\{ \varphi_{k}\right\} _{k=0}^{\infty}$
is the standard orthonormal basis of $A^{2}(\mathbb{D})$, and $\left\Vert u\right\Vert _{A^{2}(\mathbb{D})}^{2}=\sum\left|u_{k}\right|^{2}$. We obtain
\begin{align*}
\Vert[T_{z^{m}\bar{z}^{n}}^{*},T_{z^{m}\bar{z}^{n}}]\Vert & =\sup_{
\tiny{
\begin{array}{c}
u\in A^{2}(\mathbb{D}),\\
\small \Vert u\Vert=1
\end{array}
}
}
\left\langle [T_{z^{m}\bar{z}^{n}}^{*},T_{z^{m}\bar{z}^{n}}]u,u\right\rangle \\
 & =\sum_{k=0}^{m-n-1}\frac{\left(k+m-n+1\right)\left(k+1\right)}{(k+m+1)^{2}}\left|u_{k}\right|^{2}
 \\
 &\qquad+\sum_{k=m-n}^{\infty}\left(\frac{k+m-n+1}{(k+m+1)^{2}}-\frac{k+n-m+1}{(k+n+1)^{2}}\right)\left(k+1\right)\left|u_{k}\right|^{2}.
\end{align*}
The goal will be to show
\begin{equation}
\frac{\left(k+m-n+1\right)\left(k+1\right)}{(k+m+1)^{2}}\le\frac{1}{2}\label{eq:SpectralInequality1}
\end{equation}
for all $0\le k\le m-n-1$, and that
\begin{equation}
\left(\frac{k+m-n+1}{(k+m+1)^{2}}-\frac{k+n-m+1}{(k+n+1)^{2}}\right)\left(k+1\right)\le\frac{1}{2},\label{eq:SpectralInequality2}
\end{equation}
for all $k\ge m-n.$ If we accomplish these two estimates \eqref{eq:SpectralInequality1} and \eqref{eq:SpectralInequality2}, then we will have
\[
\sup_{
\tiny{
\begin{array}{c}
u\in A^{2}(\mathbb{D}),\\
\small \Vert u\Vert=1
\end{array}
}
}
\left\langle [T_{z^{m}\bar{z}^{n}}^{*},T_{z^{m}\bar{z}^{n}}]u,u\right\rangle \le\frac{1}{2}\sum_{k=0}^{\infty}\left|u_{k}\right|^{2},
\]
and the result will be proved. 

To this end, we look first at the case where $0\le k\le m-n-1$. Differentiating
$\frac{\left(k+m-n+1\right)\left(k+1\right)}{(k+m+1)^{2}}$ with respect
to $k$, we find this derivative is
\[
\frac{\left(k+m+1\right)^{2}\left(2k+2+m-n\right)-2\left(k+m+1\right)\left(k+m-n+1\right)\left(k+1\right)}{\left(k+m+1\right)^{4}}.
\]
Since $\left(k+m+1\right)^{2}\ge\left(k+m+1\right)\left(k+m-n+1\right)$,
and $\left(2k+2+m-n\right)\ge2\left(k+1\right)$, we have that this
derivative is positive for all $k\ge0$. Thus it suffices to show
that \eqref{eq:SpectralInequality1} holds when $k=m-n-1$. In this
case we have 
\[
\frac{2\left(m-n\right)^{2}}{\left(2m-n\right)^{2}}.
\]
But then since $\left(2m-n\right)^{2}=\left(m+m-n\right)^{2}=m^{2}+2m\left(m-n\right)+\left(m-n\right)^{2}$
is clearly greater than $4\left(m-n\right)^{2}$, we have that \eqref{eq:SpectralInequality1}
is bounded above by 
\[
\frac{2\left(m-n\right)^{2}}{4\left(m-n\right)^{2}}=\frac{1}{2}
\]
as desired. It remains to show that \eqref{eq:SpectralInequality2}
holds for $k\ge m-n$. To show this we note that 
\[
\left(\frac{k+m-n+1}{(k+m+1)^{2}}-\frac{k+n-m+1}{(k+n+1)^{2}}\right)\left(k+1\right)\le\frac{2\left(m-n\right)\left(k+1\right)}{\left(k+m+1\right)^{2}},
\]
since $\frac{k+n-m+1}{(k+n+1)^{2}}\ge\frac{k+n-m+1}{(k+m+1)^{2}}$.
Now, $(k+m+1)^{2}=\left(k+1\right)^{2}+2m\left(k+1\right)+m^{2}$.
It is clear that $2m\left(k+1\right)\ge2\left(m-n\right)\left(k+1\right)$.
If we can then show that 
\[
\left(k+1\right)^{2}+m^{2}\ge2\left(m-n\right)\left(k+1\right),
\]
the claim will follow. But indeed, we obtain 
\[
\left(k+1-m\right)^{2}=\left(k+1\right)^{2}-2m\left(k+1\right)+m^{2}\ge0
\]
 for all $k$, so we see
\[
\left(k+1\right)^{2}+m^{2}\ge2m\left(k+1\right)\ge2\left(m-n\right)\left(k+1\right).
\]
And it follows then that 
\[
\frac{2\left(m-n\right)\left(k+1\right)}{\left(k+m+1\right)^{2}}\le\frac{2\left(m-n\right)\left(k+1\right)}{4\left(m-n\right)\left(k+1\right)}=\frac{1}{2},
\]
and so, as claimed, the theorem is proved.
\end{proof}
The result is, perhaps, not so surprising, given the results of \cite{FleemanKhavinson}
and \cite{OlsenReguera}, but it leads us to conjecture:
\begin{center}
{\em 
If $\varphi\in L^{\infty}(\mathbb{D})$,
and if $T_{\varphi}$ is hyponormal, then 
$
\Vert[T_{\varphi}^{*},T_{\varphi}]\Vert\leq\frac{\mathrm{Area}(\varphi(\mathbb{D}))}{2\pi}.
$
}
\end{center}

\section{Final remarks}

Our studies have focused on finding sufficient conditions for the
hyponormality of Toeplitz operators having certain non-harmonic polynomials
as symbols, with our methods invariably focusing on what can only
be described as ``hard'' analysis. We would be interested in finding
more function theoretic results akin to P.~Ahern and Z.~\v{C}u\v{c}kovi\'{c}
in \cite{AhernCuckovic}, which would generate softer proofs and more
qualitative results. For example, something along the lines of the following conjecture:
\begin{center}
{\em 
If
$T_{f}$ is hyponormal and $T_{g}$ is co-hyponormal, then $T_{f+g}$
is hyponormal implies that $\left|f_{z}\right|\ge\left|g_{\bar{z}}\right|$
in $\mathbb{D}$.
}
\end{center}

So far, all the examples we have conform to this prediction, but given the subtlety of hyponormality, this evidence is certainly not overwhelming.

We would also be interested in looking at necessary conditions, along
the lines of much of the work that has been done by others studying
operators with harmonic symbols such as Z.~\v{C}u\v{c}kovi\'{c} and R.~Curto's recent work in \cite{CuckovicCurto}.

Finally, in \cite{ChuKhavinson}, Ch.~Chu and D.~Khavinson proved the following theorem for hyponormal Toeplitz operators acting on the Hardy space.

\begin{thm*}
If $\varphi=f+\overline{T_{\overline{h}}f}$ for $f,h\in H^{\infty}$,
with $\left\Vert h\right\Vert _{\infty}\le1$ and $h\left(0\right)=0$,
that is, if $T_{\varphi}$ is a hyponormal Toeplitz operator on the Hardy space
$H^{2}$, then we have that 
\[
\Vert[T_{\varphi}^{*},T_{\varphi}]\Vert\ge\left\Vert P_{+}\left(\varphi\right)-\varphi\left(0\right)\right\Vert _{2}^{2}.
\]
where $P_{+}$ is the orthogonal projection from $L^{2}(\mathbb{T})$ onto the Hardy space.
\end{thm*}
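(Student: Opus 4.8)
The plan is to translate the statement into the language of Hankel operators, where the self-commutator of a harmonic-symbol Toeplitz operator on $H^2$ takes a transparent form, and then to extract the lower bound by testing against a single well-chosen vector. First I would record the reduction of the right-hand side: since $\varphi=f+\bar g$ with $g=T_{\bar h}f\in H^2$, the co-analytic part $\bar g$ contributes only its constant term under $P_+$, so that $P_+(\bar g)=\overline{g(0)}$ and $\varphi(0)=f(0)+\overline{g(0)}$. Hence $P_+(\varphi)-\varphi(0)=f-f(0)$, and the target becomes $\|f-f(0)\|_2^2$.

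Next I would invoke the classical identity $[T_\varphi^*,T_\varphi]=H_{\bar f}^*H_{\bar f}-H_{\bar g}^*H_{\bar g}$, valid for harmonic $\varphi=f+\bar g$, where $H_\psi:=(I-P_+)M_\psi|_{H^2}$ is the Hankel operator and the analytic factors drop out because $H_u=0$ for $u\in H^\infty$. The heart of the argument is the factorization $H_{\bar g}=X H_{\bar f}$, where $X:=P_- M_h|_{(H^2)^\perp}$ and $P_-:=I-P_+$. I would establish it by conjugating $g=P_+(\bar h f)$ to write $h\bar f=\bar g+\rho$ with $\rho\in zH^2$, whence $H_{\bar g}u=P_-(h\bar f u)=P_-(h\,H_{\bar f}u)$. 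Since $\|h\|_\infty\le1$, the operator $X$ is a contraction, so $[T_\varphi^*,T_\varphi]=H_{\bar f}^*(I-X^*X)H_{\bar f}$ is manifestly positive, recovering hyponormality.

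The key observation, and the point where the hypothesis $h(0)=0$ enters decisively, is that $X z^{-1}=P_-(h z^{-1})=0$, because $h z^{-1}\in H^2$. Thus $z^{-1}$ is a unit eigenvector of the positive contraction $B:=I-X^*X$ with eigenvalue $1$. For any positive self-adjoint $B$ and unit eigenvector $e$ with $Be=e$, a one-line computation (decompose $v=\langle v,e\rangle e+v'$ with $v'\perp e$, and use $B\ge0$ together with self-adjointness to annihilate the cross terms) yields $\langle Bv,v\rangle\ge|\langle v,e\rangle|^2$. Applying this with $v=H_{\bar f}u$ gives $\langle[T_\varphi^*,T_\varphi]u,u\rangle\ge|\langle u,\psi\rangle|^2$, where $\psi:=H_{\bar f}^*z^{-1}=P_+(f z^{-1})=(f-f(0))/z$.

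Finally, testing against the unit vector $u=\psi/\|\psi\|$ produces $\|[T_\varphi^*,T_\varphi]\|\ge\|\psi\|_2^2=\|f-f(0)\|_2^2$, which is the claim. I expect the main technical obstacle to be the careful verification of the factorization $H_{\bar g}=X H_{\bar f}$ and of the contraction bound $\|X\|\le1$; once these are in place the eigenvector estimate is entirely elementary. As a sanity check, the extremal symbol $h(z)=z$ already forces equality: there $I-X^*X$ reduces to the rank-one projection onto $\mathrm{span}\{z^{-1}\}$ and $[T_\varphi^*,T_\varphi]$ is precisely the rank-one operator $\psi\otimes\psi$ of norm $\|f-f(0)\|_2^2$.
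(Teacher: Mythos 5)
This theorem is not proved in the paper at all: it appears in the final remarks as a quoted result of Chu and Khavinson \cite{ChuKhavinson}, so there is no internal proof to compare your argument against, and I can only assess it on its own terms. On those terms it is correct and complete. The reduction $P_+(\varphi)-\varphi(0)=f-f(0)$, the harmonic-symbol identity $[T_\varphi^*,T_\varphi]=H_{\bar f}^*H_{\bar f}-H_{\bar g}^*H_{\bar g}$, and the factorization $H_{\bar g}=XH_{\bar f}$ with $X=P_-M_h|_{(H^2)^\perp}$ a contraction are all standard and correctly justified; in fact the technical point you flag is easier than you fear, since boundedness of $T_\varphi$ forces $g=T_{\bar h}f\in H^\infty$, hence $\rho=h\bar f-\bar g\in zH^\infty$ and $P_-(\rho u)=0$ for every $u\in H^2$ with no density argument needed. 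The hypothesis $h(0)=0$ enters exactly where it must, giving $X\bar z=P_-(h\bar z)=0$, and your eigenvector lemma for $B=I-X^*X\ge0$ is sound: the cross terms vanish by self-adjointness together with $B\bar z=\bar z$, so $\langle Bv,v\rangle\ge|\langle v,\bar z\rangle|^2$. Combined with $H_{\bar f}^*\bar z=P_+(f\bar z)=(f-f(0))/z=\psi$ and the test vector $\psi/\|\psi\|$ (the case of constant $f$ being trivial since the commutator is positive), this yields the claim. Whether one packages the key step as ``$\bar z$ is a unit eigenvector of the positive operator $I-X^*X$'' or, as in the original argument, writes $h=zh_1$ and checks directly that $\|P_-(hv)\|^2\le\|v\|^2-|\langle v,\bar z\rangle|^2$ for $v\in(H^2)^\perp$, the substance is the same; your phrasing is arguably cleaner, re-derives Cowen's sufficiency for free via $[T_\varphi^*,T_\varphi]=H_{\bar f}^*(I-X^*X)H_{\bar f}\ge0$, and correctly identifies the extremal case $h(z)=z$, where the commutator is the rank-one operator of norm $\|f-f(0)\|_2^2$.
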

Combining this result with Putnam's inequality they arrive immediately
at the following corollary:
\begin{cor*}
If $T_{\varphi}$ is a hyponormal Toeplitz operator acting on the Hardy space
$H^{2}$, then 
\[
\mathrm{Area}\left(\sigma\left(T_{\varphi}\right)\right)\ge\pi\left\Vert P_{+}\left(\varphi\right)-\varphi\left(0\right)\right\Vert _{2}^{2}.
\]
\end{cor*}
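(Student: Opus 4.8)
The plan is to obtain the corollary as an immediate consequence of chaining together two inequalities that are both available: Putnam's inequality, as stated in the introduction, and the preceding theorem of Chu and Khavinson, which we may invoke as given. No new analysis is required; the entire content is a short deduction.

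First I would apply Putnam's inequality to the operator $T_{\varphi}$. Since $T_{\varphi}$ is hyponormal by hypothesis, Putnam's inequality yields
\[
\Vert[T_{\varphi}^{*},T_{\varphi}]\Vert\leq\frac{\mathrm{Area}(\sigma(T_{\varphi}))}{\pi}.
\]
Next I would invoke the preceding theorem, which asserts that for a hyponormal Toeplitz operator on $H^{2}$ one has
\[
\Vert[T_{\varphi}^{*},T_{\varphi}]\Vert\ge\left\Vert P_{+}\left(\varphi\right)-\varphi\left(0\right)\right\Vert _{2}^{2}.
\]
Here I would note that the hypothesis of the corollary, namely that $T_{\varphi}$ is a hyponormal Toeplitz operator on $H^{2}$, is precisely the hypothesis required to apply that theorem, so no additional structural assumptions on $\varphi$ need to be verified.

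Combining the two displayed bounds gives the chain
\[
\left\Vert P_{+}\left(\varphi\right)-\varphi\left(0\right)\right\Vert _{2}^{2}\le\Vert[T_{\varphi}^{*},T_{\varphi}]\Vert\le\frac{\mathrm{Area}(\sigma(T_{\varphi}))}{\pi},
\]
and multiplying through by $\pi$ and rearranging produces exactly
\[
\mathrm{Area}\left(\sigma\left(T_{\varphi}\right)\right)\ge\pi\left\Vert P_{+}\left(\varphi\right)-\varphi\left(0\right)\right\Vert _{2}^{2},
\]
which is the claim. There is no genuine obstacle here, since both inequalities are supplied; the only point requiring care is to confirm that the hyponormality hypothesis simultaneously licenses Putnam's inequality and the norm lower bound of the preceding theorem, after which the result is a one-line composition of the two estimates.
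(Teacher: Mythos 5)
Your proposal is correct and matches the paper's own derivation exactly: the authors state that the corollary follows immediately by combining the Chu--Khavinson lower bound $\Vert[T_{\varphi}^{*},T_{\varphi}]\Vert\ge\left\Vert P_{+}(\varphi)-\varphi(0)\right\Vert _{2}^{2}$ with Putnam's inequality $\Vert[T_{\varphi}^{*},T_{\varphi}]\Vert\le\mathrm{Area}(\sigma(T_{\varphi}))/\pi$, which is precisely your chain of estimates. Nothing is missing.
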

Although a classification of hyponormal Toeplitz operators remains elusive for the Bergman space, it would be interesting to see under what conditions a similar lower bound could be obtained in the Bergman space setting. A cursory examination of the proof of Theorem \ref{PutnamLikeTheorem} combined with Putnam's inequality shows that
\[
\left\Vert P(z^{m}\bar{z}^{n})\right\Vert_{2} ^{2}=\frac{\left(m-n+1\right)\pi}{\left(m+1\right)^{2}}\le\mathrm{Area}\left(\sigma\left(T_{z^{m}\bar{z}^{n}}\right)\right).
\]


%
%
%

\begin{thebibliography}{14}
\bibitem{AhernCuckovic}P.~Ahern, Z.~\v{C}u\v{c}kovi\'{c}, \emph{A
mean value inequality with applications to Bergman space operators},
Pacific J.~Math.~173 (1996), no. 2, 295\textendash 305.

\bibitem{AxlerShapiro}S.~Axler, J.H.~Shapiro, \emph{Putnam's theorem,
Alexander's spectral area estimate, and VMO}, Math.~Ann.~271, 1985,
161\textendash 183.

\bibitem{ChuKhavinson} Ch.~Chu, D.~Khavinson, \emph{A note on the spectral area of Toeplitz operators},
Proc. Amer. Math. Soc., Vol.~144, No.~6 (2016), 2533\textendash 2537.

\bibitem{Cowen}C.~Cowen, \emph{Hyponormal and subnormal Toeplitz
operators}, Surveys of some recent results in operator theory, Vol.~I, 155\textendash 167, Pitman Res.~Notes Math.~Ser., 171, Longman
Sci.~Tech., Harlow, 1988.

\bibitem{CuckovicCurto}Z.~\v{C}u\v{c}kovi\'{c}, R.~Curto, \emph{A
New Necessary Condition for the Hyponormality of Toeplitz Operators
on the Bergman Space}, arXiv:1610.09596, 2016.

\bibitem{DurenSchuster}P.~Duren, A.~Schuster, \emph{Bergman Spaces},
Mathematical Surveys and Monographs v.~100, 2004.

\bibitem{FleemanKhavinson}M.~Fleeman, D.~Khavinson, \emph{Extremal
domains for self-commutators in the Bergman space}, Complex Anal.~Oper.~Theory 9 (2015), no.~1, 99\textendash 111.

\bibitem{Hwang}I.S.~Hwang, \emph{Hyponormal Toeplitz operators on
the Bergman space, }J.~Korean Math.~Soc.~42 (2005), no.~2, 387\textendash 403.

\bibitem{Liu-Lu}Y.~Lu, C.~Liu, \emph{Commutativity and Hyponormality
of Toeplitz operators on the weighted Bergman space, }J.~Korean Math.~Soc.~46 (2009), no.~3, 621\textendash 642.

\bibitem{MartinPutinar}M.~Martin, M.~Putinar, \emph{Lectures on Hyponormal
Operators, }Operator Theory: Advances and Applications, 39, 1989.

\bibitem{OlsenReguera}J.-F.~Olsen, M.~Reguera, \emph{On a sharp
estimate for Hankel operators and Putnam's inequality}, Rev.~Mat.~Iberoam.~32 (2016), no.~2, 495\textendash 510.

\bibitem{Rudin}W.~Rudin, \emph{Functional Analysis}, McGraw-Hill, 1973.

\bibitem{Sadraoui}H.~Sadraoui, \emph{Hyponormality of Toeplitz operators
and Composition operators}, Thesis, Purdue University, 1992. 

\bibitem{Sarason}D.~Sarason, \emph{Generalized Interpolation in $H^{\infty}$,
}Trans.~Amer.~Math.~Soc.~\textbf{127 }(1967) 179\textendash 203.
\end{thebibliography}
\end{document}